\numberwithin{equation}{section}
\newtheorem{Theorem}{Theorem}[section]
\newtheorem{Lemma}[Theorem]{Lemma}
\newtheorem{Proposition}[Theorem]{Proposition}
\newtheorem{Definition}[Theorem]{Definition}
\newtheorem{Corollary}[Theorem]{Corollary}
\newtheorem{Remark}[Theorem]{Remark}
\numberwithin{equation}{section}
\newtheorem{thm}{Theorem}[section]
\theoremstyle{definition}
\numberwithin{equation}{section}
 \def\p{\partial} 
\def \Vh0{\stackrel{\circ}{V}_h}
  \def\f{\frac}  
\def\p{\partial}
\newcommand{\lc}
{\mathrel{\raise2pt\hbox{${\mathop<\limits_{\raise1pt\hbox
{\mbox{$\sim$}}}}$}}}
\newcommand{\gc}
{\mathrel{\raise2pt\hbox{${\mathop>\limits_{\raise1pt\hbox{\mbox{$\sim$}}}}$}}}
\newcommand{\ec}
{\mathrel{\raise2pt\hbox{${\mathop=\limits_{\raise1pt\hbox{\mbox{$\sim$}}}}$}}}
\def\bb{\begin{equation}} \def\ee{\end{equation}}
\def\beqn{\begin{eqnarray}}  \def\eqn{\end{eqnarray}}
\def\beqnx{\begin{eqnarray*}} \def\eqnx{\end{eqnarray*}}
\def\bn{\begin{enumerate}} \def\en{\end{enumerate}}
\def\bd{\begin{description}} \def\ed{\end{description}}
\title[Surface concentration of transmission resonance]{Surface concentration of transmission eigenfunctions }
\author{Yat Tin Chow}
\address{Department of Mathematics, University of California, Riverside, CA, USA}
\email{yattinc@ucr.edu}
\author{Youjun Deng}
\address{School of Mathematics and Statistics, HNP-LAMA, Central South University, Changsha, Hunan, China
}
\email{youjundeng@csu.edu.cn}
\author{Hongyu Liu}
\address{Department of Mathematics, City University of Hong Kong, Kowloon, Hong Kong, China}
\email{hongyu.liuip@gmail.com, hongyliu@cityu.edu.hk}
\author{Mahesh Sunkula}
\address{Department of Mathematics, Purdue University, West Lafayette, USA}
\email{msunkula@purdue.edu}
\begin{document}
\maketitle

\begin{abstract}

The transmission eigenvalue problem is a type of non-elliptic and non-selfadjoint spectral problem that arises in the wave scattering theory when invisibility/transparency occurs. The transmission eigenfunctions are the interior resonant modes inside the scattering medium. We are concerned with the geometric rigidity of the transmission eigenfunctions and show that they concentrate on the boundary surface of the underlying domain in two senses. This substantiates the recent numerical discovery in \cite{CDHLW} on such an intriguing spectral phenomenon of the transmission resonance. Our argument is based on generalized Weyl's law and certain novel ergodic properties of the coupled boundary layer-potential operators which are employed to analyze the generalized transmission eigenfunctions.

\medskip

\noindent{\bf Keywords:}~~transmission eigenfunctions; surface concentration; coupled layer-potential operators; quantum ergodicity; wave scattering; invisibility and transparency

\noindent{\bf 2010 Mathematics Subject Classification:}~~ 58J50, 35P25; 35R30, 78A40

\end{abstract}

\section{Introduction}

We first introduce the time-harmonic acoustic wave scattering, which is the physical origin of the transmission eigenvalue problem in our study and moreover shall be used to motivate our mathematical analysis.

Let $D$ be an open connected and bounded domain in $\mathbb{R}^d$, $d\geq 3$, with a $\mathcal{C}^\infty$ smooth boundary
$\partial D$ and a connected complement $\mathbb{R}^d\backslash\overline{D}$. In the physical setting, $D$ signifies the support of an inhomogeneous medium scatterer located in an otherwise uniformly homogeneous space. The medium parameter is characterised by the refractive index which is normalised to be $1$ in $\mathbb{R}^d\backslash\overline{D}$ and is assumed to be $Q\in\mathbb{R}_+$ and $Q\neq 1$ in $D$. Set $V=(Q^2-1)\chi_D+0\chi_{\mathbb{R}^d\backslash\overline{D}}$, which is referred to as the scattering potential. Let $\psi_0\in\mathcal{C}^\infty(\mathbb{R}^d)$ be an impinging wave field which is an entire solution to $(\Delta+\kappa^2) \psi_0=0$ in $\mathbb{R}^d$, where $\kappa\in\mathbb{R}_+$ signifies the angular frequency of the wave. The impingement of $\psi_0$ on the scattering potential $(D, V)$, or equivalently on the scattering medium $(D, Q)$, leads to the following Helmholtz system for the total wave field $\psi\in H_{loc}^1(\mathbb{R}^d)$:
\begin{eqnarray}
\begin{cases}
\Delta \psi+\kappa^2(1+V) \psi = 0 & \text{ in }\ \mathbb{R}^d\medskip\\
( \partial_r - \mathrm{i} \kappa) (\psi - \psi_0) = \mathcal{O}( r^{- \frac{d+1}{2}})  & \text{ as }\ r \rightarrow \infty,
\end{cases}
\label{transmissionk}
\end{eqnarray}
where $\mathrm{i}:=\sqrt{-1}$ and $r:=|x|$ for $x\in\mathbb{R}^d$. The last limit in \eqref{transmissionk} is known as the Sommerfeld radiation condition which holds uniformly in the angular variable $\hat x:=x/|x|\in\mathbb{S}^{d-1}$ and characterises the outgoing nature of the scattered $\psi^s:=\psi-\psi_0$. The well-posedness of the scattering system \eqref{transmissionk} is known (cf. \cite{LSSZ,McLean}) and in particular it holds that
\begin{equation}\label{eq:f1}
\psi(x)=\psi_0(x)+\frac{e^{\mathrm{i}\kappa r}}{r^{(d-1)/2}} \psi_\infty(\hat x)+\mathcal{O}\left(\frac{1}{r^{(d+1)/2}}\right)\quad\mbox{as}\ r\rightarrow \infty.
\end{equation}
In \eqref{eq:f1}, $\psi_\infty$ is referred to as the far-field pattern which encodes the scattering information of the underlying scatterer under the probing of the incident wave $\psi_0$. An inverse problem of industrial importance is to recover $(D, V)$ by knowledge of $\psi_\infty$. It is clear that the recovery fails if $\psi_\infty\equiv 0$, namely invisibility/transparency occurs. In such a case, one has by the Rellich theorem \cite{CK_book} that $\psi=\psi_0$ in $\mathbb{R}^d\backslash\overline{D}$. Hence, if setting $u=\psi|_{D}$ and $v=\psi_0|_D$, it holds that
\begin{equation}\label{eq:trans1}
\begin{cases}
& \Delta u+\kappa^2 Q^2 u=0\hspace*{1.62cm}\mbox{in}\ \ D,\medskip\\
& \Delta v+\kappa^2 v=0\hspace*{2.15cm}\mbox{in}\ \ D,\medskip\\
& u=v, \quad\partial_\nu u=\partial_\nu v\hspace*{1.2cm}\mbox{on}\ \ \partial D,
\end{cases}
\end{equation}
where and also in what follows $\nu\in\mathbb{S}^{d-1}$ stands for the exterior unit normal to $\partial D$. That is, if invisibility/transparency occurs, the total and incident wave fields fulfil the spectral system \eqref{eq:trans1}, which is referred to as the transmission eigenvalue problem in the literature.

Let us consider the spectral study of the transmission eigenvalue problem \eqref{eq:trans1}. It is clear that $u=v\equiv 0$ are trivial solutions. If there exist nontrivial $u\in L^2(D)$ and $v\in L^2(D)$ such that $u-v\in H_0^2(D)$ and the first two equations in \eqref{eq:trans1} are fulfilled, then $\kappa$ is referred to as a transmission eigenvalue and $u, v$ are the corresponding transmission eigenfunctions. It is emphasised that in this paper, we are mainly concerned with real transmission eigenvalues, namely $\kappa\in\mathbb{R}_+$, though there exist complex transmission eigenvalues. The transmission eigenvalue problem is non-elliptic and non-selfadjoint, and this is partly evidenced by setting $w=u-v$ and verifying that
\begin{equation}\label{eq:r2}
(\Delta+\kappa^2)\big(\Delta+\kappa^2Q^2\big) w=0\quad\mbox{in}\ H_0^2(D),
\end{equation}
which is a fourth-order PDE eigenvalue problem and quadratic in $\lambda=\kappa^2$. The following connection of the transmission eigenfunctions with the scattering problem \eqref{transmissionk}--\eqref{eq:f1} shall be a useful observation for our subsequent study.
\begin{thm}[Proposition~4.2, \cite{BL2017b}]\label{thm:p1}
Suppose that $\kappa\in\mathbb{R}_+$ is a transmission eigenvalue and $u, v\in L^2(D)$ are the associated transmission eigenfunctions to \eqref{eq:trans1}. Then {\color{black} for any sufficiently small $\varepsilon>0$ (i.e. there exists $\varepsilon_0 > 0$ such that for all $\varepsilon < \varepsilon_0$,)} there exists $g_\varepsilon\in L^2(\mathbb{S}^{d-1})$ such that
\begin{equation}\label{eq:h1}
\|v_{g_\varepsilon}-v\|_{L^2(D)}<\varepsilon,\quad v_{g_{\varepsilon}}(x):=\int_{\mathbb{S}^{d-1}} \exp(\mathrm{i}\kappa x\cdot\theta) g_\varepsilon(\theta)\, ds(\theta),\ \ \forall x\in D. 
\end{equation}
Moreover, if taking $\psi_0=v_{g_\varepsilon}$ in \eqref{transmissionk}, one has $\|\psi_\infty\|_{L^2(\mathbb{S}^{d-1})}\leq C_{V, \kappa}\varepsilon$ and $\|u-\psi\|_{L^2(D)}\leq C_{V,\kappa}\varepsilon$, where $C_{V, \kappa}$ is a positive constant depending only on $V$ and $\kappa$.
\end{thm}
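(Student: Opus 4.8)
The plan is to recover the argument of \cite{BL2017b}: approximate $v$ by Herglotz waves using a classical density property, feed such a Herglotz wave as the incident field into \eqref{transmissionk}, and control the resulting total field and far-field pattern by comparison with the exact solution furnished by the transmission eigenpair $(u,v)$ through the Lippmann--Schwinger formulation.

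\emph{Step 1 (Herglotz approximation).} Set $\mathcal{H}_\kappa(D):=\{w\in L^2(D):(\Delta+\kappa^2)w=0\text{ in }D\}$, which contains $v$. It suffices to show that $\{v_g:g\in L^2(\mathbb{S}^{d-1})\}$ is dense in $\mathcal{H}_\kappa(D)$ in the $L^2(D)$-norm, since then $g_\varepsilon$ can be chosen with $\|v_{g_\varepsilon}-v\|_{L^2(D)}<\varepsilon$. This is a Hahn--Banach argument: if $\phi\in L^2(D)$ is $L^2(D)$-orthogonal to every $v_g$, then by Fubini $\int_D\phi(y)e^{-\mathrm{i}\kappa\hat x\cdot y}\,\mathrm{d}y=0$ for all $\hat x\in\mathbb{S}^{d-1}$. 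The volume potential $W(x):=\int_D\Phi_\kappa(x-y)\phi(y)\,\mathrm{d}y$, with $\Phi_\kappa$ the outgoing fundamental solution of $\Delta+\kappa^2$, then belongs to $H^2_{\mathrm{loc}}(\mathbb{R}^d)$, is radiating, satisfies $(\Delta+\kappa^2)W=-\phi\chi_D$, and has far-field pattern a constant multiple of $\hat x\mapsto\int_D\phi(y)e^{-\mathrm{i}\kappa\hat x\cdot y}\,\mathrm{d}y\equiv0$; by Rellich's lemma $W\equiv0$ in $\mathbb{R}^d\setminus\overline{D}$, so $W\in H_0^2(D)$. Testing $(\Delta+\kappa^2)W=-\phi$ against any $w\in\mathcal{H}_\kappa(D)$ — legitimate upon approximating $W$ in $H_0^2(D)$ by $C_c^\infty(D)$ functions, since $\Delta:H_0^2(D)\to L^2(D)$ is bounded and $\Delta w=-\kappa^2 w\in L^2(D)$ — gives $\langle\phi,w\rangle_{L^2(D)}=0$; in particular $\langle\phi,v\rangle_{L^2(D)}=0$, so $v$ lies in the $L^2(D)$-closure of the Herglotz waves.

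\emph{Step 2 (scattering and an exact identity for $(u,v)$).} With $\psi_0=v_{g_\varepsilon}$, the Lippmann--Schwinger equation for \eqref{transmissionk} reads $\psi=\psi_0+\kappa^2\mathcal{V}_\kappa[V\psi]$ in $\mathbb{R}^d$, where $\mathcal{V}_\kappa[f](x):=\int_D\Phi_\kappa(x-y)f(y)\,\mathrm{d}y$; restricting to $D$, $(I-\kappa^2\mathcal{V}_\kappa V)\psi|_D=\psi_0|_D$ in $L^2(D)$, where $\mathcal{V}_\kappa V$ abbreviates $f\mapsto\mathcal{V}_\kappa[Vf]$. This operator is compact on $L^2(D)$ (weakly singular kernel) and $I-\kappa^2\mathcal{V}_\kappa V$ is injective — a kernel element would extend to an outgoing free-space solution of $\Delta\psi+\kappa^2(1+V)\psi=0$ with zero incident field, hence vanish by the well-posedness of \eqref{transmissionk} — so it is boundedly invertible; set $C_{V,\kappa}:=\|(I-\kappa^2\mathcal{V}_\kappa V)^{-1}\|_{L^2(D)\to L^2(D)}$. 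The crucial observation is that $(u,v)$ is precisely the ``$\varepsilon=0$'' solution. Since $u-v\in H_0^2(D)$, its extension by zero $w_0$ lies in $H^2(\mathbb{R}^d)$, and the two interior equations in \eqref{eq:trans1} give $(\Delta+\kappa^2)w_0=-\kappa^2(Vu)\chi_D$ on all of $\mathbb{R}^d$ with no single- or double-layer term on $\partial D$ (again because $u-v\in H_0^2(D)$). As $S:=\kappa^2\mathcal{V}_\kappa[Vu]$ satisfies the same equation and is radiating while $w_0$ is compactly supported, $w_0-S$ is an entire radiating solution of $(\Delta+\kappa^2)h=0$ in $\mathbb{R}^d$, hence $w_0\equiv S$. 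This yields at once $(I-\kappa^2\mathcal{V}_\kappa V)u=v$ in $D$, i.e.\ $u=(I-\kappa^2\mathcal{V}_\kappa V)^{-1}v$, and $\kappa^2\mathcal{V}_\kappa[Vu]\equiv0$ in $\mathbb{R}^d\setminus\overline{D}$.

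\emph{Step 3 (conclusion, and the main obstacle).} From $\psi|_D=(I-\kappa^2\mathcal{V}_\kappa V)^{-1}v_{g_\varepsilon}$ and $u=(I-\kappa^2\mathcal{V}_\kappa V)^{-1}v$ we get $\|u-\psi\|_{L^2(D)}=\|(I-\kappa^2\mathcal{V}_\kappa V)^{-1}(v-v_{g_\varepsilon})\|_{L^2(D)}\le C_{V,\kappa}\varepsilon$. For the far field, $\psi_\infty(\hat x)=c_d\kappa^2\int_D e^{-\mathrm{i}\kappa\hat x\cdot y}V(y)\psi(y)\,\mathrm{d}y$ is the far-field pattern of $\kappa^2\mathcal{V}_\kappa[V\psi]$, while $\kappa^2\mathcal{V}_\kappa[Vu]\equiv0$ outside $\overline{D}$ forces $c_d\kappa^2\int_D e^{-\mathrm{i}\kappa\hat x\cdot y}V(y)u(y)\,\mathrm{d}y\equiv0$; subtracting and applying the Cauchy--Schwarz inequality, $\|\psi_\infty\|_{L^2(\mathbb{S}^{d-1})}\le c_d\kappa^2|D|^{1/2}\|V\|_{L^\infty}\|\psi|_D-u\|_{L^2(D)}\le C_{V,\kappa}\varepsilon$ after relabelling the constant. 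The step I expect to be most delicate is the exact identity $w_0\equiv\kappa^2\mathcal{V}_\kappa[Vu]$: one must verify carefully that the zero-extension of $u-v$ generates no boundary layer when $\Delta+\kappa^2$ is applied — which is exactly where the regularity hypothesis $u-v\in H_0^2(D)$ is indispensable — and then invoke the uniqueness of entire radiating solutions (a corollary of Rellich's lemma). The Herglotz density of Step 1 and the bounded invertibility of $I-\kappa^2\mathcal{V}_\kappa V$ are classical, the former again resting on a Rellich-type uniqueness statement.
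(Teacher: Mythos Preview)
The paper does not itself prove this theorem: it is stated as a quotation from \cite{BL2017b} and used only as motivation, with no argument given in the present text. So there is no ``paper's own proof'' to compare against; your proposal must be judged on its own merits.

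Your argument is correct and is precisely the classical route. Step~1 is the standard Hahn--Banach/Rellich proof that Herglotz waves are $L^2$-dense among interior Helmholtz solutions. In Step~2 the identity $w_0=\kappa^2\mathcal{V}_\kappa[Vu]$ is justified exactly as you say: $u-v\in H_0^2(D)$ makes the zero extension $H^2(\mathbb{R}^d)$ so that no layer terms appear when applying $\Delta+\kappa^2$, and $w_0-S$ is an entire radiating Helmholtz solution (the compactly supported $w_0$ trivially satisfies Sommerfeld), hence vanishes by Rellich. Step~3 then follows by linearity and the bounded invertibility of $I-\kappa^2\mathcal{V}_\kappa V$. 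The only cosmetic point is that your constant $C_{V,\kappa}$ absorbs $c_d\kappa^2|D|^{1/2}\|V\|_{L^\infty}$ as well as the Lippmann--Schwinger inverse norm, which matches the statement.
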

\noindent In the physical setting, $v_{g_\varepsilon}$ is referred to as a Herglotz wave, and Theorem~\ref{thm:p1} states that if $u, v$ are transmission eigenfunctions, they respectively correspond to the total and incident wave fields (restricted in $D$) from a nearly invisible/transparent scattering scenario.

{\color{black}The transmission eigenvalue problem was first investigated by Kirsch in 1986 \cite{Kir} and soon after it was more systematically studied by Colton and Monk in 1988 \cite{CM}. The exclusion of the transmission eigenvalues can guarantee the injectivity and dense range of the far-field operator and hence the validity of a certain reconstruction scheme for the inverse scattering problem mentioned earlier. In recent years, the study of transmission eigenvalue problems reacquired popularity due to many challenging mathematical questions that it poses and to many possible applications}. The spectral properties of the transmission eigenvalues have been extensively and intensively studied in the literature, and we refer to \cite{CHreview,CCHn, Liureview} for reviews and surveys on the existing developments on this aspect. In particular, we would like to note that generically there exist infinitely many real transmission eigenvalues which accumulate at $\infty$. Recently, several intrinsic geometric properties of the transmission eigenfunctions were discovered and investigated. In \cite{BLLW,BL2018,BL2017b}, it is shown that the transmission eigenfunctions are generically vanishing around corners or high-curvature places of $\partial D$. In \cite{CDHLW}, it is found that ``many" transmission eigenfunctions tend to localize around on $\partial D$ in the sense that the $L^2$-energies of those eigenmodes are concentrated in a neighbourhood of $\partial D$; see Fig.~1 for two typical numerical illustrations, where the transmission eigenfunctions are plotted associated with different $(D, V)$'s. It is highly intriguing to have the following observations:
\begin{enumerate}
\item It is clear that the transmission eigenfunctions are interior resonant modes which exhibit highly oscillatory patterns. Interestingly, the high oscillations of these resonant modes are localized on $\partial D$. The study on the eigenfunction concentration is a central topic in mathematical physics and spectral theory; see e.g. \cite{Zel} and the references cited therein. However, the concentration phenomenon presented here is peculiar and different from the existing ones in the literature for the classical eigenvalue problems. {\color{black} In fact, for a concrete comparison, one may consider the eigenfunctions associated with the classical Dirichlet/Neumann Laplacian, which may exhibit confinement or uniform distribution patterns depending on the underlying geometry, but generally do not always concentrate around the boundary of the underlying domain; see \cite{Wiki,Zel} for a general discussion. Hence, the boundary localization of transmission eigenfunctions represents a new spectral phenomenon.}

\item The physical intuition to explain the surface-localizing behaviour can be described as follows. According to Theorem~\ref{thm:p1}, the transmission eigenfunctions are (at least approximately) restrictions of the incident and total wave fields when invisibility/transparency occurs. Hence, in order to reach the invisibility/transparency, a `smart' way for the propagating wave is to `slide' over the surface of the scattering object, namely $(D,V)$, and return to its original path after passing through the object. This clearly gives rise to the regular pattern depicted in Fig.~1, where the wave fields inside the object clearly propagates along the surface $\partial D$. Applications to invisibility cloaking of the transmission eigenfunctions were discussed in \cite{JL,LWZ}. 

\item The surface-localization indicates that the transmission eigenfunctions carry the geometric information of the underlying scattering medium and hence is a global geometric rigidity property. This spectral property has been proposed for super-resolution wave imaging, generation of the so-called pseudo plasmon modes with a potential bio-sensing application and the artificial electromagnetic mirage effect \cite{CDHLW,DLWW}, as well as stress concentration in elasticity \cite{JLZZ2}. 

\end{enumerate}
\begin{figure}[t]
\centering
\subfigure[$u$-part, $\kappa=5.5496$]{\includegraphics[width=0.3\textwidth]{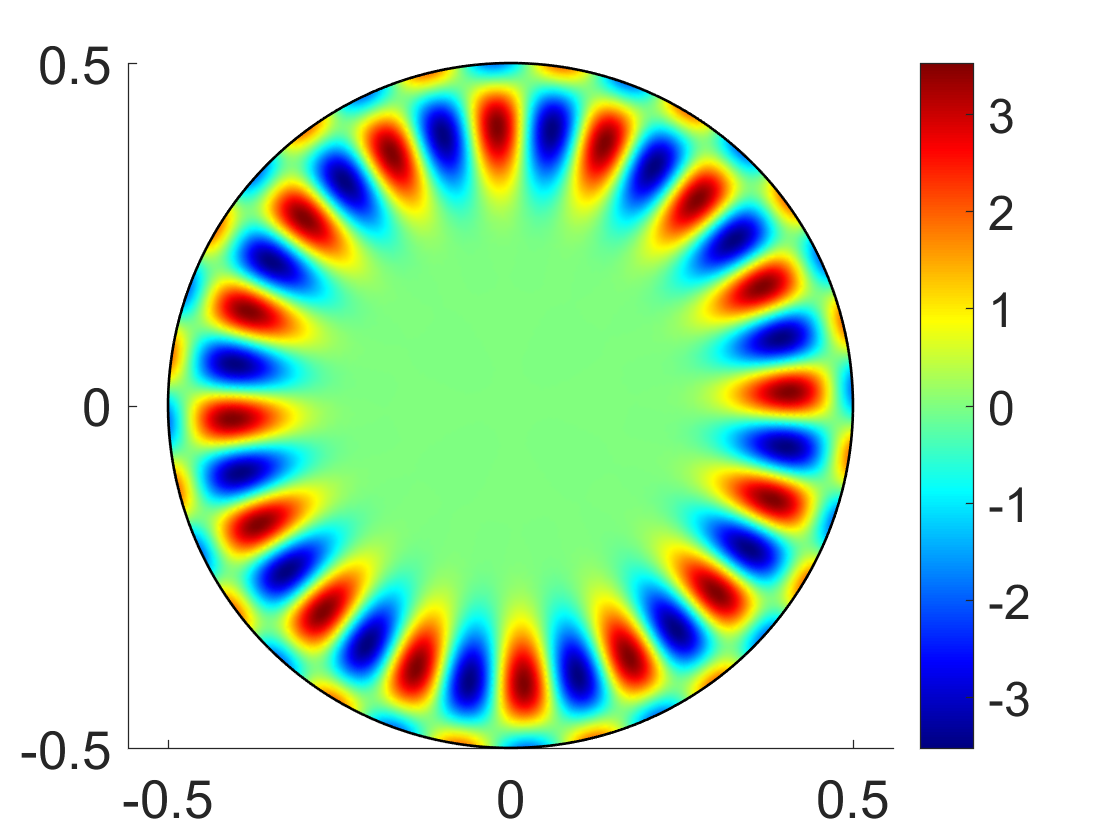}}\qquad\qquad
\subfigure[$v$-part, $\kappa=5.5496$]{\includegraphics[width=0.3\textwidth]{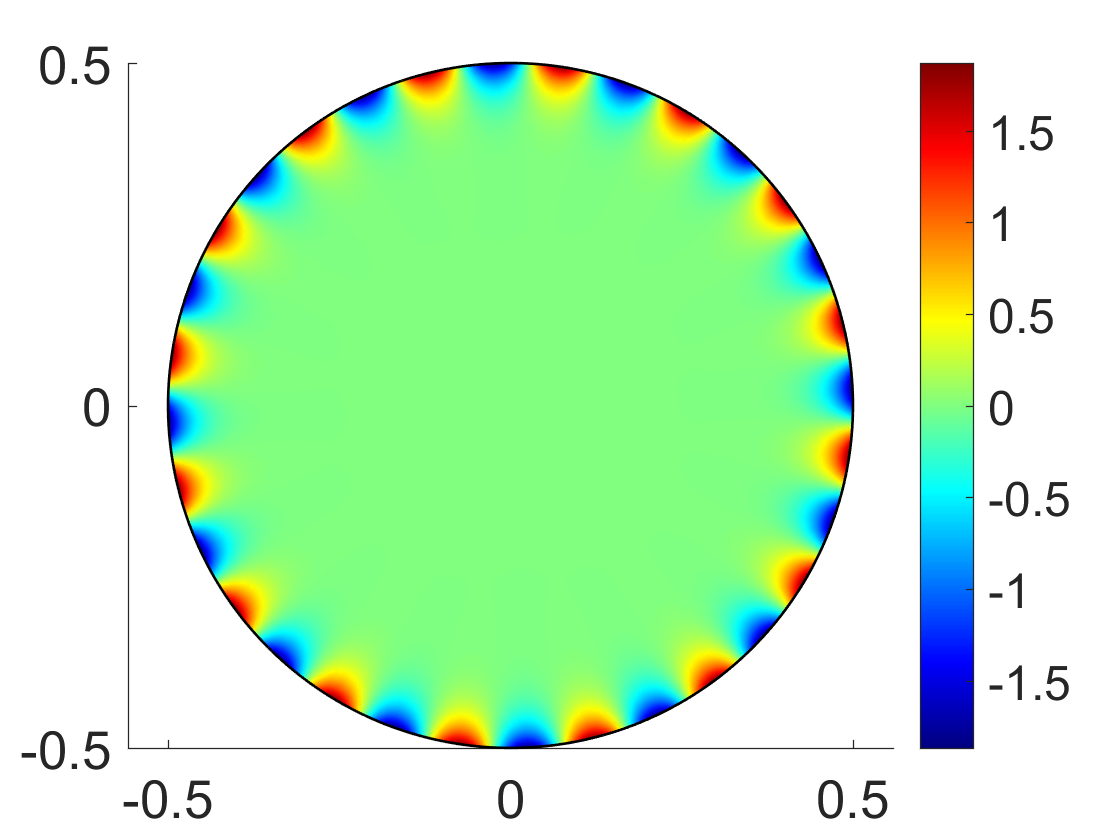}}\\
\subfigure[$v$-part, $\kappa=3.1023$]{\includegraphics[width=0.3\textwidth]{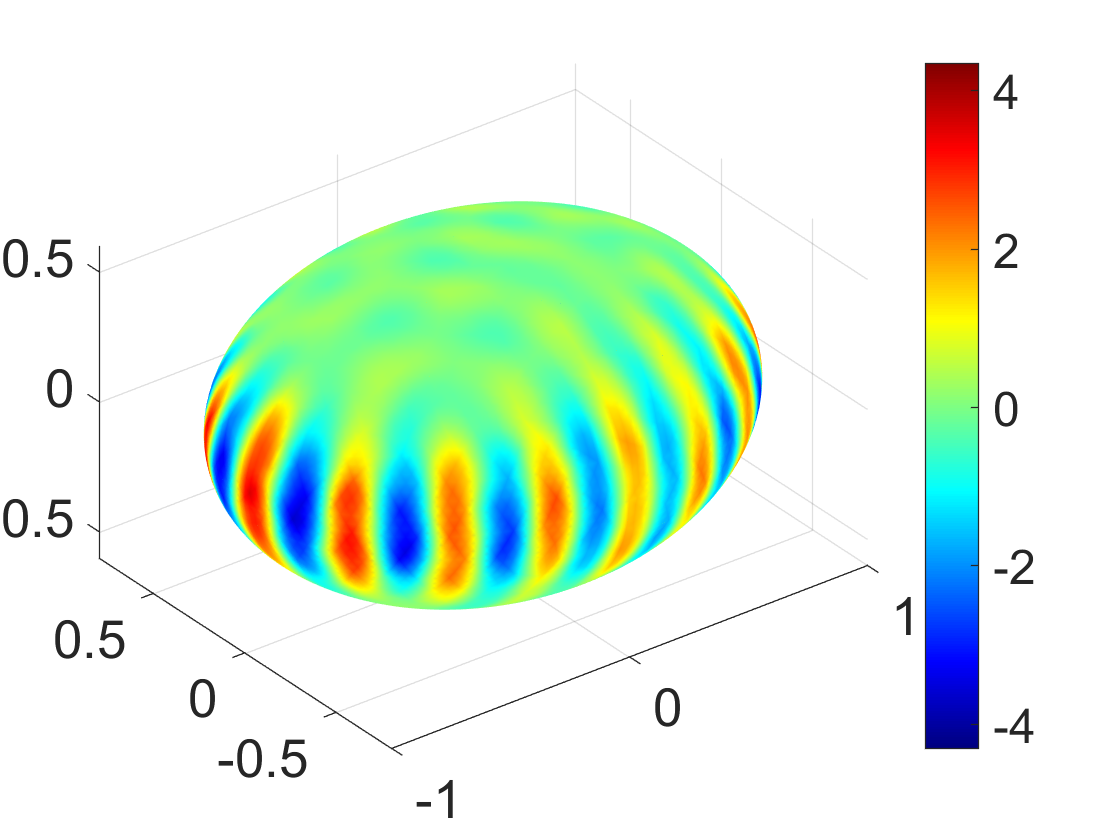}}\qquad\qquad
\subfigure[$v$-part, $\kappa=3.1023$]{\includegraphics[width=0.3\textwidth]{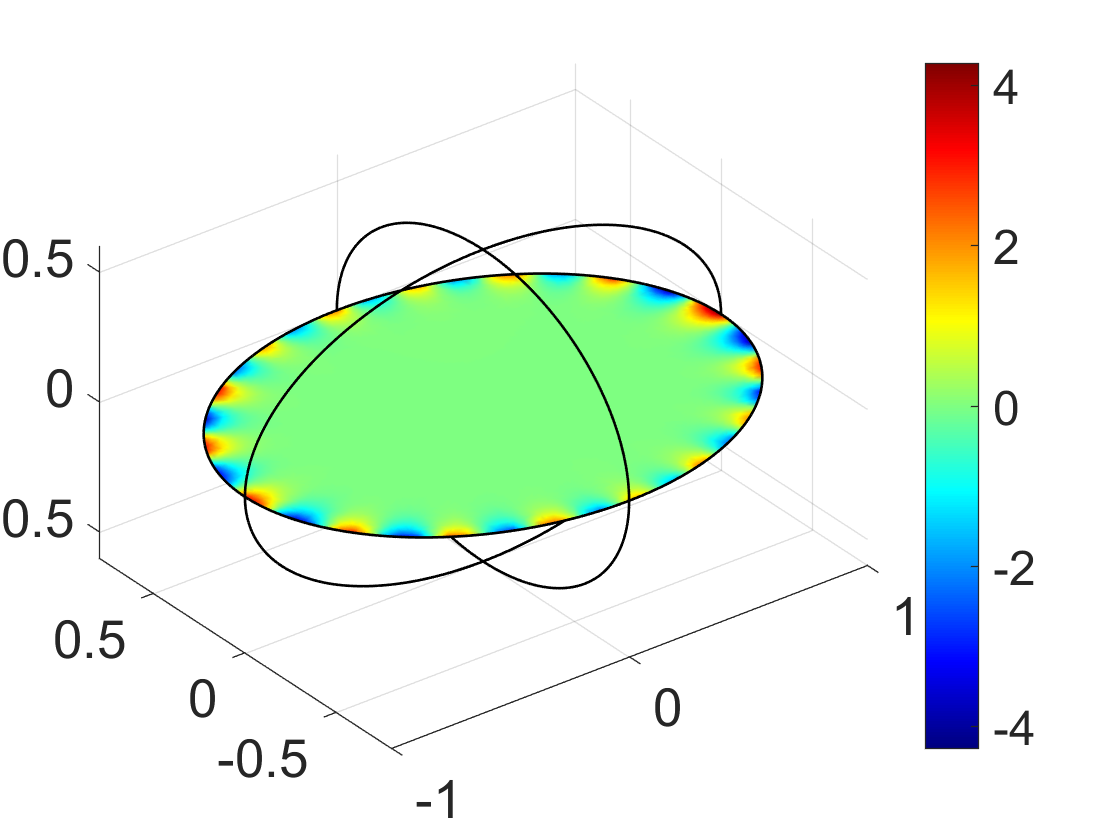}}
\caption{Transmission eigenfunctions to \eqref{eq:trans1} associated with $Q=8$ (or equivalently, $V=63$) for different $D$'s and $\kappa$'s. In Figures (a) and (b), the domain $D$ is a central disk of radius 1. In Figure (c), $D$ is an ellipsoid. Figure (d) is the slice plotting of Figure (c) at $x_3=0$ for $x=(x_j)_{j=1}^3\in\mathbb{R}^3$. }
\end{figure}
However, the discovery in \cite{CDHLW} is mainly based on numerics, though the case within radial geometry is verified rigorously \cite{CDHLW,DJLZ,JLZZ1} by using the analytic expressions of the transmission eigenfunctions via the Bessel and spherical harmonic functions. It is the aim of this paper to derive a theoretical understanding of this peculiar spectral phenomenon. First, we make essential use of the layer-potential operators, especially the so-called Neumann-Poincar\'e operators, which are used to reformulate the transmission eigenvalue problem \eqref{eq:trans1} as a spectral system associated with coupled boundary integral equations. Treating those potential operators as pseudo-differential operators and exploiting their quantitative properties, we introduce a certain generalized transmission eigenvalue problem which approximates the original transmission eigenvalue problem in a certain sense. It is emphasized that the set of generalized transmission eigenfunctions include the transmission eigenfunctions as a subset. Second, we show that the (local) Dirichlet energy of the generalised transmission eigenfunctions is localized around $\partial D$ with quantitative characterisations. Then via quantum ergodicity, we can also establish that the generalised transmission eigenfunctions are quantitatively localized around $\partial D$ almost surely. In establishing those quantitative results, generalized Weyl's law and certain novel ergodic properties of the coupled layer potential operators are explored.

{\color{black} Finally, we would like to make several remarks to highlight the novel contributions as well as the limitations of our work and also point out the potential extensions for future study. Throughout our study, it is assumed that $\partial D$ is $\mathcal{C}^\infty$ smooth and $Q^2=1+V$ is constant. 
However, we would like to emphasise that the numerics in \cite{CDHLW} indicate that the boundary-localizing property hold in more general scenarios with 
Lipscthiz domains and variable $Q$. 
The constant $Q$ is needed for reformulating the transmission eigenvalue problem into a system of coupled boundary integral equation via the layer-potential theory. The $C^\infty$-smooth $\partial D$ is needed for treating the involved boundary integral operators as pseudo-differential operators and applying the operator calculus. 
Though with those limitations mentioned above due to technical requirements, our study represents the first one in the literature on theoretically understanding the peculiar boundary-localizing behaviour of the transmission eigenfunctions. Moreover, even in the current setup, the mathematical analysis presents significant technical difficulties and challenges. Both the results and the mathematical techniques developed enrich the spectral theory of transmission eigenvalue problems and also open up a field of research for many potential developments. First, we believe the theoretical framework developed in this paper can be used to treat similar phenomena for transmission resonances arising in electromagnetic and elastic scattering. Second, relaxing the technical requirements discussed above represents a highly interesting direction of research, though it will present significant challenges. We shall investigate these and other possible developments in our future work. Finally, it is remarked that we mainly consider $d\geq 3$ in the current article, though in principle our study can be extended to cover the case $d=2$. However, the boundary integral operators in two dimensions is of a different type and possess distinct properties compared to the case $d\geq 3$, which require different calculations and analysis. Hence, in order to be focusing and concise in the exposition, we mainly consider the case $d\geq 3$ and choose to present the two-dimensional result somewhere else. }

The rest of the paper is organized as follows. In Section 2, we present the integral reformulation of the transmission eigenvalue problem \eqref{eq:trans1} as well as the quantitative properties of the layer-potential operators as pseudo-differential operators. In Section 3, we present the generalized transmission eigenvalue problem and discuss its properties. Section 4 is devoted to the main results on the surface localization as well as the corresponding proofs.

\section{Integral formulation and layer-potential operators}\label{sect:2}

\subsection{Preliminaries}

From this section and onward, let us only consider $D \subset \mathbb{R}^d$ with $\partial D \in \mathcal{C}^{\infty}$ and {$d \geq 3$}.  We discuss the layer potential formulism  (cf. \cite{CK_book,regularity5,kellog, folland,mcowen}).
For a given $\kappa\in\mathbb{R}_+$, we introduce the single- and double-layer potential operators as follows:
\beqn
    \mathcal{S}^\kappa_{\partial D} [\phi] ({x}) &:=&  \int_{\partial D} G_\kappa({x}-{y}) \phi(y) d \sigma(y) ,\\
  \mathcal{D}^\kappa_{\partial D} [\phi] (x) &:=& \int_{\partial D} \partial_{ \nu_y } G_\kappa(x-y) \phi(y) d \sigma(y) ,\label{eq:nn1}
\eqn
for $x \in \mathbb{R}^d$ and $\phi\in L^2(\partial D)$. Here and also in what follows, $\nu_y$ signifies the exterior unit normal vector at $y\in \partial D$ and $G_\kappa$ is the outgoing fundamental solution of the partial differential operator $\Delta+\kappa^2$ in $\mathbb{R}^d$ given by
\beqn
     G_\kappa (x-y) = \left( C_{d}   \kappa^{ d-2} \mathrm{i} \right)  ( \kappa |x-y| )^{- \frac{d-2}{2}} H^{(1)}_{\frac{d-2}{2}}(\kappa |x-y|),
    \label{fundamental2}
\eqn
where $C_d$ is some dimensional constant and ${H}^{(1)}_{\frac{d-2}{2}}$ is the Hankel function of the first kind and order $(d-2)/2$. {\color{black} In \eqref{eq:nn1}, $\partial_{\nu_y} G_\kappa(x-y):=-\nu_y\cdot\nabla G_\kappa(x-y)$. }
It is known that the single-layer potential $\mathcal{S}^\kappa_{\partial D}$ satisfies the following jump condition on $\partial D$:
\beqn
    \f{\p}{\p \nu} \left(  \mathcal{S}^\kappa_{\partial D} [\phi] \right)^{\pm} = (\pm \f{1}{2} I + {\mathcal{K}^\kappa_{\partial D}}^* )[\phi]\,,
    \label{jump_condition2}
\eqn
where the superscripts $\pm$ indicate the limits from outside and inside $D$ respectively, and
${\mathcal{K}^\kappa_{\partial D}}^*: L^2(\partial D) \rightarrow L^2(\partial D)$ is the Neumann-Poincar\'e operator defined by
\beqn
    {\mathcal{K}^\kappa_{\partial D}}^* [\phi] (x) := \int_{\partial D} \partial_{\nu_x} G_\kappa(x-y) \phi(y) d \sigma(y),\ \ \forall x\in \partial D, 
    \label{operatorK2}
\eqn
{\color{black} with $\partial_{\nu_x}G_\kappa(x-y):=\nu_x\cdot\nabla G_\kappa(x-y)$.} By an abuse of notations, whenever no confusions arise, we denote the restriction of the layer potential operator onto the boundary with the same notations, i.e. we write $ \mathcal{S}^\kappa_{\partial D}: L^2(\partial D) \rightarrow L^2(\partial D)$ and $  \mathcal{D}^\kappa_{\partial D} : L^2(\partial D) \rightarrow L^2(\partial D)$.

With the above preparation, we consider the transmission eigenvalue problem \eqref{eq:trans1} by taking
\begin{equation}\label{eq:tt1}
u = \mathcal{S}^{\kappa Q}_{\partial D} [\phi] \,,\, \quad v = \mathcal{S}^{\kappa}_{\partial D} [\varphi]\   \text{ on } D,
\end{equation}
where $(\phi, {\varphi}) \in L^2 (\partial D) \times L^2 (\partial D)$. Then by using \eqref{jump_condition2}, we can rewrite \eqref{eq:trans1} into the following boundary integral system:
\begin{eqnarray}
\begin{pmatrix}
\mathcal{S}^{\kappa Q}_{\partial D} &  - \mathcal{S}^{\kappa}_{\partial D} \\
 - \f{1}{2} I + {\mathcal{K}^{\kappa Q}_{\partial D}}^* &    - ( - \f{1}{2} I + {\mathcal{K}^{\kappa}_{\partial D}}^* )
\end{pmatrix}
\begin{pmatrix}
\phi \\
\varphi
\end{pmatrix}
=
\begin{pmatrix}
0  \\
0
\end{pmatrix} ,
\label{eq:operator_new}
\end{eqnarray}
or that
\begin{equation}\label{eq:tt2}
\varphi = \left[ \mathcal{S}^{\kappa}_{\partial D}  \right]^{-1}  \mathcal{S}^{\kappa Q}_{\partial D}  [\phi]   \, , \, \quad \left(  ( - \f{1}{2} I + {\mathcal{K}^{\kappa Q}_{\partial D}}^* ) - ( - \f{1}{2} I + {\mathcal{K}^{\kappa}_{\partial D}}^* ) \left[ \mathcal{S}^{\kappa}_{\partial D}   \right]^{-1}  \mathcal{S}^{\kappa Q}_{\partial D} \right)  [\phi]  = 0.
\end{equation}{\color{black} whenever $\kappa^2$ is not a Dirichlet eigenvalue of $D$, where in this case $ \mathcal{S}^{\kappa}_{\partial D} $ is invertable as an operator from $L^2(\partial D)$ to $H^{1} (\partial D)$ \cite{CK_book,AGJKLSW}, see also \cite{McLean} Theorem 7.5.} 
Instead of \eqref{eq:trans1}, we shall consider \eqref{eq:tt1}--\eqref{eq:tt2} in what follows.

We shall treat the layer potential operators in \eqref{eq:tt1}--\eqref{eq:tt2} as pseudo-differential operators in our subsequent analysis. For clarity and self-containedness, we briefly discuss the pseudo-differential operators and refer to \cite{Hor1,Hor2,Hor3,Hor4} for more relevant details. Throughout the rest of the paper, we let $h := \kappa^{-1}$ and $ \Phi \text{SO}^{m}_h$ denote the set of pseudo-differential operators with the action $\mathrm{Op}_{a,h} := \mathcal{F}^{-1}_h \circ \mathfrak{M}_a \circ\mathcal{F}_h$, where $\mathcal{F}_h$ is the semi-classical Fourier transform, $\mathfrak{M}_a$ is the action with multiplication by $a$ and $a$ belongs to the symbol class of order $m$.
{\color{black} For clarity, we first consider $\mathbb{R}^{d-1}$, we have for $f \in \mathcal{S}(\mathbb{R}^{d-1}) $ in the space of Schwartz  functions,
\begin{equation}\label{eq:ff1}
\mathcal{F}_h f(x):=(2\pi h)^{-(d-1)/2}\int_{\mathbb{R}^{d-1}} e^{-\mathrm{i}\frac{x\cdot y}{h}} f(y)\, dy,
\end{equation}
and for $f \in \mathcal{S}'(\mathbb{R}^{d-1}) $ in the space of tempered distribution, 
\begin{equation}\label{eq:ff2}
\langle \mathcal{F}_h f , g \rangle := \langle  f ,  \mathcal{F}_h^* g \rangle =  \langle  f ,  \mathcal{F}_h^{-1} g \rangle \quad \forall g \in \mathcal{S}(\mathbb{R}^{d-1})\,.
\end{equation}
Now for $a \in \mathcal{S}(\mathbb{R}^{2d-2}) $, we also define
\begin{equation}\label{eq:pp1}
\mathrm{Op}_{a,h}f(x):=(2\pi h)^{-(d-1)}\int_{\mathbb{R}^{d-1}}\int_{\mathbb{R}^{d-1}} e^{\mathrm{i}\frac{(x-y)\cdot\xi}{h}}a\left(x,\xi\right) f(y)\, dyd\xi.
\end{equation}
For $a\in \mathcal{S}'(\mathbb{R}^{2d-2})$, $\mathrm{Op}_{a,h} :  \mathcal{S}(\mathbb{R}^{d-1}) \rightarrow  \mathcal{S}'(\mathbb{R}^{d-1})$ defines a continuous operator such that 
\begin{eqnarray}\label{eq:pp2}
\langle \mathrm{Op}_{a,h}f , g \rangle 
& := &   \langle a ,  \mathcal{F}_{2,h} ( g \boxtimes f ) \rangle  \quad  \forall f, g \in \mathcal{S}(\mathbb{R}^{d-1})  
\end{eqnarray}
where we write
\begin{eqnarray*}
( g \boxtimes f )(x,y) &:=& g(x) \overline{ f(y) }  \in \mathcal{S}(\mathbb{R}^{2d-2}) \\
\mathcal{F}_{2,h} \, \phi (x,\xi)  &:=&  (2\pi h)^{-(d-1)} \int_{\mathbb{R}^{d-1}}  e^{- \mathrm{i}\frac{(x-y)\cdot\xi}{h}} \phi(x,y) \, dy  \in \mathcal{S}(\mathbb{R}^{2d-2}) \quad \forall \phi \in \mathcal{S}(\mathbb{R}^{2d-2}) \,.
\end{eqnarray*}
We now notice the map $a \mapsto  \mathrm{Op}_{a,h} $ is also continuous from $\mathcal{S}'(\mathbb{R}^{2d-2})$ to the set of continuous linear operators from $  \mathcal{S}(\mathbb{R}^{d-1}) $  to $  \mathcal{S}'(\mathbb{R}^{d-1}) $.
Moreover, from sequential density of $\mathcal{S}(\mathbb{R}^{2d-2})$ in $\mathcal{S}'(\mathbb{R}^{2d-2}) $ in the weak-$*$ topology, we may always approximate in the weak sense $ \mathrm{Op}_{a,h}$ with $a\in \mathcal{S}'(\mathbb{R}^{2d-2})$ by a sequence of operators $\{ \mathrm{Op}_{a_n,h} \}_{n \in \mathbb{N}}$ with $\{ a_n \}_{n \in \mathbb{N}} \in  \mathcal{S}(\mathbb{R}^{d-1})$ converging to $a \in \mathcal{S}'(\mathbb{R}^{2d-2})$.  In fact, we have
\begin{eqnarray*}
 \langle a_n ,  f \rangle \rightarrow \langle a ,  f \rangle \quad \forall f  \in \mathcal{S}(\mathbb{R}^{2d-2}) \, \Rightarrow \, \langle \mathrm{Op}_{a_n,h}f , g \rangle  \rightarrow \langle \mathrm{Op}_{a,h}f , g \rangle \quad \forall f,g  \in \mathcal{S}(\mathbb{R}^{d-1}) \,.
\end{eqnarray*}
}Next we briefly mention the following notations and definitions of spaces over $\partial D$ that are used in our study on for $Q > 0$:
{\color{black}
\beqnx
\begin{split}
 \bigcup_{i=1}^N U_i = \partial D \,,\quad & F_i : \pi^{-1} (U_i) \rightarrow U_i \times \mathbb{R}^{d-1}\,,  \quad  \sum_{i,j=1}^N \psi_{ij} = 1 \,, \quad { \mathrm{supp}(\psi_{ij}) } \subset U_i\, ; \\
\mathrm{supp}(\psi_{ij}) \bigcap \mathrm{supp}(\psi_{i'j'})&  \neq \emptyset  \Rightarrow \mathrm{supp}(\psi_{ij}) \bigcup \mathrm{supp}(\psi_{i'j'}) \subset U_i \bigcap U_{i'} \forall i,j,i',j' =1,...,N
\end{split}
\eqnx
}{\color{black} where
$\pi: T^*(\partial D) \rightarrow \partial D$ signifies the cotangent bundle of $\partial D$ with the bundle projection $\pi$ from $T^*(\partial D)$ onto the base space $\partial D$. Here, we write $\{ U_i \}_{i=1}^N$ as a locally trivialling (finite) open cover of $\partial D$ with associated local trivialization $F_i : \pi^{-1} (U_i) \rightarrow U_i \times \mathbb{R}^{d-1}$ and $\{ \psi_{ij} \}_{i,j=1}^N$ as a partition of unity subordinate to this cover $\{ U_i \}_{i=1}^N$ with the additional property such that $\mathrm{supp}(\psi_{ij}) \bigcap \mathrm{supp}(\psi_{i'j'}) \neq \emptyset \Rightarrow \mathrm{supp}(\psi_{ij}) \bigcup \mathrm{supp}(\psi_{i'j'}) \subset U_i \bigcap U_{i'} $ for all $ i,j,i',j' =1,...,N$ as specified above.
{\color{black} We then specify the symbol class under consideration
\beqnx
\begin{split}
\widetilde{S}^m (T^*(\partial D) )  :=&  \bigg \{ a: T^*(\partial D)   \rightarrow \mathbb{C} \, ;  a = \sum_{i,j=1}^N \psi_{ij} F_i^* \left(  a_i \right) , \,  a_i \in \widetilde{S}^m \left( U_i \times \mathbb{R}^{d-1}  \right)  \bigg \}; \\
\widetilde{S}^m \left( U_i \times \mathbb{R}^{d-1}  \right)  := & \bigg\{ 
  a: U_i \times \left( \mathbb{R}^{d-1} \backslash \bigcup_{c \in \{1,Q\} } \{|\xi|^2 = c^2 \} \right) \rightarrow \mathbb{C}  \, ; \\
& \quad { \color{black} \exists M(a) \in \mathbb{N}  \text{ such that } } \\
& \quad a (x,\xi) = \sum \limits_{\scriptsize \begin{matrix} l_c,m_c = - M(a)  \end{matrix} }^{M(a)} r_{ l_1 m_1 l_Q m_Q }  (x,\xi) \prod_{c \in \{1,Q\}} \left( \sqrt{c^2 - |\xi|^2 } \right)^{l_c}  \left( \overline{ \sqrt{c^2 - |\xi|^2 } } \right)^{m_c}  , \\
& \quad  \text{with } r_{ l_1 m_1 l_Q m_Q } \in \mathcal{C}^{\infty} (  U_i \times  \mathbb{R}^{d-1}   ) \text{ and }  \\
&\quad  | \partial_\xi^\alpha \partial_x^{\beta} r_{ l_1 m_1 l_Q m_Q } (x,\xi) | \\
& \quad  \quad \leq C_{\alpha, \beta, l_1, m_1, l_Q, m_Q} (1+ |\xi|^2)^{ \frac{ m - |\alpha| \left(1 - \sum_{ c \in \{1,Q\}} ( l_c + m_c )  \right) }{2}  }  \,, \\
&\quad  \forall x \in U_i, \xi \in \mathbb{R}^{d-1}, \alpha,\beta \in \mathbb{N}^{d-1}   \bigg\}\,,
\end{split}
\eqnx
where $z \mapsto z^{\frac{1}{2}} $ is always taken over the branch with angle between $0$ and $\pi$. 
We remark that for any $a \in \widetilde{S}^m \left( U_i \times \mathbb{R}^{d-1}  \right) $, 
we have $a \in \mathcal{C}^{\infty} \left(  U_i \times \left( \mathbb{R}^{d-1} \backslash \bigcup_{c \in \{1,Q\} } \{|\xi|^2 = c^2 \} \right)  \right) $.  Moreover, for any $\delta>0$ and for all $x \in U_i, \xi \in \{ \xi \in \mathbb{R}^{d-1} : | |\xi| - c | \geq \delta, c \in \{1,Q\} \} , \alpha,\beta \in \mathbb{N}^{d-1}$, we have
\beqnx
 | \partial_\xi^\alpha \partial_x^{\beta} a (x,\xi) | \leq C_{\alpha, \beta} \delta^{- 2 M(a) - \beta } (1+ |\xi|^2)^{\frac{m - |\alpha| }{2}} \,.
\eqnx
}We are now ready to write the definition of $\mathrm{Op}_{a,h} $ over a compact $\partial D$ to be such that
{\color{black}
\begin{eqnarray}
\mathrm{Op}_{a,h} f  &:=&  \sum_{i=1}^N \bigg( \sum_{\mathrm{supp}(\psi_{ij}) \bigcap \mathrm{supp}(\psi_{i'j'})\neq \emptyset  }   \psi_{i'j'} F_i^* \left(  \mathrm{Op}_{ (F_i^{-1})^* \left( a |_{T^*(U_i)} \right) ,h} [ (F_i^{-1})^*(\psi_{ij} f ) ] \right) \bigg) \nonumber \\
& & + \int_{\partial D} K_h(\cdot,y) f(y) d \sigma(y) 
\label{eq:pp3}
\end{eqnarray}
{\color{black}
where $K_h \in C^{\infty}(\partial D \times \partial D ) $  with $ |  \partial_x^\alpha \partial^\beta_y K_h | < C_{n \alpha \beta} \, h^{n}  $ for all $ \alpha,\beta \in \mathbb{N}^{d-1},  n  < N(M(a)) $ for some $N(M(a))$ depending on $M(a) \in \mathbb{N}$.} }
}It is noted that $\mathrm{Op}_{a,h}$ is uniquely defined modulus $h \Phi \text{SO}_h^{m-1}$ if $a \in \tilde{\mathcal{S}}^m(T^*(\partial D))$.
Given an operator $\mathscr{A}_h \in \Phi \text{SO}^{m}_h$, we also denote the symbol (under a given coordinate system):
\begin{equation}\label{eq:pp1}
p_{\mathscr{A}_h} (x,\xi) := a(x,\xi)\ \text{ if }\ \mathscr{A}_h = \mathrm{Op}_{a,h}.
\end{equation}
Notice that the principal symbol $p_{\mathscr{A}_h} (x,\xi) (\text{mod} \, h \tilde{\mathcal{S}}^{m-1} (T^*(\partial D)))$ is well-defined and is independent of the choice of the coordinate system. In \eqref{eq:pp1}, we define the operator via the right/Kohn-Nirenberg quantisation. It is remarked that one can also use the left or Weyl quantisation, but this will not affect our subsequent analysis since the principal symbol of the operator is independent of the choice of the quantisation (cf. \cite{ACLS}), and that the left, Weyl and right quantisations differ only by an action of $\exp\left(\pm i \frac{h}{2} \langle \partial_x ,  \partial_\xi \rangle \right)$. We remark that Weyl quantisation helps give a definition of sub-principal symbol, as the sub-principal symbol of Weyl quantistion is well-defined and independent of the choice of coordinate.
For notational sake, we also recall the Hormander symbol class of order $m$, $S^m (T^*(\partial D))$, as
{\color{black}
\beqnx
S^m (T^*(\partial D)) & := & \left \{ a: T^*(\partial D) \rightarrow \mathbb{C} \, ; \, a = \sum_{i,j=1}^N \psi_{ij} F_i^* \left( a_i \right)  \in S^m (U_i \times \mathbb{R}^{d-1} ) \right \}, \\
S^m (U_i \times \mathbb{R}^{d-1} ) & := & \bigg\{ a: U_i \times \mathbb{R}^{d-1} \rightarrow \mathbb{C} \, ; a \in \mathcal{C}^{\infty} (  U_i \times  \mathbb{R}^{d-1}   ) \\
& &\quad | \partial_\xi^\alpha \partial_x^{\beta} a (x,\xi) | \leq C_{\alpha, \beta} (1+ |\xi|^2)^{\frac{m - |\alpha| }{2}} \quad \forall x \in U_i, \xi \in \mathbb{R}^{d-1}, \alpha,\beta \in \mathbb{N}^{d-1}  \bigg\} \,.
\eqnx
We recall that, in this case, we shall further require that $K_h \in C^{\infty}(\partial D \times \partial D ) $ in \eqref{eq:pp3} to be such that { \color{black} $ |  \partial_x^\alpha \partial^\beta_y K_h | < C_{n \alpha \beta} \, h^{n} (1+|x-y|^2)^{-\frac{n}{2}} $ } for all $ \alpha,\beta \in \mathbb{N}^{d-1}, n \in \mathbb{N}$. (c.f. \cite{Zworski} Theorem 9.6)
The readers may also see \cite{Zworski} Chapters 9 and 14 for further details.
}

\subsection{Layer potential operators as $\Phi\mathrm{SO}$'s}

In this subsection, we compute the principal symbols of $\mathcal{S}^{\kappa Q}_{\partial D} $ and $ {\mathcal{K}^{\kappa Q}_{\partial D}}^*$, and also derive some quantitative properties of these operators.

First, we briefly introduce the geometric description of $D\subset\mathbb{R}^d$, and we also refer to \cite{ACL1} for a similar treatment.
Let $\mathbb{X}(x): x\in U\subset\mathbb{R}^{d-1}\mapsto\partial D\subset\mathbb{R}^d$ be a regular parametrization of the surface $\partial D$. We often write the vector $\mathbb{X}_j := \f{\p \mathbb{X}}{\p x_j}$, $j=1,2,\ldots, d-1$. One has that the the normal vector is given as $\nu := \times_{j=1}^{d-1} \mathbb{X}_j / |\times_{j=1}^{d-1} \mathbb{X}_j | $. Let $\bar{\nabla}$ denote the standard covariant derivative on the ambient space $\mathbb{R}^{d}$, and $\Pi$ be the second fundamental form defined on the tangent space $T(\partial D)$. Next, we introduce the following matrix $A_{ij} (x), x \in \partial D $ , defined as
\beqn
A(x) := ( A_{ij}(x) ) =  \langle \textbf{II}_x (\mathbb{X}_i ,\mathbb{X}_j), \nu_x \rangle \notag, \ \ i,j\in\{1,\ldots, d-1\}. 
\eqn
Let $g=(g_{ij})$ be the induced metric tensor and $(g^{ij})=g^{-1}$. We write $H(x), x \in \partial D$ as the mean curvature satisfying
$$ \text{tr}_{g(x)} (A(x)) :=  \sum_{i,j = 1}^{d-1} g^{ij}(x) A_{ij}(x) := (d-1) H (x) \, . $$
We write $\nabla$ as the Levi-Civita connection on $\partial D$. Let $\Gamma_{ij}^k$ and $\bar{\Gamma}_{ij}^k$ be the Christoffel symbols such that
\[
 \nabla_{\mathbb{X}_{i}} \mathbb{X}_{j}  := \sum_{k=1}^{d-1}  \Gamma^k_{ij} \mathbb{X}_{k}, \quad \bar{\nabla}_{e_{i}} e_{j}  := \sum_{k=1}^{d}  \bar{\Gamma}^k_{ij}  e_k\quad \mbox{for\ \ $i,j = 1,..., d-1$},
\]
for the basis $\{ e_i \}_{i=1,...,d-1} = \{ \mathbb{X}_{i} \}_{i=1,...,d-1}$ and $e_d = \nu$. It is noted that for a fixed $x\in \partial D$, if one takes the geodesic normal coordinate in a neighborhood of $x$ to give $z \in \text{Dom}(\exp_x) \subset  T_x( \partial D) \cong \mathbb{R}^{d-1} \mapsto \mathbb{X}(z) : = \exp_x (z) \in  \partial D$, then at the point $x$, we have that $g_{ij}(x) = \boldsymbol{\delta}_{ij}$ and $\Gamma_{ij}^l(x) = 0$ for $i,j,l = 1,...,d-1.$ Here and also in what follows, $\boldsymbol{\delta}$ signifies the Kronecker delta. Moreover, if we choose on the ambient space the semi-geodesic normal coordinate in a neighborhood of $x$, which is given in a neighbourhood of $x \in \partial D$ as $(\alpha,s)$, $\tilde{\mathbb{X}}(\alpha,s) = \exp_{x}(\alpha) + \, s \, \nu(\exp_{x}(\alpha))$, we have $\bar{\Gamma}^l_{ij}(x) = \Gamma_{ij}^l(x) =0 $ for $l\neq d$ and {$\bar{\Gamma}^d_{ij}(x) = A_{ij}(x) $}.
Now, for $y = \exp_x( \delta \, \omega)$ with $\delta\in\mathbb{R}_+$ and $\omega\in\mathbb{S}^{d-1}$,  {\color{black} we have via a direct computation of the corresponding derivatives from definitions up to the respective order, together with the respective orthogonal relations and the aforementioned defining properties of semi-geodesic normal coordinate, that (cf. \cite{DoCarmo} Chapter 6,  \cite{Jost} Chapter 3, \cite{Lee} Chapter 8): }
{\small
\begin{equation}\label{eq:cc3}
\begin{split}
y = &  x +  \delta \omega - \frac{1}{2}   \delta^2 \langle A(x) \omega , \omega \rangle \,  \nu(x) - \frac{1}{6}   \delta^3 \left[ \langle A(x) \omega , \omega \rangle A(x) \omega + \langle \partial_{\omega} A(x) \omega, \omega \rangle v(x) \right]  \,  + \mathcal{O}(\delta^4 )  \,,  \\
\nu(y) =& \nu(x) +  \delta  A(x) \omega + \frac{1}{2} \delta^2 \left[ \partial_{\omega} A(x) \omega  
- |A(x) \omega |^2 \nu(x) \right]    + \mathcal{O}(\delta^3 ) \,,  \\
  g_{ij}(y)   =&  \delta_{ij} - \frac{1}{3} \delta^2 \, \sum_{k,l = 1}^{d-1} { \text{Rm}_{x} }_{iklj} \omega_k \omega_l  + \mathcal{O}(\delta^3) \\
\sqrt{ \det ( g(y)) }   =&  1 - \frac{1}{6} \delta^2 \, \text{Ric}_{x}(\omega,\omega)  + \mathcal{O}(\delta^3)  \\
= & 1 - \frac{1}{6} \delta^2 \, \left[ (d-1) H(x)  \langle A(x) \omega , \omega \rangle - | A(x) \omega |^2   \right]  + \mathcal{O}(\delta^3)  \,,
\end{split}
\end{equation}
}where $\mathrm{Ric}$ and $\mathrm{Rm}$ signify respectively the Ricci and Riemann curvature tensors, {\color{black} and the last equality comes from Gauss-Codazzi Theorem.}

Next, for the fundamental solution $G_\kappa$ introduced in \eqref{fundamental2}, by using the analytic properties of the Hankel function {\color{black} (cf. \cite{AandS} P.358, P.360 and \cite{digital} Section 10.8.   See also \cite{Kor02}) i.e. for $n \in \mathbb{N}$ and $z \in \mathbb{C}$,
\begin{eqnarray*}
J_{\alpha} (z) &=& \sum_{k=0}^\infty \frac{(-1)^k}{k! \Gamma(k + \alpha +1) }  \left(\frac{z}{2} \right)^{2k + \alpha} \text{ for } \alpha = n \text{ or } n + \frac{1}{2} \\
Y_n (z) &=& - \pi^{-1} \left(\frac{z}{2} \right)^{-n} \sum_{k=0}^{n-1} \frac{(n-k-1)!}{k!} \left(\frac{z}{2} \right)^{2k} + \frac{2}{\pi} J_n (z) \log\left( \frac{z}{2}  \right) \\
& & - \pi^{-1} \left(\frac{z}{2} \right)^{n} \sum_{k=0}^{\infty} \frac{ [ \partial_t \log(\Gamma(t)) ]_{t =k+1} + [ \partial_t \log(\Gamma(t)) ]_{t =n+k+1}  }{k! (n+k)!} (-1)^k \left(\frac{z}{2} \right)^{2k} \\
Y_{n + \frac{1}{2}} (z)&=& (- 1)^{n-1} J_{- n - \frac{1}{2}} (z)  \\
H_\alpha^{1}(z) &=& J_\alpha(z) + i Y_\alpha(z) \text{ for } \alpha = n \text{ or } n + \frac{1}{2} 
\end{eqnarray*}
where $\Gamma(\cdot)$ is the Gamma function,} one can have by direct calculations that
{\color{black}
\begin{equation}\label{eq:cc1}
G_\kappa (x-y) =
\begin{cases}
 \kappa^{ d-2}  \bigg[ C_{d}   \kappa^{2-d}  |x-y|^{2-d}  + \tilde{C}_{d}  \kappa^{4 -d} |x-y|^{4 -d} + \tilde{\tilde{C}}_{0,d}  +  \tilde{\tilde{C}}_d \kappa^{6 -d} |x-y|^{6 -d} \\
\hspace*{3cm} +  \mathcal{O}\left( \kappa^{7 -d} |x-y|^{7 -d} \right) \bigg]\  \text{when}\ d > 2, d \neq 3,4,6,  \, ;\\
 \kappa  \bigg[ C_{3}   \kappa^{-1}  |x-y|^{-1} + \tilde{C}_{0,3} + \tilde{C}_{3}  \kappa  |x-y|  +  \tilde{\tilde{C}}_d \kappa^{2} |x-y|^{2} \\
\hspace*{3cm} +  \mathcal{O}\left( \kappa^{3} |x-y|^{3} \right) \bigg]\  \text{when}\ d = 3 \, ;\\
 \kappa^2  \bigg[ C_{4}   \kappa^{-2}  |x-y|^{-2}  + \tilde{C}_{4} \, \log( \kappa |x-y| ) + \tilde{\tilde{C}}_4 \kappa^{2}  |x-y|^{2}\\
\hspace*{3cm} +  \mathcal{O}\left( \kappa^{4} |x-y|^{4} \right) \bigg]\   \text{when}\ d = 4 \, ; \\
\kappa^4  \bigg[ C_{6}   \kappa^{-4}  |x-y|^{-4}  + \tilde{C}_{6} \kappa^{-2}  |x-y|^{-2}  \, + \tilde{\tilde{C}}_6  \log( k |x-y| )\\
\hspace*{3cm}  +  \mathcal{O}\left( \kappa^{2} |x-y|^{2} \right) \bigg]\  \text{when}\ d =6 \, ;
\end{cases}
\end{equation}
}
and when $d>2$:
\begin{equation}\label{eq:cc2}
\begin{split}
 &  \partial_{\nu_x} G_\kappa (x-y) =
 \kappa^{d-2}  \bigg[ C_{d} (2-d)  \,  \kappa^{2-d}  \langle \nu_x, x -y \rangle  |x-y|^{-d}  \\
&  \quad + \tilde{C}_{d} (4-d) \kappa^{4 -d}
 \langle \nu_x, x -y \rangle  |x-y|^{2-d}  +  \mathcal{O}\left( \kappa^{5 -d} |x-y|^{4 -d} \right) \bigg] \,,
\end{split}
\end{equation}
where and also in what follows $C_d, \tilde{C}_{0,d} \tilde{C}_d , \tilde{\tilde{C}}_{0,d}, \tilde{\tilde{C}}_d$ signify some dimensional constants and $\tilde{\tilde{C}}_{0,d} = 0$ when $d \neq 5$

Using \eqref{eq:cc1}, \eqref{eq:cc2} and \eqref{eq:cc3}, together with direct though tedious calculations, 
{\color{black} e.g.
\begin{eqnarray} \label{eq:cc4}
| y - x|^2 &=&  \delta^2  - \frac{1}{12}   \delta^4 \langle A(x) \omega , \omega \rangle ^2 \,  + \mathcal{O}(\delta^5 )  \nonumber \\
| y - x|^l  &=& \left( | y - x|^2 \right)^{\frac{l}{2}}  =  \delta^l  \left( 1 - \frac{l}{24}   \delta^2 \langle A(x) \omega , \omega \rangle ^2 \,  + \mathcal{O}(\delta^3 )   \right) \text{ for } l \neq 0
\end{eqnarray}
}one has when $d>2$:
{\color{black}
\begin{eqnarray}\label{eq:ddd1}
& &G_{\kappa Q}(x-y) \, d \sigma(y)\nonumber\\
&=& (\kappa Q)^{d-2} \bigg[ G_1\left( Q \kappa \delta (0,\omega) \right) + C_{d}   \kappa^{-2} Q^{2-d}  (\kappa\delta)^{4-d} \bigg(  \frac{d-2}{24}      \langle A(x) \omega , \omega \rangle^2 \label{eq:ddd1}\\
& &\quad  -  \frac{1}{6}  \, (d-1) H(x)  \langle A(x) \omega , \omega \rangle + \frac{1}{6} | A(x) \omega |^2 \bigg) + \mathcal{O}( \kappa^{-2} (\kappa \delta)^{5-d} )  \bigg]\,  dy; \nonumber
\end{eqnarray}
where  $G_1(\cdot)$ is $G_\kappa(\cdot)$ with $\kappa=1$, i.e. for small $\delta>0$:
\begin{equation}\label{eq:ddd2}
G_1\left( Q \kappa \delta (0,\omega) \right)  =
\begin{cases}
C_{d}  Q^{2-d}  (\kappa \delta)^{2-d}  + \tilde{C}_{d}  Q^{4 -d} (\kappa \delta)^{4-d}  +  \tilde{\tilde{C}}_{0,d}  + \tilde{\tilde{C}}_d  Q^{6-d} (\kappa \delta)^{6-d} + \mathcal{O}( (\kappa \delta)^{7-d} )  \\
\hspace*{3cm} \  \text{when}\ d > 2, d \neq 3,4,6,  \, ;\\
 C_{3}  Q^{-1}  (\kappa \delta)^{-1}  +  \tilde{C}_{0,3} + \tilde{C}_{3}  Q (\kappa \delta)   + \tilde{\tilde{C}}_3  (\kappa \delta)^{2} + \mathcal{O}( (\kappa \delta)^{3} ) \\
\hspace*{3cm}  \  \text{when}\ d = 3 \, ;\\
 C_{4}  Q^{-2}  (\kappa \delta)^{-2}  + \tilde{C}_{4}    \log( \kappa Q \delta )  +  \tilde{\tilde{C}}_4  Q^2 (\kappa \delta)^2  + \mathcal{O}( (\kappa \delta)^2 ) \\
\hspace*{3cm} \  \text{when}\ d = 4 \, ; \\
C_{6}  Q^{-4}  (\kappa \delta)^{-4}  + \tilde{C}_{6}  Q^{-2} (\kappa \delta)^{-2} + \tilde{\tilde{C}}_6  \log( \kappa Q \delta )  + \mathcal{O}( (\kappa \delta)^4 ) \\
\hspace*{3cm} \  \text{when}\ d =6 \, ;
\end{cases}
\end{equation}
}as well as that:
\begin{eqnarray}
&  & \partial_{\nu_x} G_{\kappa Q}(x-y) \, d \sigma(y)\nonumber \\
&=&
 \frac{ (\kappa Q)^{d-2} }{2} \bigg[ 
\, \big \langle z' \, , \, A(x) \partial_{z'} G_1 \big( (0,z') \big) \big \rangle  \bigg|_{ z' = Q \kappa \delta (0,\omega)} \nonumber \\
&  & +  \frac{1}{3} C_{d}  (2-d)  \kappa^{-1} Q^{2-d}  (\kappa\delta)^{3-d}  \langle \partial_{\omega} A(x) \omega, \omega \rangle     + \mathcal{O}( \kappa^{-1} (\kappa \delta)^{4-d} )  \bigg]\,  dy.   \label{eq:ddd3}
\end{eqnarray}
with $z' \in \mathbb{R}^{d-1}$, where
\begin{eqnarray}
&  &  \big \langle z' \, , \, A(x) \partial_{z'} G_1 \big( (0,z') \big) \big \rangle  \bigg|_{ z' = Q \kappa \delta (0,\omega)} \nonumber \\
&=&
C_{d}  (2-d) Q^{2-d}  (\kappa \delta)^{2-d}  \langle A(x) \omega , \omega \rangle + \tilde{C}_{d}  (4-d) Q^{4 -d}  (\kappa \delta)^{4-d}   \langle A(x) \omega , \omega \rangle + \mathcal{O}( (\kappa \delta)^{5-d} )  \,  \nonumber \\
& &  \label{eq:ddd4}
\end{eqnarray}

{\color{black} Next, we shall make frequent use of the Fourier transform \eqref{eq:ff1} with respect to $\mathfrak{q}:=\kappa\delta\omega=h^{-1}\delta \omega$. For clarity, we shall always write $\mathcal{F}_{\mathfrak{q}}$ to signify the Fourier transform in such a case.} Then using the results derived in \eqref{eq:ddd1}--\eqref{eq:ddd4}, one can obtain by direct though tedious calculations the following principal symbols around $x=y$ in the geodesic normal coordinate ({\color{black} here, we also refer to \cite{weyl2,weyl1,ACL1,ACL2} for related results in the literature and Example 1 in what follows for a concrete example}), when $d>2$:
{\color{black}
\begin{eqnarray}
&& p_{  \kappa  \mathcal{S}^{\kappa Q}_{\partial D}}(x,\xi)  \nonumber\\
&=& Q^{ d-2} \bigg[ 
 \mathcal{F}_{\mathfrak{q}} \left[ G_1 \left( Q \left( 0, \mathfrak{q} \right) \right)  \right] (\xi)  + C_{d}   h^{2} Q^{2-d} \mathcal{F}_{\mathfrak{q}} \bigg[  \frac{2-d}{24}   |\mathfrak{q}|^{-d}    \langle A(x) \mathfrak{q} , \mathfrak{q} \rangle^2\label{eq:eee1} \\
&& -  \frac{1}{6}  \, (d-1) H(x)  |\mathfrak{q}|^{2-d}    \langle A(x) \mathfrak{q} , \mathfrak{q} \rangle +  \frac{1}{6} |\mathfrak{q}|^{2-d}  | A(x) \mathfrak{q} |^2  \bigg] (\xi)  + \mathcal{O}( h^2 | \xi |^{-4} )  \bigg];\nonumber
\end{eqnarray}
where $(0, \mathfrak{q})\in\mathbb{R}^d$ with $\mathfrak{q} \in \mathbb{R}^{d-1}$, and:
\begin{eqnarray}
 &  & p_{   \kappa {\mathcal{K}^{\kappa Q}_{\partial D}}^* }(x,\xi)\nonumber   \\
& =  &
\frac{ Q^{d-2} }{2} \bigg[ 
 [\mathcal{F}_{\mathfrak{q}}   \big \langle Q \mathfrak{q} \, , \, A(x) \partial_{Q \mathfrak{q}} G_1 \big( (0, Q \mathfrak{q} ) \big) \big \rangle  ] ( \xi )  \nonumber \\
&  &  + \frac{1}{3} C_{d}  (2-d)  h Q^{2-d} \mathcal{F}_{\mathfrak{q}} \bigg[  |\mathfrak{q}|^{-d}    \langle \partial_{ \mathfrak{q} } A(x) \mathfrak{q} , \mathfrak{q} \rangle  \bigg](\xi)  + \mathcal{O}( h | \xi |^{-3} )  \bigg] \label{eq:eee4}  \,.
\end{eqnarray}
}

Finally, by letting $\mathfrak{q}=(\mathfrak{q}_j)_{j=1}^{d-1}$ and $\xi=(\xi_j)_{j=1}^{d-1}$, one can deduce the following relations by direct calculations {\color{black} (c.f. \cite{Hor1} Theorem 7.1.24, \cite{Galfand}, P.363-364):}
\begin{eqnarray}
&& \mathcal{F}_{\mathfrak{q}} \left[  |\mathfrak{q}|^{-d}  \right] (\xi)  = K_{0,d} |\xi |  \,,\ \   \mathcal{F}_{\mathfrak{q}} \left[  |\mathfrak{q}|^{2-d}  \right] (\xi)  = K_{1,d} |\xi |^{-1}  \,; \nonumber\\
&&\mathcal{F}_{\mathfrak{q}} \left[ \mathfrak{q}_i \mathfrak{q}_j |\mathfrak{q}|^{-d}  \right] (\xi)  = - K_{0,d} \partial_i \partial_j |\xi |  \,,  \ \ \
\mathcal{F}_{\mathfrak{q}} \left[ \mathfrak{q}_i \mathfrak{q}_j |\mathfrak{q}|^{2-d}  \right] (\xi)  = - K_{1,d} \partial_i \partial_j |\xi |^{-1}  \, ;\label{eq:ggg1} \\
&&\mathcal{F}_{\mathfrak{q}} \left[ \mathfrak{q}_i \mathfrak{q}_j \mathfrak{q}_k \mathfrak{q}_l |\mathfrak{q}|^{-d}  \right] (\xi)  = K_{0,d} \partial_i \partial_j \partial_k \partial_l |\xi |  \,,  \ \ \   \mathcal{F}_{\mathfrak{q}} \left[ \mathfrak{q}_i \mathfrak{q}_j \mathfrak{q}_k |\mathfrak{q}|^{-d}  \right] (\xi)  = i K_{0,d} \partial_i \partial_j \partial_k  |\xi |    \, ;\nonumber 
\end{eqnarray}
where $K_{j,d}$, $j=0,1$ are dimensional constants. 
{\color{black}
Next, we focus on the function $[\mathfrak{F} G_1 \big((0, z')\big)] (\xi )$, where $(0, z')\in\mathbb{R}^d$ with $z' \in \mathbb{R}^{d-1}$; and $\mathfrak{F}$ is standard Fourier transform, namely $\mathcal{F}_h$ in \eqref{eq:ff1} with $h=1$, but associated with $z'\in\mathbb{R}^{d-1}$.
In fact, by the definition that $G_1$ is the fundamental solution with outgoing radiation condition, we have
\begin{eqnarray*}
\left( \partial_{z_0 z_0}^2  +  \sum_{i=1}^{d-1} \partial_{z'_i z'_i}^2 + \left( 1 + 0^+ \mathrm{i} \right) \right) G_1 (z_0, z') = \delta_0( d z_0)   \delta_0( dz')
\end{eqnarray*}
where $ \delta_0( d z_0) ,  \delta_0( dz') $ are the dirac measure of the respective variables $z_0$ and $z'$ at $0$, and $ 0^+ $ is the regularizing limit in the resolvent $G_1$ (in the Fourier spectrum) that gives the outgoing radiation condition.  Therefore, via a Fourier transform, we directly have
\begin{eqnarray*}
\left( \partial_{z_0 z_0}^2  + \left( 1 - |\xi|^2 + 0^+ \mathrm{i} \right) \right) [\mathfrak{F} G_1 \big(( z_0, z')\big)] (\xi ) = \delta_0( d z_0)   
\end{eqnarray*}
which gives
\begin{eqnarray*}
[\mathfrak{F} G_1 \big(( z_0, z')\big)] (\xi )  = C_0 \frac{\mathrm{i} e^{ \mathrm{i} \sqrt{1 - |\xi|^2 } |z_0| }}{\sqrt{1 - |\xi|^2 }}
\end{eqnarray*}
for some constant $C_0$, where $z \mapsto z^{\frac{1}{2}} $ is always taken over the branch with angle between $0$ and $\pi$ (so that the above function decay as $| x_d | \rightarrow \infty$ when $|\xi| >  1$,) and therefore 
\begin{eqnarray*}
[\mathfrak{F} G_1 \big(( 0, z')\big)] (\xi )  =  \frac{ \mathrm{i} C_0 }{\sqrt{1 - |\xi|^2 }} \,.
\end{eqnarray*}
We now notice that 
\begin{eqnarray}
Q^{ d-2}  [\mathfrak{F} G_1 \big((0, Q z')\big)] ( \xi ) =  Q^{-1} [\mathfrak{F} G_1 \big((0, z')\big)] ( Q^{-1} \xi )  = \frac{\mathrm{i} C_0}{\sqrt{Q^2 - |\xi|^2 }}
\, ;\label{eq:ggg2}
\end{eqnarray}
which is now a well-defined tempered distribution in $\mathcal{S}'(\mathbb{R}^{d-1})$.  (In fact we recall the observation that the part inside $|\xi|<Q$ corresponds to the imaginary part of the fundamental solution which gives the outgoing radiation condition and is real analytic on $\mathbb{R}^{d-1}$, and the part outside $|\xi|>Q$ corresponds to the real part of the fundamental solution and its leading order at infinity provides the singularity type of the kernal at the origin.)
Now it is direct to see that whenever $|\xi|<Q$,  $ | [\mathfrak{F} G_1 \big((0, Q z')\big)] ( \xi ) | > C_0 Q^{-1}$, whereas
whenever $|\xi| > Q$, 
\begin{eqnarray}
Q^{ d-2}  [\mathfrak{F} G_1 \big((0, Q z')\big)] ( \xi )  \nonumber
&=& \frac{C_0}{|\xi| \sqrt{ 1- \frac{Q^2}{|\xi|^2}  }}  \nonumber \\
&=& C_0 \sum_{l=0}^\infty \frac{ \prod_{k=0}^{l-1} ( 2k + 1) }{ 2^l l! } Q^{2l}  |\xi|^{-2l-1}  \nonumber  \\
&=& \widehat{C}_{d}  |\xi |^{-1}  +  \widehat{\widehat{C}}_{d}  Q^{2} |\xi |^{-3} + \widehat{ \widehat{\widehat{C}} }_{d}  Q^{4} |\xi |^{-5}  + \mathcal{O}( | \xi |^{-7}  )
\, ;\label{eq:ggg3}
\end{eqnarray}
where $ \widehat{C}_{d}   ,  \widehat{\widehat{C}}_{d} ,  \widehat{ \widehat{\widehat{C}} }_{d}  $ are the same dimensional constants that we stated before, and the above series is uniform on $|\xi| \geq Q + \varepsilon$ for any $\varepsilon> 0$.  With this, we may also inspect that 
\begin{eqnarray*}
 [\mathfrak{F}  \big \langle z' \, , \, A(x) \partial_{z'} G_1 \big( (0,z') \big) \big \rangle  ] ( \xi ) &=& \sum_{i,j=1}^{d-1} A_{ij}(x)  \partial_j  \left( \frac{\mathrm{i} C_0  \xi_i}{\sqrt{1 - |\xi|^2 }} \right)  \\
& = & \mathrm{i} C_0    \left( \frac{ (d-1) H(x )}{\sqrt{1 - |\xi|^2 }}   +    \frac{  \langle  A(x) \xi, \xi \rangle }{\left( 1 - |\xi|^2 \right)^{\frac{3}{2}} } \right), 
\end{eqnarray*}
where $z \mapsto z^{\frac{3}{2}} $ is always taken over the branch with angle between $0$ and $3\pi$, and hence
\begin{eqnarray}
 Q^{ d-2}  [\mathfrak{F}  \big \langle Q z' \, , \, A(x) \partial_{Q z'} G_1 \big( (0, Q z') \big) \big \rangle  ] ( \xi ) & = & Q^{-1} [\mathfrak{F}  \big \langle z' \, , \, A(x) \partial_{z'} G_1 \big( (0, z') \big) \big \rangle  ] ( Q^{-1} \xi ) \nonumber \\
& = & \mathrm{i} C_0    \left( \frac{ (d-1) H(x)}{\sqrt{Q^2 - |\xi|^2 }}   +    \frac{  \langle  A(x) \xi, \xi \rangle }{\left( Q^2 - |\xi|^2 \right)^{\frac{3}{2}} } \right), 
\label{eq:ggg4}
\end{eqnarray}
and whenever $|\xi| > Q$, 
\begin{eqnarray}
&  & Q^{ d-2}  [\mathfrak{F}  \big \langle Q z' \, , \, A(x) \partial_{Q z'} G_1 \big( (0, Q z') \big) \big \rangle  ] ( \xi ) \nonumber \\
&=&  C_0   \sum_{i,j=1}^{d-1} A_{ij}(x)  \partial_i \partial_j \left( |\xi | \sqrt{ 1- \frac{Q^2}{|\xi|^2}  } \right) \nonumber \\
&=&  C_0   \sum_{l=0}^\infty \frac{ \prod_{k=0}^{l-2} ( 2k + 1)  }{ 2^l l! }  Q^{2l} \left(  \sum_{i,j=1}^{d-1} A_{ij}(x)  \partial_i \partial_j   |\xi|^{ 1 - 2l}  \right) \nonumber \\
& = & \widehat{C}_{d}  \sum_{i,j=1}^{d-1} A_{ij}(x) \partial_i \partial_j |\xi |  +  \widehat{\widehat{C}}_{d} Q^{2} \sum_{i,j=1}^{d-1}  A_{ij} (x)\partial_i \partial_j |\xi |^{-1} + \mathcal{O}( | \xi |^{-5}  )  ,
\label{eq:ggg5}
\end{eqnarray}
where  $ \widehat{C}_{d}   ,  \widehat{\widehat{C}}_{d} ,  \widehat{ \widehat{\widehat{C}} }_{d}  $ are (coincidentally) the same dimensional constants that we stated before.

By combining \eqref{eq:eee1}--\eqref{eq:ggg5}, we obtain:
\begin{eqnarray}
 &&p_{   \kappa \mathcal{S}^{\kappa Q}_{\partial D}}(x,\xi)  \nonumber \\
& =  &
Q^{-1} [\mathfrak{F} G_1 \big((0, z')\big)] ( Q^{-1} \xi )  \nonumber\\
& & +  C_d h^{2}  \bigg(  \frac{2-d}{24} K_{0,d} \sum_{i,j,k,l=1}^{d-1} A_{ij}(x) A_{kl}(x) \partial_i \partial_j \partial_k \partial_l |\xi | \label{eq:hhh1}\\
& &  + \frac{1}{6} K_{1,d}  \, (d-1) H(x) \sum_{i,j=1}^{d-1} A_{ij}(x) \partial_i \partial_j |\xi |^{-1}  + \frac{1}{6} K_{1,d} \, \sum_{i,j,k}^{d-1} A_{ik}(x) A_{kj}(x) \partial_i \partial_j |\xi |^{-1} \bigg)  \nonumber \\
&  & \quad + \mathcal{O}( h^{2} | \xi |^{-4} ) ,\nonumber
\end{eqnarray}
where $Q^{-1} [\mathfrak{F} G_1 \big((0, z')\big)] ( Q^{-1} \xi )  $ satisfies \eqref{eq:ggg2}-\eqref{eq:ggg3} and
\begin{eqnarray}
&& p_{   \kappa  {\mathcal{K}^{\kappa Q}_{\partial D}}^* }(x,\xi) \nonumber\\
& =  &
\frac{ 1}{2} \bigg[  Q^{-1} [\mathfrak{F}  \big \langle z' \, , \, A(x) \partial_{z'} G_1 \big( (0, z') \big) \big \rangle  ] ( Q^{-1} \xi  )
\nonumber \\
&  &   - \mathrm{i} C_d K_{0,d} (2-d)  h  
\sum_{i,j,k =1}^{d-1} \partial_k A_{ij}(x) \partial_i \partial_j \partial_k  |\xi |    + \mathcal{O}( h | \xi |^{-3} )  \bigg]\,,\label{eq:hhh2} 
\end{eqnarray}
where $Q^{-1} [\mathfrak{F}  \big \langle z' \, , \, A(x) \partial_{z'} G_1 \big( (0, z') \big) \big \rangle  ] ( Q^{-1} \xi ) $ satisfies \eqref{eq:ggg4}-\eqref{eq:ggg5}.
{\color{black}
Moreover careful tracing the series expansions from \eqref{eq:cc3}-\eqref{eq:cc4} that arrives to \eqref{eq:hhh1}-\eqref{eq:hhh2}, we obtain that the terms in \eqref{eq:hhh1}-\eqref{eq:hhh2}  only contain singularities induced from $Q^{-1} [ \mathfrak{F} \left( (z')^\alpha \partial_{z'}^{\beta} G_1 \big((0, z') \big) \right) ] ( Q^{-1} \xi) $ for some $\alpha,\beta \in \mathbb{N}^{d-1}$ where
\[
 [ \mathfrak{F} \left( (z')^\alpha \partial_{z'}^{\beta} G_1 \big((0, z') \big) \right) ] ( \xi) = \mathrm{i}^{\alpha - \beta}
  \partial_{\xi}^\alpha  \left( \xi^{\beta} [ \mathfrak{F} G_1  \big((0, z') \big) ] (\xi) \right) 
= \mathrm{i}^{\alpha - \beta +1} C_0 \partial_{\xi}^\alpha  \left( \xi^{\beta} \left( 1 - |\xi|^2 \right)^{-\frac{1}{2}}  \right) 
\]
which contains only singularity when $|\xi|^2 = 1$ or $Q^2$, and hence in $\widetilde{S}^m (T^*(\partial D) ) $ for some $m \in \mathbb{Z}$.
}
}

In summarizing the derivation and discussion so far in this subsection, we present the following lemma.

\begin{Lemma}
\label{lemma_SK}
The operators $ \mathcal{S}^{\kappa Q}_{\partial D}$ and $ {\mathcal{K}^{\kappa Q}_{\partial D}}^* $ can be decomposed as follows:
\begin{equation}\label{eq:dec1}
 \mathcal{S}^{\kappa Q}_{\partial D} = h \left(  \mathcal{S}^{\kappa Q}_{-1} + h^2  \mathcal{S}^{\kappa Q}_{-3}  \right) \quad\mbox{and}\quad {\mathcal{K}^{\kappa Q}_{\partial D}}^* = \frac{ h }{2} \left( \mathcal{K}^{\kappa Q}_{-1} + h \mathcal{K}^{\kappa Q}_{-2}  \right) \,,
\end{equation}
where
\beqnx
 \mathcal{S}^{\kappa Q}_{-1} \in \Phi\mathrm{SO}^{-1}_h \,,\,   \mathcal{S}^{\kappa Q}_{-3} \in \Phi\mathrm{SO}^{-3}_h \quad\mbox{and}\quad
\mathcal{K}^{\kappa Q}_{-1} \in \Phi\mathrm{SO}^{-1}_h \,,\,  \mathcal{K}^{\kappa Q}_{-2} \in \Phi\mathrm{SO}^{-2}_h \,,\,
\eqnx
and in the geodesic normal coordinate, we have { \color{black}
\begin{eqnarray}
 p_{ \mathcal{S}^{\kappa Q}_{-1}}(x,\xi) 
& = &  \frac{\mathrm{i} C_0}{\sqrt{Q^2 - |\xi|^2 }} , \label{eq:dec2}\\
 p_{ \mathcal{K}^{\kappa Q}_{-1}}(x,\xi) 
 &=&  \mathrm{i} C_0    \left( \frac{ (d-1) H(x)}{\sqrt{Q^2 - |\xi|^2 }}   +    \frac{  \langle  A(x) \xi, \xi \rangle }{\left( Q^2 - |\xi|^2 \right)^{\frac{3}{2}} } \right)  
 \label{eq:dec3} 
\end{eqnarray}
where $z \mapsto z^{\frac{l}{2}} $ is always taken over the branch with angle between $0$ and $l\pi$ for $l = 1,3$, and whenever $|\xi| > Q$, we have
\begin{eqnarray}
p_{ \mathcal{S}^{\kappa Q}_{-1}}(x,\xi)  
&  = &  \widehat{C}_{d}  |\xi |^{-1}  + \widehat{\widehat{C}}_{d} Q^{2} |\xi |^{-3} + \widehat{ \widehat{\widehat{C}} }_{d}  Q^{4} |\xi |^{-5} + \mathcal{O}( | \xi |^{-7}  ),  \label{eq:dec4}  \\
p_{ \mathcal{S}^{\kappa Q}_{-3}}(x,\xi) 
 &= & \frac{2-d}{24} C_d K_{0,d} \sum_{i,j,k,l=1}^{d-1} A_{ij}(x) A_{kl}(x) \partial_i \partial_j \partial_k \partial_l |\xi | + \frac{1}{6} C_d K_{1,d}  \, (d-1) H(x)\times \nonumber \\
&&  \sum_{i,j=1}^{d-1} A_{ij}(x) \partial_i \partial_j |\xi |^{-1}  + \frac{1}{6} C_d K_{1,d}  \, \sum_{i,j,k}^{d-1} A_{ik}(x) A_{kj}(x) \partial_i \partial_j |\xi |^{-1}+ \mathcal{O}( | \xi |^{-4} ),  \nonumber \\
& & \label{eq:dec5}  \\
p_{ \mathcal{K}^{\kappa Q}_{-1}}(x,\xi)  &=& \widehat{C}_{d}  \sum_{i,j=1}^{d-1} A_{ij}(x) \partial_i \partial_j |\xi |  +  \widehat{\widehat{C}}_{d}  Q^{2} \sum_{i,j=1}^{d-1}  A_{ij} (x)\partial_i \partial_j |\xi |^{-1} + \mathcal{O}( | \xi |^{-5}  )   \label{eq:dec6}  \\
 p_{ \mathcal{K}^{\kappa Q}_{-2}}(x,\xi)  
 &=&   - \mathrm{i} C_d K_{0,d} (2-d)  
\sum_{i,j,k =1}^{d-1} \partial_k A_{ij}(x) \partial_i \partial_j \partial_k  |\xi |    + \mathcal{O}( | \xi |^{-3} ) ,    \label{eq:dec7}  
\end{eqnarray}
}and $ C_0, C_{d}, \widehat{C}_{d}   ,  \widehat{\widehat{C}}_{d} ,  \widehat{ \widehat{\widehat{C}} }_{d}  $ and $K_{j,d}, j = 0,1$ are dimensional constants.

It is also noted that $ \mathcal{S}^{\kappa Q}_{-1}$ and $ \frac{1}{2} \mathcal{K}^{\kappa Q}_{-1}$ are the principal parts of the operators $  \kappa \mathcal{S}^{\kappa Q}_{\partial D}$ and $ \kappa {\mathcal{K}^{\kappa Q}_{\partial D}}^* $, respectively.
\end{Lemma}

{\color{black}
\begin{Remark}
{\color{black}
We would like to remark that the singularity type $\left( Q^2 - \| \xi \|^2_{g(x)} \right)^{-\frac{1}{2}}$ of $p_{ \mathcal{S}^{\kappa Q}_{-1}}(x,\xi)  $ in \eqref{eq:ggg2} at $ \| \xi \|_{g(x)}  = Q$  (for any $Q>0$) under a general coordinate corresponds to the following decay order of the kernal of $ \mathcal{S}^{\kappa Q}_{\partial D}$ that, 
as $|x-y| \rightarrow \infty$,
\[
  \left(  \kappa^{-1} \partial_y \right)^{\alpha}  \left(  \kappa^{-1} \partial_y \right)^{\beta}  G_{\kappa Q} (x-y)   \sim C_d \mathrm{i} ( \kappa Q )^{ d-2} e^{\mathrm{i} \kappa Q |x-y| } ( \kappa  Q |x-y| )^{- \frac{d-1}{2} - |\alpha| - |\beta|}  \quad \forall \alpha,\beta \in \mathbb{N}^{d-1} \,,
\]
which comes readily from the fact that for all $m \in \mathbb{N}$, we have (cf. \cite{AandS}, P. 361) 
\[
 \left(  r^{-1} \partial_r \right)^{m} [ r^{\frac{d-2}{2}}  H^{(1)}_{\frac{d-2}{2}}(r) ]  =  r^{\frac{d-2}{2} - m}  H^{(1)}_{\frac{d-2}{2} - m }(r) \,,
\]
and as $r \rightarrow \infty$, we have (cf. \cite{AandS}, P. 364) 
\[
H^{(1)}_{\frac{d-2}{2} - m }(r)   \sim C_d r^{-\frac{1}{2}} e^{\mathrm{i} r} \,.
\]
}{\color{black}
The readers may look into \cite{Galfand}, P.363 and P.365 for the Fourier transform of $\left(  c^2 - \| \xi \|^2_{g(x)}  \right)^{\frac{l_c}{2}}$ for $l_c \in \mathbb{Z} \backslash 2 \mathbb{N}$, $c = 1,Q$ in terms of Hankel functions and powers of the norm, 
as well as Gegenbauer's addition formula in \cite{Watson}, P. 365 
and the generating function (and reproducting kernal property) of Gegenbauer polynomials in \cite{stein}, Chapter IV, (also c.f. \cite{ACL1}, P.6 and P.35)
to obtain the Fourier transform of a product of $\left(  c^2 - \| \xi \|^2_{g(x)}  \right)^{\frac{l_c}{2}}$ and their conjugates via repeated convolution.
}

In fact, the above phenomenon reflects the fact that \eqref{eq:ggg2} indicates that the propagation of singularities of $ \mathcal{S}^{\kappa Q}_{\partial D}$ is described by (c.f. \cite{Tacy} (1.9) )
\[
\text{WF}_h( \mathcal{S}^{\kappa Q}_{\partial D}  u )  \backslash \text{WF}_h( u )  \subset \{ (x,\xi) \in T^*(\partial D) : \| \xi \|_{g(x)}^2 = Q^2 \} ,
\]
where $\text{WF}_h$ is the semi-classical wavefront set.  A similar phenomenon can be observed over ${\mathcal{K}^{\kappa Q}_{\partial D}}^* $; see \cite{Tacy} for more refined $L^2(\partial D)$ estimates of $ \mathcal{S}^{\kappa Q}_{\partial D}$ and ${\mathcal{K}^{\kappa Q}_{\partial D}}^* $ when $\partial D$ is non-smooth.

Meanwhile, when $\kappa$ is fixed, treating $\kappa^2 Q^2$ as a constant and taking $h = 1$, a similar calculation as in the above Lemma would lead us to the observation that $ p_{ \mathcal{S}^{\kappa Q}_{\partial D} }(x,\xi)  = \|\xi\|^{-1}_{g(x)} + \mathcal{O}( \|\xi\|^{-2}_{g(x)}  ) $ and hence $ \text{WF}( \mathcal{S}^{\kappa Q}_{\partial D}  u ) \subset \text{WF}( u )  $. Therefore $\mathcal{S}^{\kappa Q}_{\partial D}$ is bounded from $L^{2}(\partial D)$ to $H^{1}(\partial D)$ for any fixed $\kappa > 0$.  
This is also reflected by the fact that for a compact $\partial D$, with a fixed $\kappa > 0$, the decay order of $G_{\kappa Q} (x-y)$ (at infinity) would not be observed on $\partial D$.

\end{Remark}
}

{\color{black}
\noindent \textbf{Example 1.}  
Let us consider 
 $D \subset \mathbb{R}^3$ such that 
\[
\left\{(x,y,z) \in \mathbb{R}^3 : (x,y) \in B_1(0) \,, \,  z = \frac{x^2+y^2}{2} \right\} \subset \partial D.
\]
Consider
{
\begin{eqnarray*}
R : [0,1] \rightarrow [0,\infty) \quad r \mapsto \int_0^r \sqrt{1+s^2} ds \quad \text{and} \quad
r : B_1(0) \rightarrow [0,\infty) \quad (x_1,x_2) \mapsto \sqrt{x_1^2 + x_2^2 }. \\
\end{eqnarray*}
}Then there exists $r_0 >0$ such that the following coordinate patch
\[
\Phi  : B_{r_0} (0) \rightarrow \partial D \quad  (x_1, x_2 ) \mapsto \left( \frac{x_1R^{-1} ( r(x_1,x_2)  ) }{r(x_1,x_2) },  \frac{x_2R^{-1} ( r(x_1,x_2)  ) }{r(x_1,x_2) } , \frac{R^{-1} ( r(x_1,x_2) ) }{2}  \right)
\]
is in fact the expoential map $\exp_0 (\cdot )$  at $0 \in \partial D$.  We also notice that
{
\begin{eqnarray*} 
\partial_s R^{-1} (s) = \frac{1}{\left( 1+ x^2 \right)^{\frac{1}{2}}} \bigg|_{ R^{-1} (s) } &\,, \,& \partial^2_s R^{-1} (s) = \frac{x}{\left( 1+ x^2 \right)^2 }\bigg|_{R^{-1} (s) }  \,,\,\\
 \partial_s^3 R^{-1} (s) = \frac{ 3 x^2 -1 }{\left( 1+ x^2  \right)^{\frac{7}{2}} }\bigg|_{R^{-1} (s) } &\,,\, &
\partial_s^4 R^{-1} (s) = \frac{ \left( 13 - 15 x^2  \right) x }{\left( 1+ x^2  \right)^{5} }\bigg|_{R^{-1} (s) }  \,,\,\\
 \partial_s^5 R^{-1} (s) = \frac{ 105 x^4  - 162 x^2 +13  }{\left( 1+ x^2  \right)^{ \frac{13}{2}} } \bigg|_{R^{-1} (s) } & \,,\, &
\partial_s^6 R^{-1} (s) = \frac{ \left( -945 x^4 + 2202 x^2 - 493  \right) x }{\left( 1+ x^2  \right)^{5} }\bigg|_{R^{-1} (s) } \,,\,
\end{eqnarray*}
}and when $s >0 $ is sufficiently small, in a uniform manner
{
\begin{eqnarray*}
R^{-1} (s) = s - \frac{1}{6} s^3 + \frac{13}{120} s^5 + O(s^7) & \,,\, &
\frac{ R^{-1} (s) }{s} = 1 - \frac{1}{6} s^2 + \frac{13}{120} s^4 + O(s^6) \,,\, \\
\partial_s \left( \frac{ R^{-1} (s) }{s}  \right) = - \frac{1}{3} s + \frac{13}{30} s^3 + O(s^5) & \,,\, &
\partial_s^2 \left( \frac{ R^{-1} (s) }{s}  \right) = - \frac{1}{3} + \frac{13}{10} s^2 + O(s^4) \,,\,
\end{eqnarray*}
}and hence $\partial_s^i \big( \frac{ R^{-1} (s) }{s} \big) $ exists in a neighborhood of $s=0$.  We are now ready to compute the first and second fundamental forms, the normal and volume as follows:
{
\begin{eqnarray*}
\partial_{x_1} \Phi (x_1,x_2) &=& (1,0,x_1) \frac{ R^{-1} (s) }{s}\bigg|_{r(x_1,x_2)} +  (x_1,x_2,0) \partial_s \left( \frac{ R^{-1} (s) }{s}  \right)  \bigg|_{r(x_1,x_2)},  \\
\partial_{x_2} \Phi (x_1,x_2) &=& (0,1,x_2) \frac{ R^{-1} (s) }{s}\bigg|_{r(x_1,x_2)} +  (x_1,x_2,0) \partial_s \left( \frac{ R^{-1} (s) }{s}  \right)  \bigg|_{r(x_1,x_2)} , \\
g_{11}  (x_1,x_2) &=&  \left( 1 + x_1^2 \right)  \left(  \frac{ R^{-1} (s) }{s} \right)^2  \bigg|_{r(x_1,x_2)} +  2 x_1  \frac{ R^{-1} (s) }{s}\bigg|_{r(x_1,x_2)}  \partial_s \left( \frac{ R^{-1} (s) }{s}  \right)  \bigg|_{r(x_1,x_2)}  \\
&  &  + \left( r(x_1,x_2) \right)^2  \left(  \partial_s \left( \frac{ R^{-1} (s) }{s}  \right)  \right)^2 \bigg|_{r(x_1,x_2)} , \\
g_{12}  (x_1,x_2) &=&  x_1 x_2 \left(  \frac{ R^{-1} (s) }{s} \right)^2  \bigg|_{r(x_1,x_2)} +  \left(  x_1 + x_2 \right)  \frac{ R^{-1} (s) }{s}\bigg|_{r(x_1,x_2)}  \partial_s \left( \frac{ R^{-1} (s) }{s}  \right)  \bigg|_{r(x_1,x_2)}  \\
&  &  + \left( r(x_1,x_2) \right)^2 \left(  \partial_s \left( \frac{ R^{-1} (s) }{s}  \right)  \right)^2 \bigg|_{r(x_1,x_2)} , \\
g_{22}  (x_1,x_2) &=&  \left( 1 + x_2^2 \right)  \left(  \frac{ R^{-1} (s) }{s} \right)^2  \bigg|_{r(x_1,x_2)} +  2 x_2  \frac{ R^{-1} (s) }{s}\bigg|_{r(x_1,x_2)}  \partial_s \left( \frac{ R^{-1} (s) }{s}  \right)  \bigg|_{r(x_1,x_2)}  \\
&  & + \left( r(x_1,x_2) \right)^2  \left(  \partial_s \left( \frac{ R^{-1} (s) }{s}  \right)  \right)^2 \bigg|_{r(x_1,x_2)} , \\
\sqrt{\det(g(x_1,x_2))} &=&    \left(  \frac{ R^{-1} (s) }{s} \right) \bigg|_{r(x_1,x_2)} \bigg(   \left( 1+ \left( r(x_1,x_2) \right)^2 \right) \left(  \frac{ R^{-1} (s) }{s} \right)^2  \bigg|_{r(x_1,x_2)}  \\
&  &  \quad \quad +2( x_1 + x_2 )  \frac{ R^{-1} (s) }{s}\bigg|_{r(x_1,x_2)}  \partial_s \left( \frac{ R^{-1} (s) }{s}  \right)  \bigg|_{r(x_1,x_2)}  \\
& &  \quad \quad + \left( r^4 + (1 - 2x_1x_2 ) r^2  + 2 x_1 x_2 \right)  \left( r(x_1,x_2) \right)^2  \left(  \partial_s \left( \frac{ R^{-1} (s) }{s}  \right)  \right)^2 \bigg|_{r(x_1,x_2)}   \bigg)^{\frac{1}{2}}, \\
\nu(x_1,x_2) &=& \frac{1}{\sqrt{\det(g(x_1,x_2))}} \bigg( (-x_1, -x_2, 1)  \frac{ R^{-1} (s) }{s}\bigg|_{r(x_1,x_2)}   \\
&  &   \quad \quad  - \left( (x_1-x_2) x_2, (x_2 - x_1) x_1, (x_1 + x_2 ) \right)  \partial_s \left( \frac{ R^{-1} (s) }{s}  \right)  \bigg|_{r(x_1,x_2)}  \bigg), \\
\partial_{x_1x_1}^2 \Phi (x_1,x_2) &=& (0,0,1) \frac{ R^{-1} (s) }{s}\bigg|_{r(x_1,x_2)} +  \left(1 + \frac{x_1}{r(x_1,x_2)} ,0 ,\frac{x_1^2}{r(x_1,x_2)} \right) \partial_s \left( \frac{ R^{-1} (s) }{s}  \right)  \bigg|_{r(x_1,x_2)}  \\
& &  + \frac{x_1}{r(x_1,x_2)}(x_1,x_2,0) \partial_s^2 \left( \frac{ R^{-1} (s) }{s}  \right)  \bigg|_{r(x_1,x_2)} , \\
\partial_{x_1 x_2}^2 \Phi (x_1,x_2) &=& \frac{1}{2} \left(1 + \frac{x_2}{r(x_1,x_2)} ,1 + \frac{x_1}{r(x_1,x_2)} ,\frac{x_1 x_2}{r(x_1,x_2)} \right) \partial_s \left( \frac{ R^{-1} (s) }{s}  \right)  \bigg|_{r(x_1,x_2)}  \\
&  &  + \frac{x_1 + x_2}{2 r(x_1,x_2)}(x_1,x_2,0) \partial_s^2 \left( \frac{ R^{-1} (s) }{s}  \right)  \bigg|_{r(x_1,x_2)},  \\
\partial_{x_2 x_2}^2 \Phi (x_1,x_2) &=& (0,0,1) \frac{ R^{-1} (s) }{s}\bigg|_{r(x_1,x_2)} +  \left( 0 , 1 + \frac{x_2}{r(x_1,x_2)} ,\frac{x_2^2}{r(x_1,x_2)} \right) \partial_s \left( \frac{ R^{-1} (s) }{s}  \right)  \bigg|_{r(x_1,x_2)}  \\
&  &  + \frac{x_2}{r(x_1,x_2)}(x_1,x_2,0) \partial_s^2 \left( \frac{ R^{-1} (s) }{s}  \right)  \bigg|_{r(x_1,x_2)} , \\
A_{11}  (x_1,x_2)  &= & \left(  \frac{ R^{-1} (s) }{s} \right)^2 \bigg|_{r(x_1,x_2)}    +  \left(r (x_1,x_2) x_1 + x_2\right) \frac{ R^{-1} (s) }{s}\bigg|_{r(x_1,x_2)}  \partial_s \left( \frac{ R^{-1} (s) }{s}  \right)  \bigg|_{r(x_1,x_2)}  \\
& &  + \left( (x_1 - x_2) x_2 (1 + \frac{x_1}{r (x_1,x_2) }) + \frac{x_1^2}{r (x_1,x_2)} (x_1 + x_2) \right)   \left(  \partial_s \left( \frac{ R^{-1} (s) }{s}  \right)  \right)^2 \bigg|_{r(x_1,x_2)} , \\
A_{12}  (x_1,x_2) &=&  - \frac{1}{2} \left( x_1+ x_2 + \frac{x_1 x_2}{r(x_1,x_2)}  \right) \frac{ R^{-1} (s) }{s}\bigg|_{r(x_1,x_2)}  \partial_s \left( \frac{ R^{-1} (s) }{s}  \right)  \bigg|_{r(x_1,x_2)}  \\
&  & + \frac{1}{2} \left( (x_1 -  x_2)^2 + \frac{x_1 x_2 (x_1 + x_2)}{r(x_1,x_2)} \right)   \left(  \partial_s \left( \frac{ R^{-1} (s) }{s}  \right)  \right)^2 \bigg|_{r(x_1,x_2)}  \\
&  & - \frac{r(x_1,x_2) (x_1 + x_2)}{2}  \frac{ R^{-1} (s) }{s}\bigg|_{r(x_1,x_2)}   \partial_s^2 \left( \frac{ R^{-1} (s) }{s}  \right)  \bigg|_{r(x_1,x_2)}  , \\
A_{22}  (x_1,x_2)  &= & \left(  \frac{ R^{-1} (s) }{s} \right)^2 \bigg|_{r(x_1,x_2)}    +  \left(r (x_1,x_2) x_2 + x_1\right) \frac{ R^{-1} (s) }{s}\bigg|_{r(x_1,x_2)}  \partial_s \left( \frac{ R^{-1} (s) }{s}  \right)  \bigg|_{r(x_1,x_2)} \\
& &  + \left( (x_2 - x_1) x_1 (1 + \frac{x_2}{r (x_1,x_2) }) + \frac{x_2^2}{r (x_1,x_2)} (x_1 + x_2) \right)  \left(  \partial_s \left( \frac{ R^{-1} (s) }{s}  \right)  \right)^2 \bigg|_{r(x_1,x_2)} . \\
\end{eqnarray*}
}We now notice the semi-geodesic normal coordinate patch is given by, for a small $\tilde{r}$:
\[
\tilde{ \Phi }  : B_{r_0} (0) \times (- \tilde{r}, \tilde{r} ) \rightarrow \partial D \quad  (x_1, x_2, x_3) \mapsto \Phi(x_1,x_2) + x_3 \nu(x_1,x_2).
\]
Hence considering
\[
G_{\kappa Q} (x-y) = \frac{e^{\mathrm{i} \kappa Q |x-y|}}{|x-y|}  = \frac{1}{4 \pi} \sum_{l=0}^\infty \frac{1}{l!} \kappa^l  Q^l \mathrm{i}^l |x-y|^{l-1},
\]
and letting $x = \Phi(0)$, $y = \Phi(\delta \omega) = \left(\omega_1 R^{-1}(\delta), \omega_2 R^{-1}(\delta) , \frac{\left( R^{-1}(\delta) \right)^2 }{2} \right)$, $|\omega| = 1$, then we have
{
\begin{eqnarray*}
g_{ij}(0) &=& A_{ij}(0) = \delta_{ij},  \\
y  &=& x   +   R^{-1}(\delta) \omega - \frac{\left( R^{-1}(\delta) \right)^2 }{2} \nu(0) 
= x+ \delta \omega  - \frac{1}{2} \delta^2 \nu(0) + \frac{1}{6} \delta^3  \omega  + O (\delta^4), \\
|y-x|^2 & =& \left( R^{-1}(\delta) \right)^2   \left( 1+   \frac{ \left( R^{-1}(\delta) \right)^2}{4}   \right)^2 \\
& =& \delta^2  \left( 1 - \frac{1}{6} \delta^2 + O(\delta^4) \right)^2   \left( 1 + \frac{1}{4} \delta^2 + O(\delta^4) \right)^2 \\
& = & \delta^2  \left( 1 - \frac{1}{12} \delta^2 + O(\delta^4) \right),  \\
|y-x|^{l} & =&  \left( |y-x|^2 \right)^{\frac{l}{2}}  =  \delta^l  \left( 1 - \frac{l}{24} \delta^2 + O(\delta^4) \right) , \\
\sqrt{ \det(g(y)) } & =& 1 - \frac{1}{6} \delta^2 + O(\delta^3) ,\\
G_{\kappa Q} (x-y) d \sigma(y) &=&  \frac{1}{4 \pi} \left(  \sum_{l=0}^\infty \frac{1}{l!} k^l Q^l \mathrm{i}^l  \delta^{l-1}  \left( 1 - \frac{l-1}{24} \delta^2 + O(\delta^4) \right)  \right) \left( 1 - \frac{1}{6} \delta^2 + O(\delta^3) \right) dy \\
&=&  \frac{1}{4 \pi} (\kappa Q )  \left(   \sum_{l=0}^\infty \frac{\mathrm{i}^l}{l!}   \left( Q \kappa \delta \right)^{l-1} + O \left(  \left( \kappa \delta \right)^{4} \right) + \kappa^{-2} Q^{-1} \left[ - \frac{1}{8} \left( \kappa \delta \right) +  O \left(  \left( \kappa \delta \right)^{2} \right) \right ] \right) dy  \\
&=&  \frac{1}{4 \pi} (\kappa Q )  \left(  G_1(  Q  \kappa \delta ) + \kappa^{-2} Q^{-1} \left[ - \frac{1}{8} \left( \kappa \delta \right) +  O \left(  \left( \kappa \delta \right)^{2} \right) \right ] \right) dy .
\end{eqnarray*}
}
Now with  \eqref{eq:ggg1}-\eqref{eq:ggg2}, we have
\begin{eqnarray*}
p_{   \kappa \mathcal{S}^{\kappa Q}_{\partial D}}(x,\xi) = \frac{\mathrm{i} C_0}{\sqrt{Q^2 - |\xi|^2 }} + h^2 p_{ \mathcal{S}^{\kappa Q}_{-3}}(x,\xi) ,
\end{eqnarray*}
and when $ | \xi |$ is large,
\begin{eqnarray*}
p_{ \mathcal{S}^{\kappa Q}_{-3}}(x,\xi) = - \frac{1}{8}  | \xi |^{-3}  + \mathcal{O}( | \xi |^{-4} )  \,, \nonumber
\end{eqnarray*}
which coincides with \eqref{eq:dec2} and \eqref{eq:dec5} considering the fact that $ g_{ij}(0) = A_{ij}(0) = \delta_{ij} $. 
}

\subsection{Layer potential operators associated with a surface in $D$ }\label{sect:2.3}

For the subsequent use, we shall also need to consider layer potential operators defined on surfaces in $D$, which correspond to the boundaries of certain interior domains. To that end, for a given $R\in\mathbb{R}_+$, let us consider a surface $\Gamma_R \subset D$ which has a global $C^\infty$ diffeomorphism $F_R: \partial D  \rightarrow \Gamma_R $ and satisfies $\mathrm{dist}(\Gamma_R, \partial D) \geq R $.  Consider a smooth vector field $X$ in $D \subset \mathbb{R}^d$, which may coincide with either a tangent vector field or a normal vector field $\nu$ when restricted on $\Gamma_R$.  Let us also write $X(f) = df(X) = \sum_{i=1}^d X_i(x) \partial_i f(x)$.
Then we consider the following operator $ \widetilde{\mathcal{S}^\kappa_{\partial D}}  : L^2(\partial D) \rightarrow L^2(\partial D)$ and $ \widetilde{ X \circ \mathcal{S}^\kappa_{\partial D}}  : L^2(\partial D) \rightarrow L^2(\partial D)$:
\begin{equation}\label{eq:kkk1}
\widetilde{\mathcal{S}^\kappa_{\partial D}} [\phi] (x) := \left( \mathcal{S}^\kappa_{\partial D} [\phi ] \right) (F_R (x)) \,,\, \quad \left(  \widetilde{X \circ \mathcal{S}^\kappa_{\partial D}} \right) [\phi] (x) := \left[ X \circ \left( \mathcal{S}^\kappa_{\partial D} [\phi ] \right) \right]  (F_R (x))
\end{equation}
for $x \in \partial D$ and $\nu_{F_R (x)}$ is a normal of $\Gamma_R$ at the point $F_R(x)$. For a simple illustration, if $D$ is strictly convex and contains the origin $\mathbf{0}\in\mathbb{R}^d$ with $\mathrm{dist}(\partial D, \mathbf{0}) > 0$, then for any $0< R < \mathrm{dist}(\partial D,\mathbf{0})$ we can choose $\Gamma_{R} = \left( 1 - \frac{R}{\mathrm{dist}(\partial D, \mathbf{0})} \right) \partial D $ and $F_R(x) = \left(1 - \frac{R}{\mathrm{dist}(\partial D, \mathbf{0})} \right) x$.

\begin{Lemma}
\label{lemma_SK_tilde}
Given $R\in\mathbb{R}_+$ and a surface $\Gamma_R \subset D$ with $\mathrm{dist}(\Gamma_R, \partial D) \geq R $ and a smooth vector field $X$ in $D$, the operators $  \widetilde{\mathcal{S}^{\kappa Q}_{\partial D}} $ and $  \widetilde{ X \circ \mathcal{S}^\kappa_{\partial D}}  $ can be decomposed as follows:
\begin{eqnarray}
\widetilde{\mathcal{S}^{\kappa Q}_{\partial D}}  &=& Q^{\frac{d-1}{2}}  \sum_{\alpha \geq 0}  h^{\frac{d-1}{2} + |\alpha|}  \widetilde{\mathcal{S}^{\kappa Q}}_{\alpha, -d-1-|\alpha|} \,, \\
\widetilde{ X \circ \mathcal{S}^{\kappa Q}_{\partial D}}  &=& Q^{\frac{d-1}{2}}  \sum_{\alpha \geq 0}  h^{\frac{d-1}{2} + |\alpha|}  \widetilde{ ( X \circ  \mathcal{S}^{\kappa Q})}_{\alpha, -d-1-|\alpha|}  \,,
\end{eqnarray}
where
\begin{equation}
\begin{split}
\widetilde{\mathcal{S}^{\kappa Q}}_{\alpha, -d-1-|\alpha|}\,,\,\widetilde{ ( X \circ  \mathcal{S}^{\kappa Q})}_{\alpha, -d-1-|\alpha|}   \in \Phi\mathrm{SO}^{-d-1-|\alpha|}_h  \,,\, \\
p_{\widetilde{\mathcal{S}^{\kappa Q}_{\alpha, -d-1-|\alpha|}} } ,  p_{ \widetilde{ ( X \circ  \mathcal{S}^{\kappa Q})}_{\alpha, -d-1-|\alpha|} } \in S^{-d-1-|\alpha|} (T^*(\partial D))
\end{split}
\end{equation}
and
\begin{equation}
\begin{split}
\big|p_{\widetilde{\mathcal{S}^{\kappa Q}}_{\alpha, -d-1-|\alpha|}} (x,\xi) \big| \leq C_{d,\alpha} R^{- \frac{d-3}{2} - |\alpha|} |\xi|^{-d-1-|\alpha|} \,,\\
 \big|   p_{\widetilde{ ( X \circ  \mathcal{S}^{\kappa Q})}_{\alpha, -d-1-|\alpha|} } (x,\xi)   \big| \leq C_{d,\alpha,X} R^{- \frac{d-1}{2} - |\alpha|} |\xi|^{-d-1-|\alpha|},
\end{split}
\end{equation}
for some constants $C_{d,\alpha}$ and $C_{d,\alpha,X}$.
\end{Lemma}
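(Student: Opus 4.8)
The plan is to carry out, for the interior surface $\Gamma_R$, the kernel-then-Fourier analysis that underlies Lemma~\ref{lemma_SK}, but with the \emph{small}-argument expansion of the fundamental solution replaced by its \emph{large}-argument expansion. Unwinding \eqref{eq:kkk1} and \eqref{fundamental2}, the operator $\widetilde{\mathcal{S}^{\kappa Q}_{\partial D}}$ has Schwartz kernel $K_R(x,y)=G_{\kappa Q}\!\big(F_R(x)-y\big)\sqrt{\det g(y)}$ on $\partial D\times\partial D$, and $\widetilde{X\circ\mathcal{S}^{\kappa Q}_{\partial D}}$ has kernel $\big\langle X(F_R(x)),\,\nabla_z G_{\kappa Q}(z-y)\big\rangle\big|_{z=F_R(x)}\sqrt{\det g(y)}$. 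The structural input that makes everything run is the uniform lower bound $\rho(x,y):=|F_R(x)-y|\ge\mathrm{dist}(\Gamma_R,\partial D)\ge R>0$: it forces both kernels to be $\mathcal{C}^\infty$ in $(x,y)$ with no diagonal singularity, and it guarantees that the Hankel function ${H}^{(1)}_{(d-2)/2}$ in \eqref{fundamental2} is evaluated only at the large argument $\kappa Q\rho\ge QR/h\to\infty$.

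First I would expand the kernels. Using the large-argument asymptotics of ${H}^{(1)}_{(d-2)/2}$ — equivalently, inserting the Fourier representation $G_\kappa(w)=c_d\int_{\mathbb{R}^d}e^{\mathrm{i}w\cdot\zeta}(|\zeta|^2-\kappa^2-\mathrm{i}0)^{-1}\,d\zeta$ and rescaling $\zeta\mapsto\zeta/h$ — one writes, for every $N$,
$$K_R(x,y)=e^{\mathrm{i}Q\rho(x,y)/h}\sum_{0\le j<N}b_j(x,y)+r_N(x,y),$$
where each amplitude $b_j$ carries the power of $h$ (and of $Q$) dictated by the Hankel expansion, is smooth in $(x,y)$, is built from the algebraic factors $\rho^{-\frac{d-1}{2}-j}$, the Jacobian $\sqrt{\det g}$, and (in the $X$-case) the bounded factor $\nabla_z\rho$, and satisfies $|\partial_x^\beta\partial_y^{\beta'}b_j(x,y)|\le C_{j,\beta,\beta'}R^{-\frac{d-1}{2}-j-|\beta|-|\beta'|}$, since every derivative landing on an algebraic factor costs a factor $\rho^{-1}\le R^{-1}$; the remainder $r_N$ and all its derivatives are $\mathcal{O}((h/R)^{N})$. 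Differentiating the single layer in the ambient variable, as in the $X$-case, simply inserts an extra factor $\mathrm{i}\kappa Q\,\langle X,\nabla_z\rho\rangle$ into the kernel, and tracking this single additional $\kappa Q$ through the reduction below is exactly what turns the $R$-weight $R^{-\frac{d-3}{2}-|\alpha|}$ into $R^{-\frac{d-1}{2}-|\alpha|}$.

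Next I would realize each oscillatory term $e^{\mathrm{i}Q\rho/h}b(x,y)$ as a semiclassical $\Phi\mathrm{SO}$ on $\partial D$. Using the atlas $\{U_i,F_i\}$ and the partition of unity $\{\psi_i^2\}$ of Section~\ref{sect:2}, in a single chart the symbol is $a(x,\xi)=\int e^{-\mathrm{i}\langle x-y,\xi\rangle/h}e^{\mathrm{i}Q\rho(x,y)/h}b(x,y)\,dy$, with phase $\Theta(x,y,\xi)=Q\rho(x,y)-\langle x-y,\xi\rangle$; in geodesic normal coordinates its tangential $y$-gradient is $\nabla_y\Theta=\xi-Q\big(\widehat{F_R(x)-y}\big)^{\mathrm{tan}}$, which vanishes only on $|\xi|<Q$ (strictly, since $F_R(x)$ is interior to $D$). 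On the cone $|\xi|\gtrsim Q$ there is no critical point, and repeated integration by parts in $y$ along $\nabla_y\Theta$ yields a contribution that is $\mathcal{O}(h^\infty)$ with arbitrary polynomial decay in $|\xi|$. On $|\xi|<Q$ one applies $(d-1)$-dimensional stationary phase at the critical point(s) $y_c(x,\xi)$: the Hessian in $y$ of $\Theta$ — up to bounded, $R$-independent geometric corrections it is $Q\,\mathrm{Hess}_y\rho$ — is non-degenerate of size $\gtrsim Q/R$ because $F_R$ is a diffeomorphism and $\rho(x,y_c)\ge R$, so one obtains the Gaussian prefactor $(2\pi h)^{(d-1)/2}|\det\mathrm{Hess}_y\Theta(y_c)|^{-1/2}$ together with a full asymptotic series in $h$. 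Collecting the contributions of all $j$ and all orders of the stationary-phase expansion by their total power of $h$ gives the stated decompositions, with $\widetilde{\mathcal{S}^{\kappa Q}}_{\alpha,-d-1-|\alpha|}$, resp. $\widetilde{(X\circ\mathcal{S}^{\kappa Q})}_{\alpha,-d-1-|\alpha|}$, the $\Phi\mathrm{SO}$'s so obtained; one then checks that their symbols lie in $\widetilde{S}^{-d-1-|\alpha|}(T^*(\partial D))$ — the $|\xi|$-decay coming from integration by parts, the $R$-weights from the amplitude bounds and the Hessian-determinant bound above — and that the series reproduces the original operator modulo $\mathcal{O}(h^\infty)$.

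I expect the heart of the matter, and the main obstacle, to be the uniform-in-$R$ (and uniform-in-$h$) control of this reduction: one must show $\mathrm{Hess}_y\rho(y_c)$ is invertible for every $\xi$ in the relevant cone with determinant comparable to $R^{-(d-1)}$ up to $R$-independent geometric factors (the second fundamental form of $\partial D$, $\det g$, $DF_R$ and its inverse), including a separate Airy-type treatment near the glancing set $|\xi|\approx Q$ where $y_c$ degenerates, and then carry out the bookkeeping that merges the two nested asymptotic expansions — in the Hankel index $j$ and in the stationary-phase order — into the single $\alpha$-indexed series with exactly the asserted orders and weights. A secondary technical point is the case split for $X$ tangential versus normal to $\Gamma_R$ at $F_R(x)$, and the verification that the normals $\nu_{F_R(x)}$ and $\nu_y$ enter only through uniformly bounded, $R$-independent quantities.
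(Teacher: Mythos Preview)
Your approach via oscillatory-integral representation and stationary phase differs from the paper's. The paper exploits the real-analyticity of $G_\kappa$ away from its singularity to Taylor-expand the kernel in the ambient variable $y$ about $y=x$:
\[
G_{\kappa Q}\big(F_R(x)-y\big)=(\kappa Q)^{d-2}\sum_{\alpha\ge 0}\frac{1}{\alpha!}\big[(\partial^\alpha G_1)\big(\kappa Q(F_R(x)-x)\big)\big](\kappa Q)^{|\alpha|}(y-x)^\alpha,
\]
and analogously for $X\circ G_{\kappa Q}$. Each coefficient is bounded via the large-argument Hankel asymptotic $H^{(1)}_{(d-2)/2}(z)\sim z^{-1/2}e^{\mathrm{i}z}$, which yields $\|(\partial^\alpha G_1)(\kappa Q(F_R(x)-x))\|_{L^\infty}\le C_d(\kappa R)^{-(d-3)/2-|\alpha|}$ (one extra factor $(\kappa R)^{-1}$ in the $X$-case); this is the sole source of the $R$-weights. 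Membership in $\Phi\mathrm{SO}^{-d-1-|\alpha|}_h$ is then verified by Beal's criterion, i.e.\ by bounding iterated commutators $\mathrm{ad}_{\ell_1(x,h\partial)}\circ\cdots\circ\mathrm{ad}_{\ell_N(x,h\partial)}$, rather than by computing the symbol through an oscillatory integral.

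What this buys the paper is precisely the list of obstacles you flag: there is no stationary-phase step, hence no Hessian of $\rho$ to invert, no glancing region $|\xi|\approx Q$ requiring a separate Airy-type analysis, and no bookkeeping needed to merge a Hankel expansion with a stationary-phase expansion into a single $\alpha$-indexed series. The entire $h$-oscillation $e^{\mathrm{i}\kappa Q|F_R(x)-x|}$ is absorbed into the $x$-dependent Taylor coefficients, and Beal's criterion handles it because each $h\partial_x$ landing on such a coefficient produces, via the chain rule, a factor that is $O(1)$ in $h$. Your route is plausible in outline but passes through genuine FIO machinery where the paper needs only a Taylor expansion and a commutator estimate.
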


\begin{proof}
First, noting that $G(x-y)$ is real analytic outside the region $|x-y| \geq R$, together with the fact that
\[
H^{(1)}_{\frac{d-2}{2}}(x) \sim \sqrt{ \frac{2}{\pi x} } e^{\mathrm{i} \left(x - \frac{2d-5}{4} \pi \right)} \quad \mbox{as $|x| \rightarrow \infty$,}
\]
we have that
\begin{eqnarray}
G_\kappa (F_R(x)-y) &=& \kappa^{d-2} \sum_{\alpha \geq 0} \frac{1}{\alpha !} \partial^{\alpha}_z \left[ G_1 (\kappa( F_R(x)-z ) ) \right] \mid_{z=x}  (y - x)^{\alpha}\nonumber \\
&=& \kappa^{d-2} \sum_{\alpha \geq 0} \frac{1}{\alpha !}  \left[  (\partial^{\alpha} G_1) (\kappa( F_R(x)-x ) ) \right]   \kappa^\alpha (y - x)^{\alpha}\label{eq:b1}
\end{eqnarray}
where for $\kappa\gg 1$, we have
\begin{equation}\label{eq:b2}
\| (\partial^{\alpha} G_1) (\kappa( F_R(x)-x ) )  \|_{L^\infty} \leq C_d   \kappa^{- \frac{d-3}{2} - |\alpha|} R^{- \frac{d-3}{2} - |\alpha|}.
\end{equation}
{Similarly, we have
\begin{eqnarray}
&&( X_{F_R(x)} \circ G_\kappa ) (F_R(x)-y)\nonumber\\
 &=& - \kappa^{d-2} \sum_{\alpha \geq 0} \sum_{i=1}^d  \frac{1}{\alpha !} \partial^{\alpha}_z \left[ X_{i} (F_R (x)) \partial_{z_i}  \left[ G_1 (\kappa ( F_R(x)-z ) ) \right] \right] \mid_{z=x}  (y - x)^{\alpha}\label{eq:b3} \\
&=& - \kappa^{d-1} \sum_{\alpha \geq 0}  \sum_{i=1}^d \frac{1}{\alpha !} X_{i} (F_R (x)) \left[  ( \partial_{i} \partial^{\alpha} G_1) (\kappa( F_R(x)-x ) ) \right]   \kappa^\alpha (y - x)^{\alpha},\nonumber
\end{eqnarray}
}where for $\kappa \gg 1$, we have
\begin{equation}\label{eq:b4}
\| (\partial_i \partial^{\alpha} G_1) (\kappa( F_R(x)-x ) )  \|_{L^\infty} \leq C_d   \kappa^{- \frac{d-1}{2} - |\alpha|} R^{- \frac{d-1}{2} - |\alpha|}.
\end{equation}
Using \eqref{eq:b1}--\eqref{eq:b4}, one can show that
\[
\widetilde{\mathcal{S}^{\kappa Q}_{\partial D}}  = Q^{\frac{d-1}{2}}  \sum_{\alpha \geq 0}  h^{\frac{d-1}{2} + |\alpha|}  \widetilde{\mathcal{S}^{\kappa Q}}_{\alpha, -d-1-|\alpha|} \,, \quad
\widetilde{ X \circ \mathcal{S}^{\kappa Q}_{\partial D}}  = Q^{\frac{d-1}{2}}  \sum_{\alpha \geq 0}  h^{\frac{d-1}{2} + |\alpha|}  \widetilde{ ( X \circ  \mathcal{S}^{\kappa Q})}_{\alpha, -d-1-|\alpha|} ,
\]
where $\widetilde{\mathcal{S}^{\kappa Q}_{\alpha, -d-1-|\alpha|}}, \widetilde{ ( X \circ  \mathcal{S}^{\kappa Q})}_{\alpha, -d-1-|\alpha|}  \in \Phi \text{SO}^{-d-1-|\alpha|}_h $ with $p_{\widetilde{\mathcal{S}^{\kappa Q}_{\alpha, -d-1-|\alpha|}} } ,  p_{ \widetilde{ ( X \circ  \mathcal{S}^{\kappa Q})}_{\alpha, -d-1-|\alpha|} } \in S^{-d-1-|\alpha|} (T^*(\partial D)) $, via checking with {\color{black} Beal's criterion in \cite{beal} (see also  \cite{Zworski} Theorem 9.12) with an application of commutators $\text{ad}_{\ell_i (x,h \partial)} $ over $\widetilde{\mathcal{S}^{\kappa Q}}_{\alpha, -d-1-|\alpha| }$, where $\ell_1,...,\ell_N \in S^1 (T^*(\partial D))$ are linear over $\xi$:}
{\beqnx
& &   \| \text{ad}_{\ell_1 (x,h \partial)} \circ \dots \circ \text{ad}_{\ell_N (x,h \partial)} \, \widetilde{\mathcal{S}^{\kappa Q}}_{\alpha, -d-1-|\alpha| } \|_{L^2 \rightarrow L^2 }\\
&& +  \| \text{ad}_{\ell_1 (x,h \partial)} \circ \dots \circ \text{ad}_{\ell_N (x,h \partial)} \, \widetilde{ ( X \circ  \mathcal{S}^{kQ})}_{\alpha, -d-1-|\alpha|}   \|_{L^2 \rightarrow L^2 }\\
& \leq  & C_1 \kappa^{ \frac{d-3}{2} + |\alpha|} \sum_{|\beta| = N} \|  \kappa^{-N} \partial_x^\beta [ (\partial^{\alpha} G_1) (\kappa( F_R(x)-x ) ) ] \|_{L^\infty}  \\
&   &+ C_2 \kappa^{ \frac{d-1}{2} + |\alpha|}   \sum_{|\beta| = N}   \sum_{i = 1}^d \|  \kappa^{-N} \partial_x^\beta [ X_i (F_R (x))  ( \partial_i \partial^{\alpha} G_1) (\kappa( F_R(x)-x ) ) ] \|_{L^\infty} \\
&\leq& C   h^{2 N} \left( R^{- \frac{d-3}{2} - |\alpha|}  \sum_{|\beta| = N} \| \partial^\beta_x F_R(x) \|^N  R^{- \frac{d-1}{2} - |\alpha|}  + \sum_{|\beta| = N+4} \| \partial^\beta_x F_R(x) \|^{2N} \| \partial^\beta_x X(x) \|^{N} \right ) \\
&=& O(  h^{2 N} ) \leq O(h^N) \,.
\eqnx
}{\color{black} Here we remark that $ \text{ad}_{\ell_i (x,h \partial)} (\cdot) := [ \ell_i (x,h \partial), (\cdot) ] = \ell_i (x,h \partial) (\cdot) - (\cdot)  \ell_i (x,h \partial) $ is the adjoint action of the operator $\ell_i (x,h \partial) \in \Phi \text{SO}_h^{1}$.}

The proof is complete.
\end{proof}

\section{Generalized/approximate transmission eigenvalue problem }\label{sect:3}

We first recall the integral formulation of the transmission eigenvalue problem \eqref{eq:tt1}--\eqref{eq:tt2}. Using the results derived in the previous section, we can show that
\begin{Lemma} \label{transmission_operator_equation}
{\color{black} Suppose $\kappa^2$ is not a Dirichlet eigenvalue of $D$.}
There exists a symmetric operator $\mathcal{A}^{\kappa,Q}_{\partial D}$ such that we have a non-trivial solution to \eqref{eq:operator_new} if and only if there exists a solution to the following (eigenvalue) problem for $\mathcal{A}^{\kappa,Q}_{\partial D}$:
\begin{equation}
0 =  \left( \mathcal{A}^{\kappa,Q}_{\partial D}  -1 \right)  [\phi] ,\quad \phi\in L^2(\partial D),
\label{Eigenpair}
\end{equation}
where
{\color{black}
\begin{equation}
\mathcal{A}^{\kappa,Q}_{\partial D}  =
\mathcal{A}^{\kappa}_{-2} + h  \mathcal{A}^{\kappa}_{-1,1} + h^2 \mathcal{A}^{\kappa}_{-1,2},
\label{Decomposition}
\end{equation}
with $ \mathcal{A}^{\kappa}_{-2} \in \Phi\mathrm{SO}^{-2}_h $ and for $l = 1,2$, $ \mathcal{A}^{\kappa}_{-1,l} \in \Phi\mathrm{SO}^{-1}_h $,} and in the geodesic normal coordinate, we have
\beqn
p_{\mathcal{A}^{\kappa}_{-2} } (x,\xi) & = & {1  -  \bigg| \frac{ 2  |\xi|^2  }{\left( Q^2 -1 \right) } \left( \frac{ \sqrt{1 - |\xi|^2 } }{ \sqrt{Q^2 - |\xi|^2 } }  - 1 \right)  \bigg|^2  } \,, \label{Hamiltonian} \\
p_{ \mathcal{A}^{\kappa}_{-1,1} } (x,\xi) & = & \, \Re \left( 
  \frac{  4 i C_0 |\xi|^2 \langle  A(x) \xi, \xi \rangle  }{\left( Q^2 -1 \right)  \left( 1 - |\xi|^2 \right) \left( Q^2 - |\xi|^2 \right)^{\frac{3}{2}}  } \left( \frac{ \sqrt{1 - |\xi|^2 } }{ \sqrt{Q^2 - |\xi|^2 } }  - 1 \right) 
 \right)  \label{Remainder} 
\eqn
and whenever $|\xi| > \max\{1,Q\}$, we have
\beqnx
p_{\mathcal{A}^{\kappa}_{-2} } (x,\xi) 
&=&  - \left(  \frac{ 3 Q^{2}  + 1 }{ 2 }  \right)  |\xi |^{-2}    + \mathcal{O}( | \xi |^{-4}  ) \,. \\
p_{ \mathcal{A}^{\kappa}_{-1,1} } (x,\xi) & = & - \, \Re \left(  
 \,    C_0  \sum_{i,j=1}^{d-1}  A_{ij} (x) \,  |\xi|^2 \partial_i \partial_j |\xi |^{-1}  \right)  +  \mathcal{O}(  |\xi|^{-3}  )  \\
p_{ \mathcal{A}^{\kappa}_{-1,2} } (x,\xi) & = &   \mathcal{O}(  |\xi|^{-1}   )   \,.
\eqnx
Here, $\mathcal{A}^{\kappa}_{-2} $ is the principal part of the operator $\mathcal{A}^{\kappa,Q}_{\partial D} $ and $\mathcal{A}^{\kappa,Q}_{\partial D}  \leq 1 $.
\end{Lemma}

\begin{proof}
Using Lemma~\ref{lemma_SK}, {\color{black} and the fact that $\kappa^2$ is not a Dirichlet eigenvalue of $D$,} we can calculate in the geodesic normal coordinate that
{\color{black} whenever $|\xi| > \max\{1,Q\}$, we have
\begin{eqnarray}
&  & p_{  \left[ \mathcal{S}^{\kappa}_{\partial D}   \right]^{-1}   \mathcal{S}^{\kappa Q}_{\partial D}  }(x,\xi)\nonumber \\
& = & \left(   1  - \frac{  \widehat{\widehat{C}}_{d} }{  \widehat{C}_{d} } |\xi |^{-2}  - \frac{ \widehat{\widehat{\widehat{C}}}_{d} \widehat{C}_{d}  -  \widehat{\widehat{C}}_{d}^2 }{ \widehat{C}_{d} ^2}  |\xi |^{-4} + \mathcal{O}( | \xi |^{-6}  )  \right) \nonumber \\
&  &  \times  \left(  1 + \frac{  \widehat{\widehat{C}}_{d}  }{ \widehat{C}_{d}  } Q^{2} |\xi |^{-2} + \frac{\widehat{\widehat{\widehat{C}}}_{d} }{ \widehat{C}_{d}  } Q^{4} |\xi |^{-4}  + O( | \xi |^{-6}  )   \right) \nonumber \\
&   & + h^2  \bigg( 1  + \mathcal{O}( | \xi |^{-2}  )  \bigg)  \left( \frac{ |\xi |  p_{ \mathcal{S}^{\kappa}_{-1}}(x,\xi)  }{\widehat{C}_{d} }  \right)  \left( 1 -   \bigg(1 + O( | \xi |^{-2}  )   \bigg) \bigg( 1  + \mathcal{O}( | \xi |^{-2}  )  \bigg)  \right) \nonumber \\
&  & + \mathcal{O}( h^2 | \xi |^{-3}  )   \label{eq:cd1} \\
& = &  1 + \frac{  \widehat{\widehat{C}}_{d} }{  \widehat{C}_{d}  } (Q^{2} -1) |\xi |^{-2}  
+ \frac{ \left(  \widehat{\widehat{\widehat{C}}}_{d} \widehat{C}_{d} (Q^2 +1)  -  \widehat{\widehat{C}}_{d} ^2 \right) ( Q^{2} -1)  }{ \widehat{C}_{d} ^2}  |\xi |^{-4} + \mathcal{O}( | \xi |^{-6}  ) +  \mathcal{O}( h^2 | \xi |^{-3}  ) \,,\nonumber 
\end{eqnarray}
where we have used the fact that $p_{ \mathcal{S}^{\kappa Q}_{-3}}(x,\xi)  - p_{ \mathcal{S}^{\kappa }_{-3}}(x,\xi) = \mathcal{O}(  | \xi |^{-4}  )   $ in the first line, and  $p_{ \mathcal{S}^{\kappa }_{-3}}(x,\xi)  =  \mathcal{O}(  | \xi |^{-3}  )  $ in the second line.
This then yields that, whenever $|\xi| > \max\{1,Q\}$, we have
\beqn
&  & p_{  ( - \f{1}{2} I + {\mathcal{K}^{\kappa Q}_{\partial D}}^* ) - ( - \f{1}{2} I + {\mathcal{K}^{\kappa}_{\partial D}}^* )  \left[ \mathcal{S}^{\kappa}_{\partial D}   \right]^{-1}  \mathcal{S}^{\kappa Q}_{\partial D}   } (x,\xi)\nonumber \\
&=& - \frac{1 }{2} +  \frac{ h \, }{2} \bigg( \widehat{C}_{d}   \sum_{i,j=1}^{d-1} A_{ij}(x) \partial_i \partial_j |\xi |  +   \widehat{\widehat{C}}_{d}  Q^{2} \sum_{i,j=1}^{d-1}  A_{ij} (x)\partial_i \partial_j |\xi |^{-1}  + \mathcal{O}( | \xi |^{-5}  ) \bigg)    \nonumber\\
&&   + \left(  \frac{1}{2}  -  \frac{ h }{2}  \bigg(  \widehat{C}_{d}  \sum_{i,j=1}^{d-1} A_{ij}(x) \partial_i \partial_j |\xi|  +  \widehat{\widehat{C}}_{d}  \sum_{i,j=1}^{d-1}  A_{ij} (x)\partial_i \partial_j |\xi |^{-1} + O( | \xi |^{-5}  ) \bigg)    \right)\nonumber \\
&  &  \times \left( 
1 + \frac{  \widehat{\widehat{C}}_{d} }{  \widehat{C}_{d}  } (Q^{2} -1) |\xi |^{-2}  
+ \frac{ \left(  \widehat{\widehat{\widehat{C}}}_{d} \widehat{C}_{d} (Q^2 +1)  -  \widehat{\widehat{C}}_{d} ^2 \right) ( Q^{2} -1)  }{ \widehat{C}_{d} ^2}  |\xi |^{-4} + \mathcal{O}( | \xi |^{-6}  )   
 \right)\nonumber \\
&   &  + \frac{h^2}{2}  \left( p_{ \mathcal{K}^{\kappa Q}_{-2}}(x,\xi) -  p_{ \mathcal{K}^{\kappa }_{-2}}(x,\xi) \right)   +  \mathcal{O}( h^2 | \xi |^{-3}  )   \nonumber \\
&=&   \frac{  \widehat{\widehat{C}}_{d} }{  2 \widehat{C}_{d}  } (Q^{2} -1) |\xi |^{-2}  
+ \frac{ \left(  \widehat{\widehat{\widehat{C}}}_{d} \widehat{C}_{d} (Q^2 +1)  -  \widehat{\widehat{C}}_{d} ^2 \right) ( Q^{2} -1)  }{ 2 \widehat{C}_{d} ^2}  |\xi |^{-4} + \mathcal{O}( | \xi |^{-6}  )\nonumber  \\
&  & - \frac{ h}{2}  ( Q^{2} -  1)  \bigg( \widehat{\widehat{C}}_{d}   \sum_{i,j=1}^{d-1}  A_{ij} (x)\partial_i \partial_j |\xi |^{-1}   + O(  |\xi|^{-5}  )  \bigg) + \mathcal{O}(h^2 |\xi|^{-3})   \label{eq:cd2} \, ,
\eqn
where we have used the fact that $ p_{ \mathcal{K}^{\kappa Q}_{-2}}(x,\xi) -  p_{ \mathcal{K}^{\kappa }_{-2}}(x,\xi)  = \mathcal{O}(  | \xi |^{-3}  )   $ in the last line.
Hence, whenever $|\xi| > \max\{1,Q\}$,
\beqn
&  & p_{  \frac{2  \widehat{C}_{d}  }{  \widehat{\widehat{C}}_{d}  }  ( Q^{2} -  1)^{-1}  \left( - h^2 \Delta_{\partial D} + h^2 \right) \left(  ( - \f{1}{2} I + {\mathcal{K}^{\kappa Q}_{\partial D}}^* ) - ( - \f{1}{2} I + {\mathcal{K}^{\kappa}_{\partial D}}^* )  \left[ \mathcal{S}^{\kappa}_{\partial D}   \right]^{-1}  \mathcal{S}^{\kappa Q}_{\partial D}  \right)  } (x,\xi) \nonumber\\
&=& 1 + \left( \frac{   \widehat{\widehat{\widehat{C}}}_{d} \widehat{C}_{d} (Q^2 +1)  -  \widehat{\widehat{C}}_{d} ^2 }{ \widehat{C}_{d}   \widehat{\widehat{C}}_{d}   } \right) |\xi |^{-2} + \mathcal{O}( | \xi |^{-4}  ) \nonumber \\
&  & - h \,   \bigg(  C_{d}  \sum_{i,j=1}^{d-1}  A_{ij} (x) \,  |\xi|^2 \partial_i \partial_j |\xi |^{-1}    + \mathcal{O}(  |\xi|^{-3}  )  \bigg) + \mathcal{O}(h^2  |\xi|^{-1} )  \label{eq:cd3} \\
&=&   1  +   \left(  \frac{ 3 Q^{2}  + 1 }{ 4 }  \right)  |\xi |^{-2} + O( | \xi |^{-4}  ) - h \,   \bigg(  C_0  \sum_{i,j=1}^{d-1}  A_{ij} (x) \,  |\xi|^2 \partial_i \partial_j |\xi |^{-1}    + \mathcal{O}(  |\xi|^{-3}  )  \bigg) + \mathcal{O}(h^2  |\xi|^{-1} ) , \nonumber
\eqn
where we obtained the last line by recalling that
\begin{equation*}
 \widehat{C}_d = C_0 \,,\, \quad   \widehat{\widehat{C}}_d  = \frac{C_0}{2 }  \,,\, \quad \widehat{\widehat{\widehat{C}}}_{d} = \frac{3 C_0}{8 }  \,.
\end{equation*}
}

By tracing the computations in \eqref{eq:cd1}--\eqref{eq:cd3}, and their original expressinon in \eqref{eq:dec1}- \eqref{eq:dec7}
we can obtain a more precise expression of terms in the last expansion in \eqref{eq:cd3} whenever $|\xi| > \max\{1,Q\}$ as follows:
\begin{eqnarray}
 1  +   \left(  \frac{ 3 Q^{2} + 1 }{ 4 }  \right) |\xi |^{-2} + O( | \xi |^{-4}  )   & =  &
 \frac{ 2  |\xi|^2  }{\left( Q^2 -1 \right) } \left( \frac{ \sqrt{1 - |\xi|^2 } }{ \sqrt{Q^2 - |\xi|^2 } }  - 1 \right)  \label{eq:gg3} \\
 C_0  \sum_{i,j=1}^{d-1}  A_{ij} (x) \,  |\xi|^2 \partial_i \partial_j |\xi |^{-1}    + \mathcal{O}(  |\xi|^{-3}  ) & = & - \frac{ i C_0  |\xi|^2  \langle  A(x) \xi, \xi \rangle   }{\left( 1 - |\xi|^2 \right) \left( Q^2 - |\xi|^2 \right)^{\frac{3}{2}}  }  \label{eq:gg4} 
\end{eqnarray}
where the terms at the right handside are now the corresponding symbols for all $\xi$ with $|\xi| \neq 1,Q$.

{\color{black}
Next we denote
\[
\mathcal{B}^{\kappa,Q}_{\partial D} := I -  \frac{2  \widehat{C}_{d}  }{  \widehat{\widehat{C}}_{d}  }   ( Q^{2} -  1)^{-1}  \left( - h^2 \Delta_{\partial D} + h^2  \right) \left(  ( - \f{1}{2} I + {\mathcal{K}^{\kappa Q}_{\partial D}}^* ) - ( - \f{1}{2} I + {\mathcal{K}^{\kappa}_{\partial D}}^* )  \left[ \mathcal{S}^{\kappa}_{\partial D}   \right]^{-1}  \mathcal{S}^{\kappa Q}_{\partial D}  \right) \,, 
\]
and notice that $( - \Delta_{\partial D} + 1  ) f = 0$ if and only if $f = 0$.  Hence, we have that whenever $h^{-2}$ is not a Dirichlet eigenvalue of $D$, there exists a non-trivial solution to \eqref{eq:operator_new} if and only if there exists a solution to
\begin{equation}\label{eq:new1}
0 =  \left( \mathcal{B}^{\kappa,Q}_{\partial D}  - 1  \right)  [\phi] \quad \text{ with }\quad
\mathcal{B}^{\kappa,Q}_{\partial D}  = \mathcal{B}^{\kappa}_{-2} + h \mathcal{B}^{\kappa}_{-1,1}  +  h^2 \mathcal{B}^{\kappa}_{-1,2} ,
\end{equation}
where
\begin{eqnarray}
\mathcal{B}^{\kappa}_{-2} &:=& \mathrm{Op}_{ \left( 1 -  \frac{ 2  |\xi|^2 }{\left(Q^2 -1\right)} \left( \frac{ \sqrt{1 - |\xi|^2 } }{ \sqrt{Q^2 - |\xi|^2 } }  - 1 \right)  \right)  ,h}   \in \Phi\mathrm{SO}^{-2}_h \;   \label{eq:new31}  \\ 
\mathcal{B}^{\kappa}_{-1,1} &:=& \mathrm{Op}_{  \frac{  i C_0 |\xi|^2   \langle A(x) \xi, \xi \rangle }{\left( 1 - |\xi|^2 \right) \left( Q^2 - |\xi|^2 \right)^{\frac{3}{2}}  }   ,h} \in \Phi\mathrm{SO}^{-1}_h   \label{eq:new32}\\
 \mathcal{B}^{\kappa}_{-1,2} &:=& h^{-2} \left( \mathcal{B}^{\kappa,Q}_{\partial D}  - \mathcal{B}^{\kappa}_{-2} - h  \mathcal{B}^{\kappa}_{-1,1} \right) \in \Phi\mathrm{SO}^{-1}_h  \label{eq:new3} \,,\,
\end{eqnarray}
}which in turn holds if and only if there exists a solution to
\begin{equation}\label{eq:new4}
0 = \left( \mathcal{B}^{\kappa,Q}_{\partial D}  - 1  \right)^*\left(\mathcal{B}^{\kappa,Q}_{\partial D}  -  1 \right)  [\phi] .
\end{equation}
Writing
\begin{equation}\label{eq:new5}
\mathcal{A}^{\kappa,Q}_{\partial D} :=  \mathcal{B}^{\kappa,Q}_{\partial D}  + \left( \mathcal{B}^{\kappa,Q}_{\partial D} \right)^*  - \left( \mathcal{B}^{\kappa,Q}_{\partial D} \right)^* \mathcal{B}^{\kappa,Q}_{\partial D} \,,
\end{equation}
which is symmetric, and noticing that
\begin{eqnarray}\label{eq:new6}
p_{\mathcal{A}^{\kappa,Q}_{\partial D} }(x,\xi) = 1 -  \left|  p_{\mathcal{B}^{\kappa}_{-1,1} }   (x,\xi) -1  \right|^2 + 2 h \, \Re \left(   \left(  p_{\mathcal{B}^{\kappa}_{2}  } (x,\xi) -1 \right) \overline{ p_{\mathcal{B}^{\kappa}_{-1,1}  } (x,\xi) }   \right)   + \mathcal{O}( h^2 |\xi|^{-1} )  \,,  \quad 
\end{eqnarray}
one can readily get \eqref{Hamiltonian}-\eqref{Remainder}, and this completes the proof.

\end{proof}

{\color{black}
\begin{Corollary} \label{transmission_operator_corollary}
 {\color{black} Fix $\kappa>0$. Suppose $\kappa^2$ is not a Dirichlet eigenvalue of $D$,} then $\mathcal{A}^{\kappa,Q}_{\partial D}$ is a compact self-adjoint operator in $L^2(\partial D, d \sigma)$.
\end{Corollary}

\begin{proof}
 {\color{black} Whenever $\kappa^2$ is not a Dirichlet eigenvalue of $D$,}
{\color{black} it is evident that the operator $  ( - \f{1}{2} I + {\mathcal{K}^{\kappa Q}_{\partial D}}^* ) - ( - \f{1}{2} I + {\mathcal{K}^{\kappa}_{\partial D}}^* )  \left[ \mathcal{S}^{\kappa}_{\partial D}   \right]^{-1}  \mathcal{S}^{\kappa Q}_{\partial D}   $ is bounded from $L^2(\partial D, d \sigma)$ to  $H^{-1}(\partial D, d \sigma)$,} and therefore $\mathcal{A}^{\kappa,Q}_{\partial D}$ is bounded from $L^2(\partial D, d \sigma)$ to $H^{-3}(\partial D, d \sigma)$.
Now we consider $\chi_Q \in C^{\infty}(T^*(\partial D))$ to be such that in the geodesic normal coordinate,
\begin{eqnarray*}
\begin{cases}
\chi_Q(x,\xi) = 1  & \text{ when } |\xi| \leq 2Q \,, \\
0 \leq \chi_Q(x,\xi) \leq 1  & \text{ when } 2Q \leq |\xi| \leq 4Q \,, \\
\chi_Q(x,\xi) = 0  & \text{ when } 4 Q \leq |\xi| \,. \\
\end{cases}
\end{eqnarray*}
Then we have that $\chi_Q \in S^{-\infty} (T^*(\partial D))$ is in the Hormander symbol class and hence for each $\kappa$, $\mathrm{Op}_{\chi_Q,h}$ is bounded from $H^{-3}(\partial D, d \sigma)$ to $H^{1}(\partial D, d \sigma)$ (as well as from $L^2(\partial D, d \sigma)$ to $L^2(\partial D, d \sigma)$.)    
Therefore $\mathrm{Op}_{\chi_Q,h} \, \mathcal{A}^{\kappa,Q}_{\partial D} \,\mathrm{Op}^*_{\chi_Q,h} $ is bounded from $L^2(\partial D, d \sigma)$ to $H^{1}(\partial D, d \sigma)$, and is therefore compact in $L^2(\partial D, d \sigma)$.
Now, it is clear that 
\begin{eqnarray}
\mathcal{A}^{\kappa,Q}_{\partial D} := \mathrm{Op}_{\chi_Q,h} \, \mathcal{A}^{\kappa,Q}_{\partial D} \, \mathrm{Op}^*_{\chi_Q,h}  + \mathcal{E}^{\kappa,Q}_{\partial D} \,,
\label{error_difference}
\end{eqnarray}
where the operator $\mathcal{E}^{\kappa,Q}_{\partial D}$ is such that
\begin{eqnarray*}
p_{\mathcal{E}^{\kappa,Q}_{\partial D} } 
&=& \left( 1 -  \chi_Q^2 \right) p_{\mathcal{A}^{\kappa}_{-2} }   + h \left( 1 -  \chi_Q^2 \right)  p_{\mathcal{A}^{\kappa}_{-1,1} }   \\
& & + h^2 \left( \left( 1 -  \chi_Q^2 \right)  p_{\mathcal{A}^{\kappa}_{-1,2} }  + \langle \partial_\xi p_{A,-2} \, , \, \partial^2_{xx} \, \chi_Q \partial_\xi \chi_Q \rangle \right) + \mathcal{O}( h^3 |\xi|^{-2}  )
\end{eqnarray*}
with 
$  ( 1 -  \chi_Q^2 ) p_{\mathcal{A}^{\kappa}_{-2} }    \in  S^{-2} (T^*(\partial D)) $, $  ( 1 -  \chi_Q^2 ) p_{\mathcal{A}^{\kappa}_{-1,1} }    \in  S^{-1} (T^*(\partial D)) $ and  $  \left( 1 -  \chi_Q^2 \right)  p_{\mathcal{A}^{\kappa}_{-1,2} }  + \langle \partial_\xi p_{A,-2} \, , \, \partial^2_{xx} \, \chi_Q \partial_\xi \chi_Q \rangle    \in  S^{-1} (T^*(\partial D)) $  being all now in the H\"ormander symbol class.
Therefore $\mathcal{E}^{\kappa,Q}_{\partial D}$ is bounded from $L^2(\partial D, d \sigma)$ to $H^{1}(\partial D, d \sigma)$, is hence also compact in $L^2(\partial D, d \sigma)$.
Now since $\mathrm{Op}_{\chi_Q,h} \, \mathcal{A}^{\kappa,Q}_{\partial D} \,\mathrm{Op}^*_{\chi_Q,h} $ and $\mathcal{E}^{\kappa,Q}_{\partial D}$ are both compact in $L^2(\partial D, d \sigma)$, the symmetric operator $\mathcal{A}^{\kappa,Q}_{\partial D}$ is a self-adjoint operator.
The proof is complete.
\end{proof}
}

\begin{Remark}\label{rem:i1}
Since $\mathcal{A}^{\kappa,Q}_{\partial D} $ is compact self-adjoint in $L^2(\partial D, d \sigma)$ {\color{black} for each $\kappa > 0$ such that $\kappa^2$ is not a Dirichlet eigenvalue of $D$,} we write a sequence of eigen-pairs $ \{ ( \lambda_j(h), \phi_j(h) ) \}_{j=1}^{\infty} $ where, fixing $h=\kappa^{-1} > 0$, the set of eigenfunctions form an orthonormal frame and the set of eigenvalues converges to zero as $j$ goes to infinity. It is easily seen that $\kappa$ is a transmission eigenvalue if and only if there exists $j$ such that $\lambda_j(\kappa^{-1}) = 1$. It can also be directly inferred from the boundedness of $\mathcal{A}^{\kappa,Q}_{\partial D} $ from $L^2(\partial D, d \sigma)$ to $H^{1}(\partial D, d \sigma)$ and the Sobolev embedding that $\phi_j(h) \in \mathcal{C}^{\infty} (\partial D)$.
\end{Remark}

Next, we introduce the definition of $\varepsilon$-almost transmission eigen-pairs, which for terminological convenience shall be referred to as the generalized transmission eigen-pairs in what follows. {\color{black} Let $\varepsilon$ be sufficiently small} and $0\leq\delta\leq\varepsilon$. Consider the following transmission eigenvalue problem:
\begin{equation}
 \left( \mathcal{A}^{\kappa,Q}_{\partial D}  -1 + \delta \right)  [\phi_\delta]=0,\quad \|\phi_\delta\|_{L^2(\partial D)}=1.
\label{almost_eigenpair}
\end{equation}
{\color{black} Suppose $\kappa^2$ is not a Dirichlet eigenvalue of $D$.} Set
\begin{equation}\label{eq:ab2}
\varphi_\delta =  \left(  \mathcal{S}^{\kappa}_{\partial D}  \right)^{-1} \mathcal{S}^{\kappa Q}_{\partial D} [\phi_\delta]\ \ \mbox{on}\ \ \partial D.
\end{equation}
One can directly verify that the pair  $(\phi_\delta, \varphi_\delta) \in L^2 (\partial D) \times L^2 (\partial D)$  satisfies
\begin{equation}\label{eq:ee1}
\begin{cases}
\mathcal{S}^{\kappa Q}_{\partial D} [\phi_\delta]  - \mathcal{S}^{\kappa}_{\partial D} [\varphi_\delta]  = 0 ,\medskip \\
{ \| ( - \f{1}{2} I + {\mathcal{K}^{\kappa Q}_{\partial D}}^* )[\phi_\delta] - ( - \f{1}{2} I + {\mathcal{K}^{\kappa}_{\partial D}}^* ) [\varphi_\delta] \|_{H^2}^2 \leq  \varepsilon \kappa^4  \,\frac{|\tilde{C}_{d}|^2}{   4 |C_{d}|^2  }  ( Q^{2} -  1)^{2} } \,.
\end{cases}
\end{equation}
Hence, if we let $u_\delta,v_\delta \in H^1(D)$ be defined as follows:
\beqn
u_\delta = \mathcal{S}^{\kappa Q}_{\partial D} [\phi_\delta] \,,\, \quad v_\delta = \mathcal{S}^{\kappa}_{\partial D} [\varphi_\delta]   & \text{ in } D.
\label{eq:almost_eigenfunctions}
\eqn
Then they approximately satisfy \eqref{eq:trans1} in a sense that
\beqn
\begin{cases}
& \Delta u_\delta+\kappa^2 Q^2 u_\delta=0\hspace*{.82cm}\mbox{in}\ \ D,\medskip\\
& \Delta v_\delta+\kappa^2 v_\delta=0 \hspace*{1.4cm}\mbox{in}\ \ D,\medskip\\
& u_\delta=v_\delta, \quad {  \| \partial_\nu  u_\delta - \partial_\nu v_\delta \|_{H^2(\partial D)}^2 \leq  \varepsilon \kappa^4  \,\frac{|\tilde{C}_{d}|^2}{   4 |C_{d}|^2  }  ( Q^{2} -  1)^{2} }     \quad\mbox{on}\ \ \partial D\, .
\end{cases}
\label{eq:trans2}
\eqn

%

\begin{Definition}
\label{almost_transmission}
{\color{black} Suppose $\kappa^2$ is not a Dirichlet eigenvalue of $D$.}  {\color{black} Given $\varepsilon > 0 $ sufficiently small,} we call {$\kappa > 0$} a generalised transmission eigenvalue {\color{black} (with respect to $\varepsilon$) } if for such $\kappa$, there exists $(\delta, \phi_\delta)$ with $0\leq \delta \leq \varepsilon$ such that \eqref{almost_eigenpair} holds.  The pair $(u_\delta,v_\delta)$ defined in \eqref{eq:almost_eigenfunctions} are called the $\varepsilon$-almost transmission eigenfunctions, in a sense that they satisfy \eqref{eq:trans2}. For simplicity, they are also referred to as the generalised transmission eigenfunctions.
We also denote the generalised transmission eigenspace as follows:
\begin{equation}\label{eq:gs1}
\begin{split}
\mathbb{E} (\kappa,\varepsilon) :=& \bigcup_{0\leq \delta \leq \varepsilon} \mathrm{Span}\bigg \{  \left( \mathcal{S}^{\kappa Q}_{\partial D} [\phi_\delta] , \mathcal{S}^{\kappa}_{\partial D} [\varphi_\delta] \right):\\
&\qquad\qquad \phi_\delta\ \mbox{and}\ \varphi_\delta \ \mbox{are given in \eqref{almost_eigenpair} and \eqref{eq:ab2} respectively} \bigg \}.
\end{split}
\end{equation}
We also refer to the following quantity as the multiplicity of a generalised transmission eigenvalue $\kappa$:
\beqn
\mathfrak{m}(\kappa,\varepsilon) := \mathrm{dim} \bigcup_{ 0\leq \delta \leq \varepsilon} \left \{ \phi \in L^2(\partial D) \,:\,  \left( \mathcal{A}^{\kappa,Q}_{\partial D}  -1 + \delta \right)  [\phi]  = 0 \right \} \,.
\label{Definition_m}
\eqn
\end{Definition}

\begin{Remark}\label{rem:def1}
 We notice that following this definition, a candidate $\phi_\delta$ that satisfies \eqref{almost_eigenpair} for some $0\leq \delta \leq \varepsilon$ will sit in the subspace
\[
\mathbb{V}_\varepsilon := \overline{ \bigcup \left \{  \mathbf{V} \subset L^2(\partial D) \,,\, \left \|  \left(\mathcal{B}^{\kappa,Q}_{\partial D}  -  1 \right)\big|_{\mathbf{V}} \right \|_{\mathcal{L}(L^2(\partial D), L^2(\partial D))} \leq \varepsilon \right\} },
\]
since it is immediate to check that \eqref{almost_eigenpair} implies
\beqnx
 \left\| \left(\mathcal{B}^{\kappa,Q}_{\partial D}  -  1 \right)  [\phi_\delta] \right\|^2_{L^2(\partial D)} = \delta \| \phi_\delta \|^2_{L^2}  \leq \varepsilon \,.
\eqnx
With the above interpretation, we realize that the concept of the generalised transmission eigenvalue in Definition~\ref{almost_transmission} is intimately related to the concept of pseudospectrum of the operator $\mathcal{B}^{\kappa,Q}_{\partial D}$.
\end{Remark}

\begin{Remark}\label{rem:def2}
{\color{black} Whenever $\kappa^2$ is not a Dirichlet eigenvalue of $D$,} we see from Corollary \ref{transmission_operator_corollary} that the eigenvalues of the operator $\mathcal{A}^{\kappa,Q}_{\partial D}$ are real and have a cluster point $0$, as an approximation to a finite rank operator. Hence, the union of all the eigenspaces for any fixed threshold away from $0$ is finite dimensional. In particular, $\mathbb{E}(\kappa,\varepsilon)$ in \eqref{eq:gs1} is finite dimensional and $\mathfrak{m}(\kappa,\varepsilon)$ is a finite number.
\end{Remark}

\begin{Remark}
It is easily seen that an exact transmission eigenvalue is a generalised transmission eigenvalue. Hence, the generalised eigenspace $\mathbb{E}(\kappa,\varepsilon)$ contains the exact transmission eigenfunctions. Indeed, $\mathbb{E}(\kappa, 0)$ is an exact transmission eigenspace. For the numerical finding in \cite{CDHLW} as well as the examples presented in Fig.~1, considering the numerical errors, they are actually certain generalised transmission eigenfunctions.
\end{Remark}

{
\begin{Remark}\label{rem:i0}
We notice that in fact a similar equation can be obtained if instead that $\kappa^2 Q^2$ is not a Dirichlet eigenvalue of $D$, and in this case we can consider
\begin{equation}\label{eq:tt22}
\phi = \left[  \mathcal{S}^{\kappa Q}_{\partial D}  \right]^{-1}   \mathcal{S}^{\kappa}_{\partial D}   [ \varphi ]   \, , \, \quad \left(  ( - \f{1}{2} I +  {\mathcal{K}^{\kappa}_{\partial D}}^* ) - ( - \f{1}{2} I +  {\mathcal{K}^{\kappa Q}_{\partial D}}^* ) \left[  \mathcal{S}^{\kappa Q}_{\partial D}  \right]^{-1}   \mathcal{S}^{\kappa}_{\partial D} \right)  [ \varphi]  = 0.
\end{equation}
Similarly, if instead $\kappa^2$ is not a Neumann eigenvalue, we can consider
\begin{equation}\label{eq:tt2n}
\varphi = \left[ - \f{1}{2} I + {\mathcal{K}^{\kappa}_{\partial D}}^*  \right]^{-1}    ( - \f{1}{2} I + {\mathcal{K}^{\kappa Q}_{\partial D}}^* )   [\phi]   \, , \, \left( \mathcal{S}^{\kappa}_{\partial D}  -  \mathcal{S}^{\kappa Q}_{\partial D}   \left[ - \f{1}{2} I + {\mathcal{K}^{\kappa}_{\partial D}}^*  \right]^{-1}   ( - \f{1}{2} I + {\mathcal{K}^{\kappa Q}_{\partial D}}^* )  \right)  [\phi]  = 0\,;
\end{equation}
and
if $\kappa^2 Q^2$ instead is not a Neumann eigenvalue, we can look into
\begin{equation}\label{eq:tt22n}
\phi = \left[ - \f{1}{2} I + {\mathcal{K}^{\kappa Q}_{\partial D}}^*  \right]^{-1} ( - \f{1}{2} I + {\mathcal{K}^{\kappa}_{\partial D}}^* )  [\varphi]   \, , \,  \left( \mathcal{S}^{\kappa Q}_{\partial D}  - \mathcal{S}^{\kappa}_{\partial D}  \left[  - \f{1}{2} I + {\mathcal{K}^{\kappa Q}_{\partial D}}^*  \right]^{-1} ( - \f{1}{2} I + {\mathcal{K}^{\kappa}_{\partial D}}^* )  \right)  [\varphi]  = 0. 
\end{equation}
It is noted that alternative definitions of generalized transmission eigenvalues in Definition \ref{almost_transmission} can be similarly given with \eqref{eq:tt22}, \eqref{eq:tt2n} or \eqref{eq:tt22n}, and the main results in Theorems \ref{thm:main1} and \ref{thm:main2} can be shown to hold by following completely similar proofs with either of these alternative definitions.

In view of the above, the only technical restriction of our results in this work comes from the situation when $\kappa^2$ and $\kappa^2 Q^2$ are simultaneously both the Dirichlet and Neumann eigenvalues of $D$, and yet at the same time $\kappa^2$ is a transmission eigenvalue of $D$, which generically does not occur. In fact, even if it occurs, due to the countability of the eigenvalues (for a fixed $D$), one can vary $Q$ (with uncountably many choices) to make $\kappa^2Q^2$ not a Dirichlet/Neumann eigenvalue so that our results hold. Moreover, to further corroborate this point, we refer to \cite{CDHLW} where the analysis evidently shows that for a radial domain, a $\kappa^2$ cannot even be simultaneously a Dirichlet and transmission eigenvalue. 
 In the rest of the paper, for the sake of exposition and clarity, we only focus on \eqref{eq:tt2} with the condition that $\kappa^2$ is not a Dirichlet eigenvalue of $D$.
\end{Remark}
}

\section{Surface concentration of generalised transmission eigenfunctions}

In this section, we present the main results on the surface concentration of the generalised transmission eigenfunctions. 
By Remark~\ref{rem:i1}, for any fixed $h=\kappa^{-1}>0$ {\color{black} with $\kappa^2$ not a Dirichlet eigenvalue of $D$}, we let
\begin{equation}\label{eq:ppn1}
\{ ( \lambda_j(h), \phi_j(h) ) \}_{j=1}^{\infty}\ \ \mbox{with}\ \ \lim_{j\rightarrow\infty}\lambda_j(h)=0\ \mbox{and}\ \|\phi_j(h)\|_{L^2(\partial D)}=1,
\end{equation}
be the sequence of eigen-pairs of $\mathcal{A}^{\kappa,Q}_{\partial D} $.  {\color{black} Whenever $\kappa^2$ is not a Dirichlet eigenvalue of $D$,} it is noted that $\kappa\in\mathbb{R}_+$ is a transmission eigenvalue if and only if $\lambda_{j_0}(h)=1$ for a certain $j_0\in\mathbb{N}$, or equivalently $\mathfrak{m}(\kappa, 0)>0$, whereas $1-\varepsilon\leq\lambda_{j_0}(h)\leq 1$ corresponds to a generalised transmission eigenvalue according to Definition~\ref{almost_transmission}. In what follows, we set
${\varphi}_j (h) := \left[ \mathcal{S}^{\kappa}_{\partial D}  \right]^{-1} \mathcal{S}^{\kappa Q}_{\partial D}  \, [ \phi_j (h) ] $ and write
{\color{black}
\beqn
u_j(h) := \mathcal{S}^{\kappa Q}_{\partial D}  \, [ \phi_j (h) ] \quad \text{ in } D \,, \, \quad   \partial_\nu u_j(h) =   ( - \f{1}{2} I + {\mathcal{K}^{\kappa Q}_{\partial D}}^* ) \, [ \phi_j (h) ] \quad \text{ on } \partial D\,; \label{def_u_i} \\
v_j(h) := \mathcal{S}^{\kappa}_{\partial D}  \, [ {\varphi}_j (h) ]  \quad \text{ in } D \,, \, \quad   \partial_\nu v_j(h) =   ( - \f{1}{2} I + {\mathcal{K}^{\kappa}_{\partial D}}^* ) \, [ {\varphi}_j (h) ] \quad \text{ on } \partial D \label{def_v_i} \, .
\eqn
}

\subsection{Surface concentration in an averaging sense}

The first main result of this section is stated as follows.

\begin{Theorem}\label{thm:main1}
{\color{black} 
Suppose that $D$ is a 
$C^\infty$ domain.
Given $ \varepsilon > 0$ sufficiently small,}
{\color{black} whenever $\kappa^{2}$ is not a Dirichlet eigenvalue of $D$,} let us define the averaging functionals $\mathscr{I}^\ell_\zeta(\kappa,x,\varepsilon)$, $\zeta=u, v$ and $\ell=0, 1$ as follows:
\beqn
\mathscr{I}_\zeta^0(\kappa,x,\varepsilon) &:=& \frac{ \sum \limits_{1-\varepsilon \leq  \lambda_j (\kappa^{-1}) \leq 1}   \Big|  \big[\zeta_j(\kappa^{-1})\big] (x) \Big|^2 }{   \# \big\{ 1-\varepsilon \leq  \lambda_j (\kappa^{-1}) \leq 1 \big\} },\quad \zeta=u\ \mbox{or}\ v;\medskip \label{index0}  \\
\mathscr{I}_{\zeta}^1(\kappa,x,\varepsilon) &:=& \frac{ \sum \limits_{1-\varepsilon \leq  \lambda_j (\kappa^{-1}) \leq 1}   \Big|  \nabla \big[\zeta_j(\kappa^{-1})\big] (x) \Big|^2 }{   \# \big\{ 1-\varepsilon \leq  \lambda_j (\kappa^{-1}) \leq 1 \big\} },\quad \zeta=u\ \mbox{or}\ v;  \label{index1},
\eqn
where $\#$ counts the number of elements of a given set. Let $\Gamma_R\subset D$ be a smooth closed surface such that $\mathrm{dist}(\Gamma_R, \partial D):=R\geq 0$. Then for any bump function $\gamma(x) \in \mathcal{C}^{\infty} (\Gamma_R) $, we have as $\kappa\rightarrow \infty$:
\beqn
\int_{\Gamma_R} \gamma(x) \mathscr{I}_{\zeta}^0(\kappa,x,\varepsilon)\, d \sigma(x) &\sim& { \kappa^{-2}}  \quad\hspace*{2.5cm} \text{ if }\ \mathrm{supp}(\gamma) \subset \partial D,\ \ \zeta=u, v;  \label{haha1} \\
\int_{\Gamma_R} \gamma(x) \mathscr{I}_{u}^0(\kappa,x,\varepsilon)\, d \sigma(x) &=& \mathcal{O}(Q^{d-1} R^{3-d} \kappa^{1-d}) \quad \text{ if }\ \mathrm{supp}(\gamma)\cap \partial D = \emptyset; \label{haha2} \\
\int_{\Gamma_R} \gamma(x) \mathscr{I}_{v}^0(\kappa,x,\varepsilon)\, d \sigma(x) &=& \mathcal{O}(R^{3-d} \kappa^{1-d}) \quad\hspace*{.85cm} \text{ if }\ \mathrm{supp}(\gamma) \cap \partial D = \emptyset  \,; \label{haha3}
\eqn
and
\beqn
\int_{\Gamma_R} \gamma(x) \mathscr{I}_{\zeta}^1(\kappa,x,\varepsilon)\, d \sigma(x) &\sim& 1  \quad\hspace*{2.9cm} \text{ if }\ \mathrm{supp}(\gamma) \subset \partial D,\ \ \zeta=u, v;  \label{haha4} \\
\int_{\Gamma_R} \gamma(x) \mathscr{I}_{u}^1(\kappa,x,\varepsilon)\, d \sigma(x) &=& \mathcal{O}(Q^{d-1} R^{1-d} \kappa^{1-d}) \quad \text{ if }\ \mathrm{supp}(\gamma)  \cap \partial D = \emptyset;  \label{haha5} \\
\int_{\Gamma_R} \gamma(x) \mathscr{I}_{v}^1(\kappa,x,\varepsilon)\, d \sigma(x) &=& \mathcal{O}(R^{1-d} \kappa^{1-d}) \quad\hspace*{.9cm} \text{ if }\ \mathrm{supp}(\gamma) \cap \partial D = \emptyset \,, \label{haha6}
\eqn
where the asymptotic constants in the RHS terms of the above relations depend on $\varepsilon$ and $\|\gamma\|_{C(\Gamma)}$.
\end{Theorem}

\begin{Remark}
By Theorem~\ref{thm:main1}, the surface concentration of the generalised transmission eigenfunctions can be observed as follows. By \eqref{haha1} and \eqref{haha4}, we readily see that, {\color{black} whenever $\kappa^2$ is not a Dirichlet eigenvalue of $D$,} the generalised transmission eigenfunctions $u$ and $v$ are highly oscillatory around $\partial D$ (in an averaging sense as described by the averaging functionals). Indeed, by multiplying a normalisation factor $\kappa^2$, namely considering $\kappa\cdot(u, v)$, we see that the gradient fields blow up as $\kappa\rightarrow \infty$. Considering $\kappa\cdot(u, v)$, and by \eqref{haha1}--\eqref{haha3} and \eqref{haha4}--\eqref{haha6}, we readily see that in particular when $d\geq 4$, $\kappa\cdot u(x)$ and $\kappa\cdot v(x)$ decay rapidly when $x$ leaves away from $\partial D$ (again in the averaging sense) for $\kappa$ sufficiently large. 
{Even in the case with $d=3$, we can also see that $\kappa\cdot u$ decays when leaving away from $\partial D$ if $0<Q<1$}. According to our discussion in Section~\ref{sect:3}, the generalised transmission eigenfunctions contain the exact transmission eigenfunctions as subsequences, and hence it is unobjectionable to claim that concentration property in Theorem~\ref{thm:main1} holds also for the exact transmission eigenfunctions (in an averaging sense).
%

\end{Remark}

We proceed to give the proof of Theorem~\ref{thm:main1}. First, we discuss the generalised Weyl's law, which shall be needed in our proof. To that end, we introduce the following Hamiltonian:
\begin{equation}\label{eq:hf1}
\mathcal{H}(x, \xi) :=  p_{\mathcal{A}^{\kappa}_{-2}  } (x,\xi): T^*(\partial D)  \rightarrow \mathbb{R} \,,
\end{equation}
{\color{black}
where we have explicitly that
\begin{equation}\label{eq:hf1.1}
\mathcal{H}(x, \xi) = \rho_Q \left( \| \xi \|_{g(x)}^2 \right) \,,
\end{equation}
with $\|\xi\|_g(x) := \sum_{ij}^{d-1} g^{ij}(x) \xi_i \xi_j$ under a general coordinate system, and $\rho_Q $ being defined as
\begin{eqnarray}\label{eq:hf1.2}
\rho_Q : [0, + \infty] &\rightarrow& [-\infty,1]; \\
\rho_Q (t) &=& 
\begin{cases}
1 - \frac{4 t^2 \left( Q^2 +1-2t - 2 \sqrt{(1-t)(Q^2-t)} \right) }{(Q^2-1)^2 (Q^2-t) }   & \text{if } t \leq  \min\{1,Q^2\} \text{ or } \max\{1,Q^2\} \leq t ,\\
1 - \frac{4 t^2}{(Q^2-1) (Q^2-t) }  & \text{if } \min\{1,Q^2\} \leq t \leq  \max\{1,Q^2\},
\end{cases}
\nonumber
\end{eqnarray}
is differentiable except at $t= 1,Q^2$, 
and we recall that
\begin{eqnarray*}\label{eq:hf1.3}
\rho_Q (t) &=& 1 -  \frac{4}{Q^2 (Q^2+1)} t^2 - \frac{4}{Q^4 (Q+1)} t^3  + \mathcal{O}( t^{4}  )  \quad  \text{ as } t \rightarrow 0^+  \,, \\
\rho_Q (t) &=& - \frac{3Q^2 + 1}{2} t^{-1} + \mathcal{O}( t^{-2}  )  \quad  \text{ as } t \rightarrow + \infty  \,.
\end{eqnarray*}
We remark that
\begin{eqnarray}\label{eq:hf1.4}
\mathcal{A}^{\kappa}_{-2}    = \rho_Q( - h^2 \Delta_{\partial D} )  \quad   (\text{mod} \, h \Phi \text{SO}^{-3}_h),
\end{eqnarray}
where $\rho_Q( - h^2 \Delta_{\partial D} )  $ is now defined via functional calculus over the operator $- h^2 \Delta_{\partial D}$ whenever $Q^2 \kappa^2$ is not an eigenvalue of $-  \Delta_{\partial D}$ on the surface $\partial D$.}

Now, we are ready to state the Weyl's law for our purpose.
{\color{black}
The precise statement for Weyl's law for the special case when the operator is the Laplace or Schrodinger operator is given in \cite{Zworski} , Theorem 15.3. Its form for our compact self-adjoint operator comes directly by following the proof of Theorem 15.3 and Theorems 14.8 to 14.10 in \cite{Zworski}, as well as by noticing $| p_{\mathcal{A}^{\kappa}_{-2} }(x,\xi)^{-1} + i | \geq C (1+|\xi|^2) $ for some $C > 0$, which gives the desired 
resolvent estimate when we utilize the Helffer-Sjostrand formula; see also \cite{ACL2} from Lemma 3.3 to Proposition 3.5 and \cite{ACLS} from Lemma 4.3 to Proposition 4.5 for more details.
A precise statement of pointwise Weyl's law for a general elliptic pseudo-differential operator can be found in \cite{Hor5} Theorem 5.1.}

\begin{Proposition} \cite{trace,erg1,Zel,erg3,erg2,Zworski,Hor5}\label{prop:w1}
Assume that $D$ is a 
$C^\infty$ domain
and {\color{black} $\kappa^{2}$ is not a Dirichlet eigenvalue of $D$.} {\color{black} Given $a \in S^m (T^*(\partial D))$,} fixing $\varepsilon > 0$ sufficiently small, we have as $h\rightarrow 0^+$,
\begin{equation}\label{weyl}
\begin{split}
& (2 \pi h )^{ (d-1)}  \sum_{1-\varepsilon \leq  \lambda_j (h) \leq 1} \big\langle  \mathrm{Op}_{a,h}  \, \phi_j(h) , \phi_j(h) \big\rangle_{L^2(\partial D)}\\
=&  \int_{ \{ 1-\varepsilon \leq \mathcal{H} \leq  1\} } a  \, d \sigma \otimes d \sigma^{-1} + o_{\varepsilon}(1),
\end{split}
\end{equation}
where $\phi_j(h)$'s are given in \eqref{eq:ppn1} and the little-$o$ depends on $\varepsilon$.
\end{Proposition}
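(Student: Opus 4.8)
The plan is to recognize \eqref{weyl} as a semiclassical Szeg\H{o}-type / local Weyl law for the spectral projector of the self-adjoint operator $\mathcal{A}^{\kappa,Q}_{\partial D}$ onto the window $[1-\varepsilon, 1]$, and to reduce it to the standard semiclassical Weyl law available in the literature (as cited by \cite{Hor1,Hor2,trace,erg1,Zel,erg3,erg2}). First I would introduce the semiclassical spectral projector $\Pi_h := \mathbf{1}_{[1-\varepsilon,1]}(\mathcal{A}^{\kappa,Q}_{\partial D})$, so that the left-hand side is exactly $(2\pi h)^{d-1}\,\mathrm{tr}\big(\Pi_h\,\mathrm{Op}_{a,h}\big)$. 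Since $\mathcal{A}^{\kappa,Q}_{\partial D}=\mathcal{A}^\kappa_{-2}+h\mathcal{A}^\kappa_{-3}$ with $\mathcal{A}^\kappa_{-2}\in\Phi\mathrm{SO}^{-2}_h$ by Lemma~\ref{transmission_operator_equation}, its principal symbol is $\mathcal{H}(x,\xi)=p_{\mathcal{A}^\kappa_{-2}}(x,\xi)$, and the $h$-subprincipal term does not affect the leading-order trace asymptotics. The key structural input is that $\mathcal{A}^{\kappa,Q}_{\partial D}$ is a compact self-adjoint $h$-pseudodifferential operator on the closed manifold $\partial D$ whose symbol decays like $1-c|\xi|^{-2}$, so the energy window $\{1-\varepsilon\le\mathcal{H}\le1\}$ is a bounded, smooth (for generic $\varepsilon$) region in $T^*(\partial D)$ — crucially compact, which is what makes both sides finite.

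The main steps, in order, would be: (i) reduce to the case $a$ compactly supported in $\xi$ by using the decay of $\mathcal{H}$; outside a large ball $\{|\xi|\le M_\varepsilon\}$ one has $\mathcal{H}<1-\varepsilon$, hence $\mathrm{Op}_{a,h}$ composed with $\Pi_h$ is negligible there up to $O(h^\infty)$ by pseudodifferential calculus and the non-stationary phase / elliptic parametrix construction; this reduces the problem to a genuinely semiclassical one on a compact piece of phase space. (ii) Apply the functional calculus for $h$-pseudodifferential operators (Helffer--Sj\"ostrand formula) to express $f(\mathcal{A}^{\kappa,Q}_{\partial D})$ as an $h$-$\Psi$DO with principal symbol $f(\mathcal{H}(x,\xi))$ for smooth $f$; approximate $\mathbf{1}_{[1-\varepsilon,1]}$ from above and below by smooth cutoffs $f_\pm$ agreeing with the indicator except on an $\eta$-neighbourhood of the endpoints $\{\mathcal{H}=1-\varepsilon\}$ and $\{\mathcal{H}=1\}$. (iii) For smooth $f$, the composition $f(\mathcal{A}^{\kappa,Q}_{\partial D})\,\mathrm{Op}_{a,h}$ is an $h$-$\Psi$DO of order $0$ with principal symbol $f(\mathcal{H})\,a$, and its trace obeys the standard semiclassical trace formula
\[
(2\pi h)^{d-1}\,\mathrm{tr}\big(f(\mathcal{A}^{\kappa,Q}_{\partial D})\,\mathrm{Op}_{a,h}\big) = \int_{T^*(\partial D)} f(\mathcal{H})\,a \; d\sigma\otimes d\sigma^{-1} + O(h),
\]
using that the symbol is compactly supported so the remainder is controlled. (iv) Let $\eta\to0$ after $h\to0$; the error from the smoothing is bounded by the symplectic volume of the $\eta$-collar of the boundary of $\{1-\varepsilon\le\mathcal{H}\le1\}$, which tends to $0$ with $\eta$ (here one uses that for fixed $\varepsilon>0$ the level sets $\{\mathcal{H}=1-\varepsilon\}$ and $\{\mathcal{H}=1\}$ are, for a.e.\ and in fact all relevant $\varepsilon$, regular and of measure zero — strict convexity of $D$ and the explicit form of $\mathcal{H}$ in \eqref{Hamiltonian} guarantee $\nabla\mathcal{H}\ne0$ on these sets). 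Sandwiching the indicator between $f_-$ and $f_+$ then yields \eqref{weyl} with an $\varepsilon$-dependent $o(1)$ error.

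The main obstacle I expect is \emph{step (iv)}, the passage from smooth cutoffs to the sharp indicator of the window $[1-\varepsilon,1]$ uniformly enough to get $o_\varepsilon(1)$ rather than merely $o_{\varepsilon,\eta}(1)$. This is the usual difficulty in sharp Weyl laws: one needs that the spectrum of $\mathcal{A}^{\kappa,Q}_{\partial D}$ does not clump near the endpoints $1-\varepsilon$ or $1$ faster than the classical volume predicts, i.e.\ a bound on the number of eigenvalues in an $\eta$-window of the form $O(\eta\,h^{-(d-1)})+o(h^{-(d-1)})$. For a genuine $h$-$\Psi$DO with non-degenerate energy surface this follows from the standard two-term (or even one-term with remainder) Weyl law, but here one must check the non-degeneracy $\{\mathcal{H}=1-\varepsilon\}\cap\{d\mathcal{H}=0\}=\emptyset$ and $\{\mathcal{H}=1\}\cap\{d\mathcal{H}=0\}=\emptyset$ directly from \eqref{Hamiltonian}; since $\mathcal{H}(x,\xi)=1-c(x)|\xi|^{-2}+\mathcal{O}(|\xi|^{-4})$ with $c(x)$ given explicitly and of one sign, $d\mathcal{H}$ is radial and nonzero on each level set for $|\xi|$ large, so this is routine but must be recorded. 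A secondary, more bookkeeping-type obstacle is verifying that the $h$-subprincipal part $h\mathcal{A}^\kappa_{-3}\in\Phi\mathrm{SO}^{-3}_h$ and the lower-order symbol corrections genuinely contribute only $O(h)$ to the trace; this is handled by the composition calculus for the classes $\Phi\mathrm{SO}^m_h$ together with the trace-norm estimate $\|\mathrm{Op}_{b,h}\|_{\mathrm{tr}}\lesssim h^{-(d-1)}\|b\|$ for $b$ compactly supported, which is where compactness of the effective phase-space region is used again.
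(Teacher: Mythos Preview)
The paper does not supply its own proof of this proposition: it is stated with citations to \cite{Hor1,Hor2,trace,erg1,Zel,erg3,erg2} and invoked as a known semiclassical local Weyl law, so there is no argument in the paper to compare against. Your outline is precisely the standard route by which such results are established in those references --- functional calculus (Helffer--Sj\"ostrand) to realise $f(\mathcal{A}^{\kappa,Q}_{\partial D})$ as an $h$-pseudodifferential operator with principal symbol $f\circ\mathcal{H}$, the semiclassical trace expansion for smooth $f$, and then a squeeze from smooth to sharp spectral cutoffs controlled by the Liouville volume of a thin collar of the energy shell --- and the plan is essentially sound.

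Two small corrections are worth recording. First, your asserted asymptotic $\mathcal{H}(x,\xi)=1-c(x)|\xi|^{-2}+\cdots$ is not what \eqref{Hamiltonian} says: since $\mathcal{A}^\kappa_{-2}\in\Phi\mathrm{SO}^{-2}_h$, its principal symbol obeys $\mathcal{H}(x,\xi)=c\,|\xi|^{-2}+\mathcal{O}(|\xi|^{-4})\to0$ as $|\xi|\to\infty$. The compactness of $\{1-\varepsilon\le\mathcal{H}\le1\}$ therefore still holds, but not by the mechanism you describe. Second, the non-degeneracy you propose to check at the upper endpoint actually fails as stated: from the form $\mathcal{H}=1-|B(x,\xi)|^2$ in \eqref{Hamiltonian} one has $\{\mathcal{H}=1\}=\{B=0\}\subset\{d\mathcal{H}=0\}$, so $1$ is a critical value. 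This does no damage, because Lemma~\ref{transmission_operator_equation} gives $\mathcal{A}^{\kappa,Q}_{\partial D}\le1$ and $\mathcal{H}\le1$, whence $\mathbf{1}_{[1-\varepsilon,1]}(\mathcal{A}^{\kappa,Q}_{\partial D})=\mathbf{1}_{[1-\varepsilon,\infty)}(\mathcal{A}^{\kappa,Q}_{\partial D})$ and only the lower level $\{\mathcal{H}=1-\varepsilon\}$ needs to be regular for the collar estimate in your step~(iv); that level is noncritical for small $\varepsilon>0$ by the explicit expansion in \eqref{Hamiltonian}.
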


\begin{Corollary} Assuming that $D$ is a
$C^\infty$ domain and {\color{black} $\kappa^{2}$ is not a Dirichlet eigenvalue of $D$}, and fixing $\varepsilon > 0$ sufficiently small, we have
\beqn
\mathfrak{m}(\kappa,\varepsilon) =  \# \{ 1-\varepsilon \leq  \lambda_j (h) \leq 1 \} =  (2 \pi  h)^{ 1-d } \,
  \left(  \int_{ \{ 1-\varepsilon \leq \mathcal{H} \leq  1\} }   \, d \sigma \otimes d \sigma^{-1} + o_{\varepsilon}(1)  \right)  \,.
\label{weyl2}
\eqn
{where $\mathfrak{m}(\kappa,\varepsilon)$ is defined as in \eqref{Definition_m}.}
Hence, for any given $\varepsilon > 0$ sufficiently small, there exits a $\kappa_\varepsilon$ such that any $\kappa > \kappa_\varepsilon$ {\color{black} with $\kappa^{2}$ being not a Dirichlet eigenvalue of $D$} is a generalised transmission eigenfunction with multiplicity $\mathfrak{m}(\kappa,\varepsilon) \sim \kappa^{d-1} $ according to Definition~\ref{almost_transmission}.
\end{Corollary}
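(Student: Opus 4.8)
The plan is to deduce the Corollary directly from Proposition~\ref{prop:w1} by specialising the observable to $a\equiv 1$. First I would note that $\mathfrak{m}(\kappa,\varepsilon)$, as defined in \eqref{Definition_m}, equals $\#\{\,1-\varepsilon\le\lambda_j(h)\le 1\,\}$, since the eigenfunctions $\phi_j(h)$ of the compact self-adjoint operator $\mathcal{A}^{\kappa,Q}_{\partial D}$ form an orthonormal basis of $L^2(\partial D)$ (Remark~\ref{rem:i1}), so the union over $0\le\delta\le\varepsilon$ of the kernels of $\mathcal{A}^{\kappa,Q}_{\partial D}-1+\delta$ is exactly the span of those $\phi_j(h)$ whose eigenvalue lies in $[1-\varepsilon,1]$; this union is finite-dimensional by Remark~\ref{rem:def2}. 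Taking $a\equiv 1\in S^0(T^*(\partial D))$ in \eqref{weyl}, the left-hand side becomes $(2\pi h)^{d-1}\sum_{1-\varepsilon\le\lambda_j(h)\le 1}\langle\phi_j(h),\phi_j(h)\rangle_{L^2(\partial D)}=(2\pi h)^{d-1}\,\#\{1-\varepsilon\le\lambda_j(h)\le 1\}$ by the normalisation $\|\phi_j(h)\|_{L^2(\partial D)}=1$, which gives \eqref{weyl2} after dividing by $(2\pi h)^{d-1}$.

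Second, I would address the asymptotic claim $\mathfrak{m}(\kappa,\varepsilon)\sim\kappa^{d-1}$. For this it suffices to show that the phase-space volume $\int_{\{1-\varepsilon\le\mathcal{H}\le 1\}} d\sigma\otimes d\sigma^{-1}$ is strictly positive and finite. By \eqref{Hamiltonian}, the Hamiltonian $\mathcal{H}(x,\xi)=p_{\mathcal{A}^\kappa_{-2}}(x,\xi)=1-\big|\tfrac{C_d}{\tilde C_d}\tfrac{(|\xi|^2+1)}{(Q^2-1)}\tfrac{p_{\mathcal{S}^{\kappa Q}_{-1}}(x,\xi)}{p_{\mathcal{S}^{\kappa}_{-1}}(x,\xi)}\big|^2$, which tends to $1$ as $|\xi|\to\infty$ (indeed $\mathcal{H}=1-\mathcal{O}(|\xi|^{-2})$, and moreover $\mathcal{H}<1$ since $\mathcal{A}^{\kappa,Q}_{\partial D}\le 1$). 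Hence the sublevel structure $\{1-\varepsilon\le\mathcal{H}\le 1\}$ is, fibrewise over each $x\in\partial D$, the complement of a bounded neighbourhood of $\xi=0$; since $\partial D$ is compact, the measure is finite, and since for each $x$ the set $\{|\xi|\ge C_\varepsilon(x)\}$ has infinite Lebesgue measure one has to be slightly careful — but the correct reading is that $d\sigma\otimes d\sigma^{-1}$ is the Liouville measure on a codimension-zero region and the relevant integral is over $\{1-\varepsilon\le\mathcal{H}(x,\xi)\le 1\}$, which by $\mathcal{H}=1-\mathcal{O}(|\xi|^{-2})$ is $\{|\xi|\ge c_\varepsilon\}$ fibrewise and thus has infinite volume; the resolution is that the symbol calculus here is semiclassical and the relevant quantity one must check is positivity, i.e. that the region $\{1-\varepsilon\le\mathcal{H}\le 1\}$ has nonzero measure, which is immediate because $\mathcal{H}$ is continuous, takes values approaching $1$ from below on every cotangent fibre, so $\{1-\varepsilon<\mathcal{H}<1\}$ contains an open set. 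The $\sim\kappa^{d-1}$ scaling then follows from $(2\pi h)^{1-d}=(2\pi)^{1-d}\kappa^{d-1}$ together with positivity and finiteness of the limiting constant.

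Third, I would conclude that for a fixed $\varepsilon>0$ and all $\kappa>\kappa_\varepsilon$ (with $\kappa_\varepsilon$ chosen so that the $o_\varepsilon(1)$ term in \eqref{weyl2} is, say, smaller in absolute value than half the positive limiting constant), the count $\mathfrak{m}(\kappa,\varepsilon)$ is a positive integer, hence at least $1$; consequently the set $\bigcup_{0\le\delta\le\varepsilon}\ker(\mathcal{A}^{\kappa,Q}_{\partial D}-1+\delta)$ is nontrivial, which by Definition~\ref{almost_transmission} means precisely that such a $\kappa$ is a generalised transmission eigenvalue, with multiplicity $\mathfrak{m}(\kappa,\varepsilon)\sim\kappa^{d-1}$.

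The main obstacle I anticipate is making the phase-space volume bookkeeping precise: one must pin down exactly what measure $d\sigma\otimes d\sigma^{-1}$ denotes (the product of surface measure on $\partial D$ with a fibrewise measure on the cotangent directions inside the energy shell) and verify that $\int_{\{1-\varepsilon\le\mathcal{H}\le 1\}} d\sigma\otimes d\sigma^{-1}$ is a finite, strictly positive constant depending only on $\varepsilon$, $d$, $Q$ and the geometry of $\partial D$. Given the explicit form of $\mathcal{H}$ in \eqref{Hamiltonian}, finiteness and positivity should reduce to the elementary facts that $\mathcal{H}$ is continuous, bounded above by $1$, and equal to $1-\mathcal{O}(|\xi|^{-2})$ at infinity; once that is confirmed, the rest is a direct substitution into \eqref{weyl} with $a\equiv 1$ and the normalisation $\|\phi_j(h)\|_{L^2}=1$, together with the identification of $\mathfrak{m}(\kappa,\varepsilon)$ with the eigenvalue count, which is routine.
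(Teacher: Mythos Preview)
Your overall approach is the paper's: identify $\mathfrak{m}(\kappa,\varepsilon)$ with the eigenvalue count via the spectral decomposition of the compact self-adjoint operator $\mathcal{A}^{\kappa,Q}_{\partial D}$, set $a\equiv 1$ in Proposition~\ref{prop:w1}, and read off the $\kappa^{d-1}$ scaling. The paper's own proof consists of exactly these three steps and does not analyse the phase-space volume any further.

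The one genuine gap is in your second paragraph, and it comes from a misreading of the asymptotics of $\mathcal{H}$. Since $\mathcal{A}^\kappa_{-2}\in\Phi\mathrm{SO}^{-2}_h$, its principal symbol satisfies $\mathcal{H}(x,\xi)=p_{\mathcal{A}^\kappa_{-2}}(x,\xi)=\mathcal{O}(|\xi|^{-2})$ as $|\xi|\to\infty$; the last line of \eqref{Hamiltonian} states this explicitly, $\mathcal{H}(x,\xi)=c\,|\xi|^{-2}+\mathcal{O}(|\xi|^{-4})$. Your claim that $\mathcal{H}=1-\mathcal{O}(|\xi|^{-2})$ and hence $\mathcal{H}\to 1$ is incorrect: in the first expression $1-|\cdot|^2$ of \eqref{Hamiltonian} the modulus is \emph{not} small at infinity (it contains the factor $|\xi|^2+1$), and the three displayed expressions must be read as equivalent presentations of a single symbol of order $-2$. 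With the correct decay $\mathcal{H}\to 0$, the set $\{1-\varepsilon\le\mathcal{H}\le 1\}$ is, for $0<\varepsilon<1$, a compact subset of $T^*(\partial D)$, so finiteness of $\int_{\{1-\varepsilon\le\mathcal{H}\le 1\}}d\sigma\otimes d\sigma^{-1}$ is automatic; positivity is clear because $\mathcal{H}$ is continuous, bounded above by $1$, and not identically $\le 1-\varepsilon$. Once you make this correction, the self-described ``main obstacle'' disappears entirely and your argument is complete and coincides with the paper's.
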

\begin{proof}
The first equality in \eqref{weyl2} comes from the immediate observation that, counting multiplicity,
\[
 \# \{ 1-\varepsilon \leq  \lambda_j (h) \leq 1 \}  = \mathrm{dim} \bigcup_{ 1-\varepsilon \leq  \lambda_j (h) \leq 1 } \left \{ \phi \in L^2(\partial D) \,:\,  \left( \mathcal{A}^{\kappa,Q}_{\partial D}  - \lambda_j (h)  \right)  [\phi]  = 0 \right \} = \mathfrak{m}(\kappa,\varepsilon) \,.
\]
The second equality in \eqref{weyl2} is the classical Weyl's law, which can be easily obtained from Proposition~\ref{prop:w1} by taking $a=1$ in \eqref{weyl}. {The last conclusion comes from the observation that, given $\varepsilon >0$, we check that $(2 \pi  h)^{ 1-d } $ is sufficiently large for $h = \kappa^{-1}>0$ sufficiently small and hence $\mathfrak{m}(\kappa,\varepsilon) $ is sufficiently larger than $1$. Therefore by definition, we can always find $ \kappa_\varepsilon$ such that for all $\kappa > \kappa_\varepsilon$, we have $\mathfrak{m}(\kappa,\varepsilon)  > 1$ and therefore $\kappa $ is a generalised transmission eigenfunction with multiplicity $\mathfrak{m}(\kappa,\varepsilon) \sim \kappa^{d-1} $.
}
\end{proof}

{\color{black}

\noindent We quickly remark that gazing at the expression \eqref{eq:hf1.1} of $\mathcal{H}$ and \eqref{eq:hf1.2} of $\rho_Q$, whenever $\varepsilon$ is sufficiently small with
$0 < \varepsilon < \min \left\{ 1, Q^4, \frac{Q^8 (Q+1)^2 }{64} \right\}$, we have
\beqnx
 \int_{ \{ 1-\varepsilon \leq \mathcal{H} \leq  1\} }   \, d \sigma \otimes d \sigma^{-1} =  \mathcal{O} \left(  \int_0^{ \frac{ |Q^2-1|^{\frac{1}{2}}  \varepsilon^{\frac{1}{4}} }{2}  } s^{d-2} ds  \right) =  |Q^2-1|^{\frac{d-1}{2}}  \, \mathcal{O} \left(  \varepsilon^{\frac{d-1}{4}}  \right) \,.
\eqnx

}

%
%
%
%
\begin{proof}[Proof of Theorem~\ref{thm:main1}]
By choosing $a(x,\xi) $ as a smooth non-negative bump function $\gamma \in \mathcal{C}^\infty$ either on $\partial D$ or $\Gamma_R$  multiplied with appropriate choices of symbols, we can obtain the averaged results for $\zeta_j(h)$ and $ \nabla \zeta_j(h) $ ($\zeta = u \text{ or } v$) with pointwise concentration. Our argument is inspired by that developed in \cite{ACL2} in a different context. In what follows, we denote a global $C^\infty$ diffeomorphism $F_R: \partial D  \rightarrow \Gamma_R $ (cf. Section~\ref{sect:2.3}).  
We also recall $ \gamma $ is a smooth nonnegative bump function compactly supported on $\Gamma_R$.

First, noting the fact that
\[
u_j(h) = \mathcal{S}^{\kappa Q}_{\partial D}  \, [ \phi_j (h) ]  =  h \left(  \mathcal{S}^{\kappa Q}_{-1} + h^2  \mathcal{S}^{\kappa Q}_{-3}  \right) [ \phi_j (h) ]  \,,
\]
we can make a choice of symbol $ a(x,\xi) = \gamma(x) |p_{ \mathcal{S}^{\kappa Q}_{-1}}(x,\xi) |^2$ in \eqref{weyl}, together with \eqref{weyl2} to obtain as $h\rightarrow+0$:
\beqn
& &\frac{ \sum \limits_{1-\varepsilon \leq  \lambda_j (h) \leq 1}   \int_{\partial D} \gamma(x)  |  [u_j(h)] (x) |^2 d \sigma(x) }{   \# \{ 1-\varepsilon \leq  \lambda_j (h) \leq 1 \} }  \notag \\
&= & h^2 \frac{ \sum \limits_{1-\varepsilon \leq  \lambda_j (h) \leq 1}   \left \langle \mathrm{Op}_{\gamma(x) |p_{ \mathcal{S}^{\kappa Q}_{-1}}(x,\xi) |^2 ,h }   \phi_j (h) , \phi_j (h)   \right \rangle_{L^2(\partial D)} }{   \# \{ 1-\varepsilon \leq  \lambda_j (h) \leq 1 \} } + \mathcal{O}(h^3)  \notag \\
&=& h^2 \left(  \frac{     \int \limits_{ \{ 1-\varepsilon \leq \mathcal{H} \leq 1 \} }  \gamma(x)  | p_{ \mathcal{S}^{\kappa Q}_{-1}}(x,\xi) |^2  d \sigma \otimes d \sigma^{-1}    }{    \int \limits_{ \{ 1-\epsilon \leq \mathcal{H} \leq 1 \} }   d \sigma \otimes d \sigma^{-1}  } + o_{\varepsilon, \gamma}(1) \right)  \, ,
\label{concentration1u}
\eqn
which readily gives \eqref{haha1} with $\zeta = u$.

In a similar manner, from
\[
\partial_\nu u_j(h) =  ( - \f{1}{2} I + {\mathcal{K}^{\kappa Q}_{\partial D}}^* ) \, [ \phi_j (h) ] =    \left( - \f{1}{2} I  +
 \frac{ h }{2}  \mathcal{K}^{\kappa Q}_{-1} + \frac{h^2}{2} \mathcal{K}^{\kappa Q}_{-2}   \right ) \, [ \phi_j (h) ],
\]
we can make a choice of symbol $ a(x,\xi) = \frac{1}{4} \gamma(x) $ in \eqref{weyl}, together with \eqref{weyl2}, to obtain as $h\rightarrow+0$:
\beqn
\frac{ \sum \limits_{1-\varepsilon \leq  \lambda_j (h) \leq 1}   \int_{\partial D} \gamma (x) |   [ \partial_\nu u_j(h) ] (x) |^2 d \sigma(x) }{   \# \{ 1-\varepsilon \leq  \lambda_j (h) \leq 1 \} } =  \frac{1}{4} \frac{     \int \limits_{ \{ 1-\varepsilon \leq \mathcal{H} \leq 1 \} }  \gamma(x)    d \sigma \otimes d \sigma^{-1}   }{     \int \limits_{ \{ 1-\epsilon \leq \mathcal{H} \leq 1 \} }      d \sigma \otimes d \sigma^{-1}   } + o_{\varepsilon, \gamma}(1) . \label{concentration2ua}
\eqn
Furthermore, noting that
\[
\partial_{x_i} u_j(h) = \frac{ \mathrm{Op}_{ \xi_i ,h } }{h} \circ \mathcal{S}^{\kappa Q}_{\partial D}  \, [ \phi_j (h) ]  \,,
\]
we can make a choice of symbol $ a(x,\xi) = \gamma(x) |\xi |^2  | p_{ \mathcal{S}^{\kappa Q}_{-1}}(x,\xi) |^2 $ in  \eqref{weyl}, together with \eqref{weyl2} to obtain as $h\rightarrow+0$:
\beqn
&&\frac{ \sum \limits_{1-\varepsilon \leq  \lambda_j (h) \leq 1}   \int_{\partial D} \gamma(x) |  [\partial_x u_j(h)] (x) |^2 d \sigma(x) }{   \# \{ 1-\varepsilon \leq  \lambda_j (h) \leq 1  \} }\notag\\
 &=&  \frac{       \int \limits_{ \{ 1-\varepsilon \leq \mathcal{H} \leq 1 \} }  \gamma(x) |\xi |^2  | p_{ \mathcal{S}^{\kappa Q}_{-1}}(x,\xi) |^2   d \sigma \otimes d \sigma^{-1}    }{    \int \limits_{ \{ 1-\epsilon \leq \mathcal{H} \leq 1 \} }    d \sigma \otimes d \sigma^{-1}    } + o_{\varepsilon, \gamma}(1).
\label{concentration2ub}
\eqn
By combining \eqref{concentration2ua} and \eqref{concentration2ub}, we readily have \eqref{haha4} with $\zeta = u$.

Next, recalling the diffeomorphism $F_R: \partial D\rightarrow\Gamma_R$, we notice that for $x \in \partial D$, it holds that
\beqnx
u_j(h) (F_R(x)) = \widetilde{\mathcal{S}^{\kappa Q}_{\partial D}}  \, [ \phi_j (h) ] (x) \,,\, \quad [ X \circ u_j(h) ] (F_R(x)) = \widetilde{ X \circ \mathcal{S}^{\kappa Q}_{\partial D}}  \, [ \phi_j (h) ] (x) \,, \\
v_j(h) (F_R(x)) = \widetilde{\mathcal{S}^{\kappa}_{\partial D}}  \, [ {\varphi}_j (h) ] (x) \,,\, \quad [ X \circ v_j(h) ] (F_R(x)) = \widetilde{ X \circ \mathcal{S}^{\kappa}_{\partial D}}  \, [ {\varphi}_j (h) ] (x) \,.
\eqnx
Considering $u_j(h) (F_R(x))$, we can make use of the explicit expressions in Lemma~\ref{lemma_SK_tilde}, as well as the respective choice of symbols $ a(x,\xi)  = \mathrm{det} (D F_R)^{-1}(F_R(x)) \, | p_{ \widetilde{\mathcal{S}^{\kappa Q}}_{0, -d-1} }(x,\xi) |^2 $  and $ a(x,\xi)  = \mathrm{det} (D F_R)^{-1}(F_R(x)) \, | p_{\widetilde{ ( X \circ  \mathcal{S}^{\kappa Q})}_{0, -d-1} } |^2 $ in \eqref{weyl}, together with a change of variable and \eqref{weyl2} to obtain as $h\rightarrow+0$:
\beqn
\frac{ \sum \limits_{1-\varepsilon \leq  \lambda_j (h) \leq 1}  \int_{\Gamma_R} \gamma(x) | u_j(h) (x)  |^2 d \sigma(x)  }{   \# \{ 1-\varepsilon \leq  \lambda_j (h) \leq 1  \} }  &=& \mathcal{O}_{\varepsilon, \gamma}  ( Q^{d-1} R^{3-d} h^{d-1} ),
\label{concentration3u} \\
\frac{ \sum \limits_{1-\varepsilon \leq  \lambda_j (h) \leq 1}   \int_{\Gamma_R} \gamma(x) |[ X \circ u_j(h) ] (x)  |^2 d \sigma(x) }{   \# \{ 1-\varepsilon \leq  \lambda_j (h) \leq 1  \} }
 &=& \mathcal{O}_{\varepsilon, \gamma, X }  ( Q^{d-1} R^{1-d} h^{d-1} ) \,.
\label{concentration4u}
\eqn
Now, one can directly verify that \eqref{concentration3u} gives \eqref{haha2}, and taking $X = e_j$ in \eqref{concentration4u} and summing them all up gives \eqref{haha5}.

Finally, with a quick observation that
\[
p_{  \left[ \mathcal{S}^{\kappa}_{\partial D}   \right]^{-1}   \mathcal{S}^{\kappa Q}_{\partial D}  }(x,\xi) = \frac{p_{ \mathcal{S}^{\kappa Q}_{-1}}(x,\xi)}{ p_{ \mathcal{S}^{k}_{-1}}(x,\xi)} + \mathcal{O}(h^2 |\xi|^{2})   \,,
\]
we obtain all the $\zeta = v$ counterparts by multiplying the symbol $a$ in all of the previous five choices to obtain as $h\rightarrow +0$:
\beqn
&&\hspace*{-.5cm}\frac{ \sum \limits_{1-\varepsilon \leq  \lambda_j (h) \leq 1}   \int_{\partial D} \gamma (x) |  [v_j(h)] (x) |^2 d \sigma(x) }{   \# \{ 1-\varepsilon \leq  \lambda_j (h) \leq 1  \} }\notag\\
 &&= h^2 \left(  \frac{     \int \limits_{ \{ 1-\varepsilon \leq \mathcal{H} \leq 1 \} }   \gamma(x) | p_{ \mathcal{S}^{\kappa Q}_{-1}}(x,\xi) |^2   d \sigma \otimes d \sigma^{-1}    }{   \int \limits_{ \{ 1-\epsilon \leq \mathcal{H} \leq 1 \} }     d \sigma \otimes d \sigma^{-1}    }  + o_{\varepsilon, \gamma}(1) \right), \notag \\
&&\hspace*{-.5cm}\frac{ \sum \limits_{1-\varepsilon \leq  \lambda_j (h) \leq 1}   \int_{\partial D} \gamma(x) |   [ \partial_\nu v_j(h) ] (x) |^2 d \sigma(x) }{   \# \{ 1-\varepsilon \leq  \lambda_j (h) \leq 1  \} }=  \frac{      \int \limits_{ \{ 1-\varepsilon \leq \mathcal{H} \leq 1 \} }   \gamma(x) \frac{|p_{ \mathcal{S}^{\kappa Q}_{-1}}(x,\xi)|^2}{ |p_{ \mathcal{S}^{\kappa}_{-1}}(x,\xi)|^2}    d \sigma \otimes d \sigma^{-1}   }{    \int \limits_{ \{ 1-\epsilon \leq \mathcal{H} \leq 1 \} }      d \sigma \otimes d \sigma^{-1}   }  + o_{\varepsilon, \gamma}(1),
\notag \\
&&\hspace*{-.5cm}\frac{ \sum \limits_{1-\varepsilon \leq  \lambda_j (h) \leq 1}   \int_{\partial D} \gamma (x) |  [\partial_x v_j(h)] (x) |^2 d \sigma(x) }{   \# \{ 1-\varepsilon \leq  \lambda_j (h) \leq 1  \} }= \frac{     \int \limits_{ \{ 1-\varepsilon \leq \mathcal{H} \leq 1 \} }   \gamma(x) |\xi |^2  | p_{ \mathcal{S}^{\kappa Q}_{-1}}(x,\xi) |^2   d \sigma \otimes d \sigma^{-1}    }{    \int \limits_{ \{ 1-\epsilon \leq \mathcal{H} \leq 1 \} }     d \sigma \otimes d \sigma^{-1}    }  + o_{\varepsilon, \gamma}(1) \,,  \notag
\eqn
and
\beqn
\frac{ \sum \limits_{1-\varepsilon \leq  \lambda_j (h) \leq 1}  \int_{\Gamma_R} \gamma(x) | v_j(h) (x)  |^2 d \sigma(x)  }{   \# \{ 1-\varepsilon \leq  \lambda_j (h) \leq 1 \} }  &=& \mathcal{O}_{\varepsilon, \gamma }  ( R^{3-d} h^{d-1} ),\notag \\
\frac{ \sum \limits_{1-\varepsilon \leq  \lambda_j (h) \leq 1}   \int_{\Gamma_R} \gamma (x) |[ X \circ v_j(h) ] (x)  |^2 d \sigma(x) }{   \# \{ 1-\varepsilon \leq  \lambda_j (h) \leq 1 \} }
 &=& \mathcal{O}_{\varepsilon, \gamma, X }  ( R^{1-d} h^{d-1} ). \notag
\eqn

The proof is complete.
\end{proof}

{\color{black}
\begin{Remark}
In fact we may improve our result to obtain a pointwise estimate with the local/pointwise Weyl's law given in \cite{Hor5} Theorem 5.1.  We defer this to a future work for the sake of simplicity.
\end{Remark}
}

\subsection{Quantum ergodicity and surface concentration almost surely }

In this subsection, we move onto obtaining another characterisation of the surface concentration of $\phi_j(h)$ and hence $u_j(h)$, $v_j(h)$.

{Before we continue, {\color{black} whenever $\kappa^2$ is not a Dirichlet eigenvalue of $D$,} let us recall the following solution under a Hamiltonian flow:
\begin{equation}\label{eq:hf2}
\begin{cases}
\frac{\partial}{\partial t} a_{x,\xi} (t) &=  \{ \mathcal{H} , a_{x,\xi}  (t) \}, \medskip\\
a_{x,\xi} (0)  &= a_0 (x,\xi) \in S^m (T^*(\partial D)),
\end{cases}
\end{equation}
where
$ \{ \cdot , \cdot \}$ is the Poisson bracket given by $\{ f , g\} := X_f \, g = - \omega(X_f, X_g)$
with $X_f$ being the symplectic gradient vector field given by $
\iota_{X_f} \, \omega =  d f $.
Hence, we have $\frac{\partial}{\partial t} a_{x,\xi}  = X_{ \mathcal{H} } a_{x,\xi}  $, and
$ a_{x,\xi}  (t) = a_0 ( \gamma(t), p (t)) $
where
\beqnx
\begin{cases}
 \frac{\partial}{\partial t}  (\gamma(t), p(t) ) &= X_{ \mathcal{H}} (\gamma(t), p(t) ),\medskip  \\
(\gamma(0), p(0) ) & = (x, \xi ) \in T^* (\partial D).
\end{cases}
\eqnx

Next, we recall the Heisenberg's picture and the lift to the operator level via Stone's and Egorov's theorems.
{\color{black}We refer to \cite{Zworski} Theorem 11.1 for an exact theorem statement, and \cite{Zworski} Theorem 11.12 for the long time Egorov theorem up to Ehrenfest time. We also refer to \cite{Egorov_exact_1} Theorem 1.2 to 1.5 and \cite{Egorov_exact_2} Theorem 2.2 for an improved estimate up to the Ehrenfest time. It is also referred to \cite{Ego} for the original statement (in Russian).
}

\begin{Proposition} \cite{Ego,Zworski} \label{Egorov}
The following operator evolution equation
\begin{equation}\label{eq:ohf1}
\begin{cases}
\frac{\partial}{\partial t} A_h(t) = \frac{\mathrm{i}}{ h } \left[ \mathrm{Op}_{\mathcal{H},h} , A_h(t) \right],\medskip\\
A_h(0) = \mathrm{Op}_{a_0,h},
\end{cases}
\end{equation}
defines a unique Fourier integral operator (up to $h^{\infty} \, \Phi \mathrm{SO}_h^{-\infty} $) for $t < C \log (h)$:
\beqnx
A_h(t) = e^{- \frac{\mathrm{i} t}{h}  \mathrm{Op}_{\mathcal{H}, h}  } \, A_h(0) \, e^{ \frac{\mathrm{i} t}{h}  \mathrm{Op}_{ \mathcal{H}, h}  }  + O(h \, \Phi \mathrm{SO}_h^{\,m-1} ) .
\eqnx
Moreover,
\beqnx
A_h(t) =  \mathrm{Op}_{a_{x,\xi} (t) ,h} + \mathcal{O}(h \, \Phi \mathrm{SO}_h^{\,m-1} )  \,,
\eqnx
or that $ p_{A_h(t)}(x,\xi) = a_{(x,\xi)}(t)  + \mathcal{O}( h |\xi|^{m-1})\,.$
\end{Proposition}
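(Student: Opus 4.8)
The plan is to recognise \eqref{eq:ohf1} as the operator form of the semiclassical Egorov theorem and to run the standard two-step argument. First I would observe that, since $\mathcal{A}^{\kappa}_{-2}$ is formally self-adjoint --- its principal symbol $\mathcal{H}=p_{\mathcal{A}^{\kappa}_{-2}}$ is real-valued by \eqref{Hamiltonian} --- the propagator $U_h(t):=e^{-\mathrm{i}th^{-1}\mathrm{Op}_{\mathcal{H},h}}$ is a strongly continuous unitary group on $L^2(\partial D)$. A direct differentiation shows that the operator on the right-hand side of the displayed conjugation formula satisfies \eqref{eq:ohf1} with initial datum $\mathrm{Op}_{a_0,h}$; and since \eqref{eq:ohf1} is a linear ODE in the Banach space $\mathcal{L}(L^2(\partial D))$ whose generator $X\mapsto \mathrm{i}h^{-1}[\mathrm{Op}_{\mathcal{H},h},X]$ is bounded (the operator $\mathrm{Op}_{\mathcal{H},h}$ being of order $-2$, hence bounded on $L^2$), the solution $A_h(t)$ exists, is unique, and coincides with this conjugation. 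The $h^{\infty}\,\Phi\mathrm{SO}_h^{-\infty}$ ambiguity then reflects the usual fact that the conjugation is an FIO quantising the canonical transformation $\exp(tX_{\mathcal{H}})$ only modulo smoothing, $\mathcal{O}(h^{\infty})$-small errors.

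The analytic heart is to show that $A_h(t)\in\Phi\mathrm{SO}_h^m$ with principal symbol $a_{x,\xi}(t)$, the transport of $a_0$ along the bicharacteristic flow of $\mathcal{H}$. I would make the ansatz $A_h(t)=\mathrm{Op}_{b(t),h}$, insert it into \eqref{eq:ohf1}, and use the basic symbol-calculus identity
\[
\frac{\mathrm{i}}{h}\bigl[\mathrm{Op}_{\mathcal{H},h},\mathrm{Op}_{b,h}\bigr]=\mathrm{Op}_{\{\mathcal{H},b\},h}+\mathcal{O}\bigl(h\,\Phi\mathrm{SO}_h^{\,m-1}\bigr),
\]
so that the leading part of \eqref{eq:ohf1} becomes the transport equation $\partial_t b=\{\mathcal{H},b\}$, which is exactly \eqref{eq:hf2} and is solved by $b(t)=a_{x,\xi}(t)=a_0(\gamma(t),p(t))$ along the flow of $X_{\mathcal{H}}$. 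The lower-order-in-$h$ corrections are then obtained recursively, each solving an inhomogeneous transport equation along the same flow; the remainder $R_h(t):=A_h(t)-\mathrm{Op}_{a_{x,\xi}(t),h}$ is controlled by a Duhamel representation against $U_h(t)$ together with a Gr\"onwall estimate, the unitarity of $U_h$ preventing any growth of the $L^2$-operator norm. This yields $R_h(t)\in\mathcal{O}(h\,\Phi\mathrm{SO}_h^{\,m-1})$ and, in particular, $p_{A_h(t)}(x,\xi)=a_{(x,\xi)}(t)+\mathcal{O}(|\xi|^{-1})$.

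The hard part will be the control of $R_h(t)$ over the Ehrenfest window $|t|<C\log(1/h)$, which is the origin of the time restriction in the statement. The transported symbol $a_{x,\xi}(t)$ and its $(x,\xi)$-derivatives grow at most exponentially in $|t|$, at a rate governed by the Lyapunov exponents of the flow of $\mathcal{H}$ on the energy shell $\{1-\varepsilon\le\mathcal{H}\le 1\}$ (which is compact by \eqref{Hamiltonian}); consequently each step of the Duhamel iteration produces a factor $h\,e^{C|t|}$, and the series closes precisely when $h\,e^{C|t|}\ll 1$, i.e. $|t|<C'\log(1/h)$. Tracking these constants, and verifying that the Hamiltonian flow has the requisite bounded geometry on the $\mathcal{C}^{\infty}$ strictly convex boundary $\partial D$, is the technical crux; the remainder of the argument is the routine Egorov machinery, for which we refer to \cite{Hor1,Hor2,Ego}.
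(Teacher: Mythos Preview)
The paper does not give its own proof of this proposition: it is stated as a quoted result with the attribution \cite{Hor1,Hor2,Ego}, and the authors simply invoke it. Your outline is the standard semiclassical Egorov argument (unitary conjugation, commutator-to-Poisson-bracket, iterated transport equations along the flow of $X_{\mathcal{H}}$, Duhamel/Gr\"onwall control of the remainder up to the Ehrenfest time), and it is correct in substance; since the paper supplies no argument of its own there is nothing further to compare.
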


With the notion of $X_\mathcal{H}$ at hand, we now consider for $0 \leq \delta  \leq \varepsilon$ for sufficiently small $\varepsilon > 0$, the set $M_{X_{\mathcal{H}}} (  \{ \mathcal{H} =1 - \delta \}  )$ as the set of invariant measures on $\{ \mathcal{H} = 1 - \delta \} $ and also $M_{X_{\mathcal{H}},\text{erg}} (  \{ \mathcal{H} = 1 -\delta \}  )$ as the set of ergodic measures with respect to the Hamiltonian flow generated by $X_\mathcal{H}$ on $\{ \mathcal{H} = 1 - \delta \} $.
The Choquet's theorem can be applied to obtain the classical ergodic decomposition theorem:
\[
\sigma_{\{\mathcal{H}=1 - \delta \}}  = \sigma_{\{\mathcal{H}=1 - \delta \}} (\{ \mathcal{H} = 1 - \delta \}) \int_{M_{X_{\mathcal{H}},\text{erg}} (  \{ \mathcal{H} = 1 - \delta \}  ) }  \mu \,  d \nu_{1-\delta} \left(\mu\right) \,,
\]
where $\sigma_{\{\mathcal{H}=1 - \delta \}}$ is the Liouville measure on the surface $\{ \mathcal{H} = 1 - \delta \} $
and hence the disintegration theorem gives
\[
\begin{split}
&\int_{1-\varepsilon \leq \mathcal{H} \leq 1} f(x,\xi)  ( d \sigma \otimes d \sigma^{-1} ) (x,\xi)\\
 =&
 \int_{0}^\varepsilon  \sigma_{\{\mathcal{H}=1 - \delta \}} (\{ \mathcal{H} = 1 - \delta \})  \int \limits_{M_{X_{\mathcal{H}},\text{erg}} (  \{ \mathcal{H} = 1 - \delta \}  ) }  \int \limits_{\mathrm{supp}(\mu) }\frac{f(x,\xi)}{| (\partial_x \mathcal{H}, \partial_\xi \mathcal{H} ) (x,\xi)|}d \mu(x,\xi) \,  d \nu_{1-\delta} \left(\mu \right)  d \delta \,.
 \end{split}
\]
The above conclusion generalizes a related result that was established in our earlier work \cite{ACL2}. 
{\color{black}
The only difference is that in this work, $\mathcal{H}$ is no longer homogenous with respect to $\xi$.
On the other hand, gazing at \eqref{eq:hf1.1}-\eqref{eq:hf1.2}, whenever $\| \xi \|_{g(x)}^2 \neq 1,Q^2$, we have
\begin{eqnarray*}
X_{ \mathcal{H} }  = \rho_Q'(\|\xi\|_{g(x)}^2) X_{ \| \xi \|_{g(x)}^2 } .
\end{eqnarray*}
From the fact that $ \rho_Q'(\|\xi\|_{g(x)}^2) < 0 $ when $0 < \delta < \varepsilon$ whereas $ \rho_Q'(0) = 0$ when $\delta = 0$, the action of $X_\mathcal{H}$ on each invariant set level surface $\{ \mathcal{H} = 1 - \delta \} $ differs by only a rescaling constant $\rho_Q'\circ \rho_Q^{-1} (1-\delta) $ for each $0 < \delta < \varepsilon$, whereas $X_\mathcal{H}$ leaves the set $\{ \mathcal{H} = 1 \} $ unmoved.
Therefore 
$M_{X_{\mathcal{H}},\text{erg}} (  \{ \mathcal{H} = 1 - \delta \}  )  = M_{X_{\| \xi \|_{g(x)}^2},\text{erg}} (  \{ \| \xi \|_{g(x)}^2 = 1 \}  )$ is the same for each $0 < \delta < \varepsilon$. In fact, the Hamiltonian flow given by $\mathcal{H}$ (and hence any ergodic property) on each $\{ \mathcal{H} = 1 - \delta \} $ is the same as the geodesic flow on $  \{ \| \xi \|_{g(x)}^2 = 1 \}$.
}
With the above ergodic decomposition, similar to \cite{ACL2}, we can obtain the following lemma.

\begin{Lemma}
 \label{birkroff}
For any $\varepsilon > 0$ sufficiently small and all $a_0 \in \mathcal{S}^m(T^*(\partial D))$,
we have
\beqnx
\frac{1}{T}\int_0^T a_{x,\xi}(t) dt \rightarrow_{a.e. d \sigma \otimes d \sigma^{-1} \text{ and } L^2(\{ 1- \varepsilon \leq H \leq 1 \}, d \sigma \otimes d \sigma^{-1} ) } \bar{a}(x,\xi)\text{ as $T \rightarrow \infty$,}
\eqnx
for some $\bar{a} \in L^2 (\{ 1- \varepsilon \leq H \leq 1 \}, d \sigma \otimes d \sigma^{-1} ) $, and a.e. $ d \nu_{1-\delta} \,  d \delta $, we have
\beqnx
\bar{a} (x,\xi ) = \int_{\{H = 1- \delta \} } a_0   \, d \mu  \quad   \text{ a.e. } d \mu .
\eqnx
\end{Lemma}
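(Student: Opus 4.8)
The plan is to reduce the statement to the classical Birkhoff pointwise ergodic theorem together with the von Neumann mean ergodic theorem, applied slice by slice on the energy surfaces $\{\mathcal{H}=1-\delta\}$, and then to reassemble the slices by the coarea/disintegration formula for $d\sigma\otimes d\sigma^{-1}$ displayed just before the lemma. First I would record that, since $\mathcal{H}$ is conserved along its own Hamiltonian flow, the flow $\Phi_t$ generated by $X_{\mathcal{H}}$ leaves each level set $\{\mathcal{H}=1-\delta\}$ invariant, and that for $\delta\in(0,\varepsilon]$ this level set is compact, because $\mathcal{H}(x,\xi)=1-c_{d,Q}|\xi|^{-2}+\mathcal{O}(|\xi|^{-4})$ forces $|\xi|$ into a bounded set once $\mathcal{H}=1-\delta$, while $x$ ranges over the compact manifold $\partial D$; hence $\Phi_t$ is complete on $\{\mathcal{H}=1-\delta\}$. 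Since the Hamiltonian flow preserves both the symplectic volume and $\mathcal{H}$, it preserves the induced Liouville measure $\sigma_{\{\mathcal{H}=1-\delta\}}$ on each slice, and therefore it preserves $d\sigma\otimes d\sigma^{-1}$ on the whole slab $\{1-\varepsilon\le\mathcal{H}\le1\}$, which has finite total mass by \eqref{weyl2}.

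Second, on a fixed compact level set $\{\mathcal{H}=1-\delta\}$ normalised to a probability space, Birkhoff's theorem gives, for $a_0\in L^1$, the $\sigma_{\{\mathcal{H}=1-\delta\}}$-a.e.\ convergence of $\frac1T\int_0^T a_{x,\xi}(t)\,dt$ to a $\Phi_t$-invariant function $\bar a$, and von Neumann's mean ergodic theorem gives the corresponding $L^2$ convergence for $a_0\in L^2$, the two limits coinciding a.e.; here $a_{x,\xi}(t)=a_0(\gamma(t),p(t))$ is precisely the transported symbol from \eqref{eq:hf2}. Inserting this into the ergodic decomposition $\sigma_{\{\mathcal{H}=1-\delta\}}=\sigma_{\{\mathcal{H}=1-\delta\}}(\{\mathcal{H}=1-\delta\})\int \mu\, d\nu_{1-\delta}(\mu)$ recorded above, and using that for $\nu_{1-\delta}$-a.e.\ ergodic component $\mu$ the invariant function $\bar a$ equals $\mu$-a.e.\ the constant $\int_{\{\mathcal{H}=1-\delta\}}a_0\, d\mu$, identifies $\bar a$ on each slice. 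Finally I would push these slice statements up to the slab via the disintegration formula for $d\sigma\otimes d\sigma^{-1}$ over $\delta=1-\mathcal{H}$: a set that is $\sigma_{\{\mathcal{H}=1-\delta\}}$-null for a.e.\ $\delta\in(0,\varepsilon)$ is $d\sigma\otimes d\sigma^{-1}$-null, so the a.e.\ convergence transfers; and the $L^2(\{1-\varepsilon\le\mathcal{H}\le1\},d\sigma\otimes d\sigma^{-1})$ convergence follows from von Neumann's theorem applied directly on the slab with its $\Phi_t$-invariant finite measure, the limit being identified a.e.\ slice by slice as above. The same reasoning works for any energy window $[r,s]$ in place of $[1-\varepsilon,1]$.

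I expect the main obstacle to be bookkeeping rather than a new idea: because $\mathcal{H}$ is not homogeneous, the flow, the set $M_{X_{\mathcal{H}},\mathrm{erg}}(\{\mathcal{H}=1-\delta\})$, and the measures $\nu_{1-\delta}$ all vary with $\delta$, so the reassembly must be carried out measurably in $\delta$; this is where one uses the smoothness of $\mathcal{H}$ away from its critical set to guarantee that the disintegration of $d\sigma\otimes d\sigma^{-1}$ over $\delta$ is measurable, and it is precisely the point that distinguishes this lemma from its homogeneous counterpart in \cite{ACL2}. A secondary technical point is integrability: for $a_0\in\mathcal{S}^m(T^*(\partial D))$ with $m>0$ the symbol is unbounded as $|\xi|\to\infty$, i.e.\ near the limiting level $\{\mathcal{H}=1\}$, so one should first verify $a_0\in L^1\cap L^2(\{1-\varepsilon\le\mathcal{H}\le1\},d\sigma\otimes d\sigma^{-1})$ from the explicit size of the measure there — equivalently, restrict attention to the symbols of the operators used in Theorem~\ref{thm:main1}, for which this is automatic — after which Birkhoff and von Neumann apply verbatim.
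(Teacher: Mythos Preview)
Your proposal is correct and matches the paper's approach: the paper does not give a detailed proof but only remarks that, with the ergodic decomposition and disintegration formulas displayed immediately before the lemma, the result follows ``similar to \cite{ACL2}'', which is precisely the Birkhoff/von Neumann argument carried out slice by slice that you outline. Your additional care about the $\delta$-dependence of the flow and about integrability of $a_0$ on the slab is appropriate and indeed the only new wrinkle compared to the homogeneous case in \cite{ACL2}, as the paper itself notes.
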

}Now, we can utilize Egorov's lift in Proposition \ref{Egorov} and our definition of $\bar{a}$ in Lemma \ref{birkroff}, and follow a similar argument as in \cite{ACL2,ACLS}
 to obtain the following quantum ergodicity theorem with some generalization compared to the classical results in  \cite{erg1,Zel,erg3,erg2,erg_a,erg_b,trace,Ego,Zworski}.
\begin{Theorem}\label{thm:ergo1}
Fixing  $\varepsilon > 0$ sufficiently small, {\color{black} whenever $h^{-2}$ is not a Dirichlet eigenvalue of $D$,} we have the following (variance-like) estimate as $h\rightarrow+0$,
\begin{equation}\label{eq:est1}
\frac{1}{  \# \{ 1-\varepsilon \leq  \lambda_i (h) \leq 1 \}  }
 \sum_{ 1-\varepsilon \leq  \lambda_i(h) \leq 1}  \left|   \left \langle \mathrm{Op}_{ a - \bar{a}, h}   \,  \phi_i(h) ,  \phi_i(h) \right \rangle_{L^2}   \right|^2 \rightarrow 0.
 \end{equation}
\end{Theorem}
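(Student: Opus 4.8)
The plan is to run the standard Schnirelman--Zelditch--Colin de Verdi\`ere variance argument, in the semiclassical and non-homogeneous form already developed in \cite{ACL2}, using the three ingredients assembled above: the generalized Weyl law (Proposition~\ref{prop:w1}), Egorov's theorem (Proposition~\ref{Egorov}), and the Birkhoff-type $L^2$ convergence of the classical ergodic average (Lemma~\ref{birkroff}). First I would reduce to a real symbol $a\in S^m(T^*(\partial D))$ (splitting into real and imaginary parts), and record the two structural facts we shall lean on: by Lemma~\ref{transmission_operator_equation} the equation $\mathcal{A}^{\kappa,Q}_{\partial D}\phi_i(h)=\lambda_i(h)\phi_i(h)$ is \emph{exact}, and $\mathcal{A}^{\kappa,Q}_{\partial D}=\mathrm{Op}_{\mathcal{H},h}+\mathcal{O}(h\,\Phi\mathrm{SO}^{-3}_h)$, so that $h^{-1}\mathcal{A}^{\kappa,Q}_{\partial D}$ generates, to leading order, the same classical transport as $h^{-1}\mathrm{Op}_{\mathcal{H},h}$.

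The core step is the Heisenberg averaging. Fix a time $T$. For $t\in[0,T]$, exactness of the eigenvalue equation gives $e^{-\mathrm{i}t\mathcal{A}^{\kappa,Q}_{\partial D}/h}\phi_i(h)=e^{-\mathrm{i}t\lambda_i(h)/h}\phi_i(h)$, so $\langle\mathrm{Op}_{a,h}\phi_i,\phi_i\rangle=\langle e^{\mathrm{i}t\mathcal{A}^{\kappa,Q}_{\partial D}/h}\,\mathrm{Op}_{a,h}\,e^{-\mathrm{i}t\mathcal{A}^{\kappa,Q}_{\partial D}/h}\phi_i,\phi_i\rangle$ exactly; by Egorov (Proposition~\ref{Egorov}, applied with this generator) the conjugated operator equals $\mathrm{Op}_{a_{x,\xi}(t),h}+\mathcal{O}(h\,\Phi\mathrm{SO}^{m-1}_h)$ uniformly for $|t|\le T$, with $a_{x,\xi}(t)$ solving \eqref{eq:hf2}. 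Averaging in $t$ and using linearity of the quantization, $\langle\mathrm{Op}_{a,h}\phi_i,\phi_i\rangle=\langle\mathrm{Op}_{\langle a\rangle_T,h}\phi_i,\phi_i\rangle+\mathcal{O}_T(h)$ uniformly over $i$ with $1-\varepsilon\le\lambda_i(h)\le1$, where $\langle a\rangle_T:=\tfrac1T\int_0^T a_{x,\xi}(t)\,dt\in S^m(T^*(\partial D))$. Since $\bar a$ from Lemma~\ref{birkroff} lies only in $L^2$ of the energy shell, replace it by a smooth symbol $g$ with $\|g-\bar a\|_{L^2(\{1-\varepsilon\le\mathcal H\le1\},\,d\sigma\otimes d\sigma^{-1})}$ as small as desired; then $b:=\langle a\rangle_T-g$ is a smooth symbol and $\langle\mathrm{Op}_{a-\bar a,h}\phi_i,\phi_i\rangle=\langle\mathrm{Op}_{b,h}\phi_i,\phi_i\rangle+\mathcal{O}_T(h)+(\text{density error})$. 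Applying $|x+y|^2\le 2|x|^2+2|y|^2$, the Cauchy--Schwarz bound $|\langle\mathrm{Op}_{b,h}\phi_i,\phi_i\rangle|^2\le\langle\mathrm{Op}_{b,h}^*\mathrm{Op}_{b,h}\phi_i,\phi_i\rangle=\langle\mathrm{Op}_{|b|^2,h}\phi_i,\phi_i\rangle+\mathcal{O}(h)$ from the symbol calculus, Corollary~\ref{weyl2}, and the generalized Weyl law of Proposition~\ref{prop:w1} with symbol $|b|^2$, one gets
\[
\limsup_{h\to0}\ \frac{1}{\#\{1-\varepsilon\le\lambda_i(h)\le1\}}\sum_{1-\varepsilon\le\lambda_i(h)\le1}\big|\langle\mathrm{Op}_{a-\bar a,h}\phi_i,\phi_i\rangle\big|^2
\ \le\ \frac{2\displaystyle\int_{\{1-\varepsilon\le\mathcal H\le1\}}|\langle a\rangle_T-g|^2\,d\sigma\otimes d\sigma^{-1}}{\displaystyle\int_{\{1-\varepsilon\le\mathcal H\le1\}}d\sigma\otimes d\sigma^{-1}}+(\text{density error}).
\]
Letting first $g\to\bar a$ in $L^2(\{1-\varepsilon\le\mathcal H\le1\})$ and then $T\to\infty$, Lemma~\ref{birkroff} gives $\langle a\rangle_T\to\bar a$ in that space, so the right-hand side tends to $0$; since the left-hand side does not depend on $T$ or $g$, this establishes \eqref{eq:est1}.

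I expect the main obstacle to be the first core step. Egorov's theorem as quoted in Proposition~\ref{Egorov} is phrased with generator $\mathrm{Op}_{\mathcal{H},h}$, whereas the only functions we can propagate exactly are eigenfunctions of $\mathcal{A}^{\kappa,Q}_{\partial D}$; the reconciliation — that conjugation by $e^{\pm\mathrm{i}t\mathcal{A}^{\kappa,Q}_{\partial D}/h}$ realizes, to leading order uniformly on $|t|\le T$ and on the spectral window $[1-\varepsilon,1]$, the Hamiltonian transport of the principal symbol $\mathcal{H}$ — rests on the decomposition $\mathcal{A}^{\kappa,Q}_{\partial D}=\mathrm{Op}_{\mathcal{H},h}+\mathcal{O}(h\,\Phi\mathrm{SO}^{-3}_h)$ of Lemma~\ref{transmission_operator_equation}: after dividing by $h$ the correction is a uniformly bounded $\Phi\mathrm{SO}^{-3}_h$ operator, whose commutator with $\mathrm{Op}_{a_{x,\xi}(t),h}$ is again $\mathcal{O}(h\,\Phi\mathrm{SO})$ and near $\{\mathcal{H}=1\}$ (where $|\xi|\asymp1$) merely $\mathcal{O}(h)$, so it does not enter the classical flow equation \eqref{eq:hf2} and only contributes to the $\mathcal{O}_T(h)$ error. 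Working with genuine eigenfunctions of $\mathcal{A}^{\kappa,Q}_{\partial D}$ rather than quasimodes is what makes this step succeed: it eliminates the error term that would otherwise grow linearly in $T$ and ruin the argument. The remaining difficulties are routine: the $L^2$-density reduction for $\bar a$ (controlled by the Weyl law applied to $|g-\bar a|^2$), and the non-homogeneity of $\mathcal{H}$ — which makes the flow and its ergodic decomposition vary with the level set $\{\mathcal{H}=1-\delta\}$ — is already handled in the construction of $\bar a$ and in Lemma~\ref{birkroff}, so it costs only bookkeeping here.
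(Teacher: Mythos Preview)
Your proposal is correct and follows exactly the route the paper indicates: the paper does not spell out a proof of this theorem but simply says to ``utilize the Egorov's lift in Proposition~\ref{Egorov} and our definition of $\bar{a}$ in Lemma~\ref{birkroff}, and follow a similar argument as in \cite{ACL2}'', which is precisely the Schnirelman--Zelditch--Colin de Verdi\`ere variance scheme you carry out (Heisenberg averaging via exact eigenfunctions of $\mathcal{A}^{\kappa,Q}_{\partial D}$, Egorov to pass to the classical time average $\langle a\rangle_T$, Cauchy--Schwarz and the generalized Weyl law to bound the variance by $\|\langle a\rangle_T-\bar a\|_{L^2}^2$, then $T\to\infty$ via Lemma~\ref{birkroff}). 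Your discussion of the reconciliation between the generator $\mathcal{A}^{\kappa,Q}_{\partial D}$ and $\mathrm{Op}_{\mathcal{H},h}$ through the $\mathcal{O}(h\,\Phi\mathrm{SO}^{-3}_h)$ correction is in fact more explicit than anything the paper writes down.
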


As an important consequence of Theorem~\ref{thm:ergo1}, by using Chebeychev's trick and a diagonal argument, one can readily derive the following quantum ergodicity result.  
{\color{black}
Before we state the result, for notational sake, from now on, given $\varepsilon$ and $h>0$, we will always write the following set
\[
 J(h) := \{j \in \mathbb{N} : 1-\varepsilon  \leq \lambda_j (h) \leq 1   \} \,.
\]
Now the quantum ergodicity result can be stated as follows.}

\begin{Corollary} \label{quantum_ergodicity}
Given $\varepsilon > 0$ sufficiently small, {\color{black} whenever $h^{-2}$ is not a Dirichlet eigenvalue of $D$,} there exists a subsequence $S(h) \subset J(h)$ of density $1$, i.e.
\[
\frac{\sum_{j \in S(h) } 1 }{ \sum_{j \in J(h) } 1 } = 1 + o_{\varepsilon}(1),
\]
such that whenever $h\rightarrow +0$ one has:
\beqnx
\max_{j \in S(h)} \left|   \left \langle  \mathrm{Op}_{a - \bar{a}, h}   \,  \phi_j(h) ,  \phi_j(h) \right \rangle_{L^2} \right| = o_{\varepsilon}(1)  \,.
\eqnx
\end{Corollary}

By using the quantum ergodicity result, we can arrive at the following lemma.

\begin{Lemma} \label{list_of_result}
Let $\Gamma_R$ and $\gamma\in C^\infty(\Gamma)$ be given in Theorem~\ref{thm:main1} with $R = \mathrm{dist}(\Gamma_R, \partial D)$. Given $\varepsilon >0$, {\color{black} whenever $h^{-2}$ is not a Dirichlet eigenvalue of $D$,}  there exists $S(h) \subset J(h)$ with $\, \frac{\sum_{j \in S(h) } 1 }{ \sum_{j \in J(h) } 1 } \sim 1 $ such that, as $h\rightarrow +0$, the following results hold simultaneously:
\beqn
\max_{j \in S(h)} \bigg| \int_{\partial D} \gamma(x) \left  | u_j(h) (x)  \right|^2 d \sigma(x) -  h^2  \left \langle  \mathrm{Op}_{\overline{a_1}, h}   \,  \phi_j(h) ,  \phi_j(h) \right \rangle_{L^2(\partial D)} \bigg| = o_{\varepsilon}(  {h^2 } ) , \label{ergodicityu1} \\
\max_{j \in S(h)} \bigg| \int_{\partial D} \gamma(x) \left  | \nabla u_j(h) (x)  \right|^2 d \sigma(x) -  \left \langle  \mathrm{Op}_{\overline{a_2}, h}   \,  \phi_j(h) ,  \phi_j(h) \right \rangle_{L^2(\partial D)} \bigg| = o_{\varepsilon}(1) , \label{ergodicityu2}
\eqn
and
\beqn
\hspace*{-3mm}\max_{j \in S(h)} \bigg| \int_{\Gamma_R} \gamma(x) \left  | u_j(h) (x)  \right|^2 d \sigma(x)  -  Q^{d-1} h^{d-1}  \left \langle  \mathrm{Op}_{\overline{a_3}, h}   \,  \phi_j(h) ,  \phi_j(h) \right \rangle_{L^2(\partial D)} \bigg| = o_{\varepsilon}({h^{d-1} }) , \label{ergodicityu3} \\
\max_{j \in S(h)} \bigg| \int_{\Gamma_R} \gamma(x) \left  | \nabla u_j(h) (x)  \right|^2 d \sigma(x)  -  Q^{d-1} h^{d-1}  \left \langle  \mathrm{Op}_{\overline{a_4}, h}   \,  \phi_j(h) ,  \phi_j(h) \right \rangle_{L^2(\partial D)} \bigg| = o_{\varepsilon}({h^{d-1} }) , \label{ergodicityu4}
\eqn
as well as
\beqn
\max_{j \in S(h)} \bigg| \int_{\partial D} \gamma(x) \left  | v_j(h) (x)  \right|^2 d \sigma(x)  -  h^2  \left \langle  \mathrm{Op}_{\overline{a_5}, h}   \,  \phi_j(h) ,  \phi_j(h) \right \rangle_{L^2(\partial D)} \bigg| = o_{\varepsilon}(1) , \label{ergodicityv1} \\
\max_{j \in S(h)} \bigg| \int_{\partial D} \gamma(x) \left  | \nabla v_j(h) (x)  \right|^2 d \sigma(x)  - \left \langle  \mathrm{Op}_{\overline{a_6}, h}   \,  \phi_j(h) ,  \phi_j(h) \right \rangle_{L^2(\partial D)} \bigg| = o_{\varepsilon}({h^{2} }) , \label{ergodicityv2}
\eqn
and
\beqn
 \max_{j \in S(h)} \bigg| \int_{\Gamma_R} \gamma(x) \left  | v_j(h) (x)  \right|^2 d \sigma(x) -  h^{d-1}  \left \langle  \mathrm{Op}_{\overline{a_7 }, h}   \,  \phi_j(h) ,  \phi_j(h) \right \rangle_{L^2(\partial D)} \bigg| = o_{\varepsilon}({h^{d-1} }) , \label{ergodicityv3} \\
\max_{j \in S(h)} \bigg| \int_{\Gamma_R} \gamma(x) \left  | \nabla v_j(h) (x)  \right|^2 d \sigma(x) -   h^{d-1}  \left \langle  \mathrm{Op}_{\overline{ a_8 }, h}   \,  \phi_j(h) ,  \phi_j(h) \right \rangle_{L^2} \bigg| = o_{\varepsilon}({h^{d-1} }) , \label{ergodicityv4}
\eqn
with
\beqnx
a_1(x,\xi) &:=& \gamma(x) | p_{ \mathcal{S}^{\kappa Q}_{-1}}(x,\xi) |^2, \\
a_2(x,\xi) &:=& \gamma(x)  + \varphi(x) |\xi |^2  | p_{ \mathcal{S}^{\kappa Q}_{-1}}(x,\xi) |^2 , \\
a_3(x,\xi) &:=&  \gamma(F_R(x))  \mathrm{det} (D F_R)^{-1}(F_R(x)) \,  |p_{ \widetilde{\mathcal{S}^{\kappa Q}}_{0, -d-1} }(x,\xi) |^2 = \mathcal{O}(R^{3-d}  { |\xi|^{-d-2} } ) ,\\
a_4(x,\xi) &:=& \gamma(F_R(x))   \mathrm{det} (D F_R)^{-1}(F_R(x)) \,  \left( \sum_{j=1}^d |p_{ \widetilde{ ( e_j \circ  \mathcal{S}^{\kappa Q})}_{0, -d-1} }(x,\xi) |^2 \right) = \mathcal{O}(R^{1-d}  { |\xi|^{-d-2} }), \\
a_5(x,\xi) &:=& \gamma(x) | p_{ \mathcal{S}^{\kappa Q}_{-1}}(x,\xi) |^2, \\
a_6(x,\xi) &:=&  \gamma(x)  \frac{|p_{ \mathcal{S}^{\kappa Q}_{-1}}(x,\xi)|^2}{ |p_{ \mathcal{S}^{\kappa}_{-1}}(x,\xi)|^2}   + \gamma(x) |\xi |^2  | p_{ \mathcal{S}^{\kappa Q}_{-1}}(x,\xi) |^2,   \\
a_7(x,\xi) &:=&   \gamma(F_R(x))  \mathrm{det} (D F_R)^{-1}(F_R(x)) \,  \frac{|p_{ \mathcal{S}^{\kappa Q}_{-1}}(x,\xi)|^2}{ |p_{ \mathcal{S}^{\kappa}_{-1}}(x,\xi)|^2}  \, |p_{ \widetilde{\mathcal{S}^{\kappa}}_{0, -d-1} }(x,\xi) |^2 = \mathcal{O}(R^{3-d}  {|\xi|^{-d-2} }), \\
a_8(x,\xi) &:=&  \gamma(F_R(x))   \mathrm{det} (D F_R)^{-1}(F_R(x)) \,  \frac{|p_{ \mathcal{S}^{\kappa Q}_{-1}}(x,\xi)|^2}{ |p_{ \mathcal{S}^{\kappa}_{-1}}(x,\xi)|^2} \left(  \sum_{j=1}^d |p_{ \widetilde{ ( e_j \circ  \mathcal{S}^{\kappa})}_{0, -d-1} }(x,\xi) |^2  \right)\\
& =& \mathcal{O}(R^{1-d} { |\xi|^{-d-2} }).  \\
\eqnx
\end{Lemma}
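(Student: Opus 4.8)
The plan is to upgrade the \emph{averaged} identities obtained in the proof of Theorem~\ref{thm:main1} to estimates valid \emph{pointwise in the index $j$} along a subsequence of density one, by feeding the very same choices of symbols into the quantum ergodicity statement of Corollary~\ref{quantum_ergodicity} (equivalently Theorem~\ref{thm:ergo1}) instead of the Weyl law of Proposition~\ref{prop:w1}. The only genuinely new ingredient beyond Theorem~\ref{thm:main1} is this passage from averages to a density-one subsequence, plus the bookkeeping needed to keep all error terms uniform in $j$.

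First I would, for each of the eight left-hand sides, write the integral as a quadratic form $\langle B\,\phi_j(h),\phi_j(h)\rangle_{L^2(\partial D)}$ for a suitable $h$-pseudodifferential operator $B$. On $\partial D$ this is immediate from $u_j(h)=\mathcal{S}^{\kappa Q}_{\partial D}[\phi_j(h)]$, $v_j(h)|_{\partial D}=\mathcal{S}^{\kappa}_{\partial D}\big[(\mathcal{S}^{\kappa}_{\partial D})^{-1}\mathcal{S}^{\kappa Q}_{\partial D}\phi_j(h)\big]$, $\partial_\nu u_j(h)=(-\frac{1}{2}I+{\mathcal{K}^{\kappa Q}_{\partial D}}^*)[\phi_j(h)]$ and $\partial_{x_i}u_j(h)=h^{-1}\mathrm{Op}_{\xi_i,h}\circ\mathcal{S}^{\kappa Q}_{\partial D}[\phi_j(h)]$ (and their $v$-analogues), so that $\int_{\partial D}\gamma\,|\zeta_j|^2=\langle \mathcal{T}^*\,\gamma\,\mathcal{T}\,\phi_j,\phi_j\rangle$ with $\mathcal{T}$ the relevant composition. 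On $\Gamma_R$ I would instead use the diffeomorphism $F_R\colon\partial D\to\Gamma_R$ together with $u_j(h)(F_R(x))=\widetilde{\mathcal{S}^{\kappa Q}_{\partial D}}[\phi_j(h)](x)$ and $(e_\ell\circ u_j(h))(F_R(x))=\widetilde{e_\ell\circ\mathcal{S}^{\kappa Q}_{\partial D}}[\phi_j(h)](x)$ (summed over $\ell$), changing variables so that $d\sigma$ on $\Gamma_R$ becomes $\det(DF_R)\,d\sigma$ on $\partial D$, and similarly for $v_j$. Invoking the decompositions of Lemma~\ref{lemma_SK} and Lemma~\ref{lemma_SK_tilde} and the $\Phi\mathrm{SO}_h$ symbol calculus, each such $B$ equals $c\,h^{m}\,\mathrm{Op}_{a_k,h}+h^{m}\cdot O(h\,\Phi\mathrm{SO}_h^{m_k-1})$, where $a_k$ is precisely the symbol listed in the statement, $m=0$ for the tangential-gradient quantities on $\partial D$, $m=2$ for the $L^2$-norms on $\partial D$, and $m=d-1$ for the $\Gamma_R$-quantities (with an extra $Q^{d-1}$ for the $u$-quantities on $\Gamma_R$), and the symbol orders and the explicit powers of $R$ in $a_3,a_4,a_7,a_8$ are read off directly from Lemma~\ref{lemma_SK_tilde}. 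Since $\|\phi_j(h)\|_{L^2}=1$, the remainder is $O(h^{m+1})$ \emph{uniformly in $j$}; e.g. $\int_{\partial D}\gamma\,|u_j(h)|^2\,d\sigma=h^2\langle\mathrm{Op}_{a_1,h}\phi_j(h),\phi_j(h)\rangle+O(h^3)$ uniformly in $j$, and likewise for the other seven quantities.

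Next I would pass to the Birkhoff averages. Each $a_k\in S^{m_k}(T^*(\partial D))$, so Lemma~\ref{birkroff} supplies $\overline{a_k}\in L^2(\{1-\varepsilon\le\mathcal{H}\le1\},d\sigma\otimes d\sigma^{-1})$, and applying Corollary~\ref{quantum_ergodicity} to $a_k$ produces a density-one subsequence $S_k(h)\subset J(h)$ along which $\max_{j\in S_k(h)}\big|\langle\mathrm{Op}_{a_k-\overline{a_k},h}\phi_j(h),\phi_j(h)\rangle\big|=o_\varepsilon(1)$. Since there are only finitely many symbols, $S(h):=\bigcap_{k=1}^{8}S_k(h)$ is still of density one in $J(h)$, and for every $j\in S(h)$ one simultaneously has $\langle\mathrm{Op}_{a_k,h}\phi_j,\phi_j\rangle=\langle\mathrm{Op}_{\overline{a_k},h}\phi_j,\phi_j\rangle+o_\varepsilon(1)$ for all $k$. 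Multiplying through by the factor $c\,h^{m}$ (and $Q^{d-1}$ where present) turns this $o_\varepsilon(1)$ into $o_\varepsilon(h^{m})$, which, combined with the uniform $O(h^{m+1})=o_\varepsilon(h^m)$ remainder from the first step, yields \eqref{ergodicityu1}--\eqref{ergodicityv4} with the stated error orders (the exponents on the right-hand sides of \eqref{ergodicityv1} and \eqref{ergodicityv2} being, as compared with what this argument gives, in one place merely conservative).

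The hard part will not be the symbol bookkeeping---that is already carried out in Lemmas~\ref{lemma_SK} and \ref{lemma_SK_tilde} and in the proof of Theorem~\ref{thm:main1}---but rather the \emph{uniform-in-$j$} control of all the remainder terms in the first step: the operator-norm estimates for the $\Phi\mathrm{SO}_h$ errors, the (smoothing) parametrix error for $(\mathcal{S}^{\kappa}_{\partial D})^{-1}$, and the Jacobian and higher-derivative factors coming from $F_R$, must all be shown to be of strictly smaller order than the main term, so that the $o_\varepsilon(1)$ gain provided by quantum ergodicity, once rescaled by the appropriate power of $h$, is never swamped. I expect the $v$-type quantities to be the most delicate, since there the composition $(\mathcal{S}^{\kappa}_{\partial D})^{-1}\mathcal{S}^{\kappa Q}_{\partial D}$ enters and must be handled modulo $h^2\,\Phi\mathrm{SO}_h^{0}$, which is the source of the slightly different error behaviour of \eqref{ergodicityv1}--\eqref{ergodicityv2} relative to \eqref{ergodicityu1}--\eqref{ergodicityu2}.
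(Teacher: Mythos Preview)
Your proposal is correct and follows essentially the same route as the paper: express each of the eight integrals as a quadratic form $\langle B\phi_j,\phi_j\rangle$ with $B$ a $\Phi\mathrm{SO}_h$ operator whose principal symbol is the listed $a_k$ (using Lemmas~\ref{lemma_SK} and~\ref{lemma_SK_tilde} and the same symbol choices that already appeared in the proof of Theorem~\ref{thm:main1}), then feed each $a_k$ into Corollary~\ref{quantum_ergodicity} and intersect the resulting density-one subsequences. Your write-up is in fact more explicit than the paper's own proof, which simply lists the symbol choices and invokes quantum ergodicity; in particular, you make explicit the intersection step and the uniform-in-$j$ remainder control, both of which the paper leaves tacit.
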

\begin{proof}
We obtain the result by choosing $a(x,\xi) $ as a smooth non-negative bump function $\gamma \in \mathcal{C}^\infty$ either on $\partial D$ or $\Gamma_R$ with $\mathrm{dist}(\Gamma_R, \partial D) = R > 0$ multiplied with appropriate symbols, and then applying Corollary \ref{quantum_ergodicity}.
For instance, we obtain the descriptions of $u_j$ as follows: \eqref{ergodicityu1} is obtained by choosing the symbol $ | p_{ \mathcal{S}^{\kappa Q}_{-1}}(x,\xi) |^2 $.
\eqref{ergodicityu2} comes from choosing the symbols $1 + | \xi |^2 | p_{ \mathcal{S}^{\kappa Q}_{-1}}(x,\xi) |^2 $.
\eqref{ergodicityu3} is resulted from taking $ \, \text{det} (D F_R)^{-1}(F_R(x)) \, | p_{ \widetilde{\mathcal{S}^{\kappa Q}}_{0, -d-1-0} }(x,\xi) |^2 $ and then apply a change of variable formula.
\eqref{ergodicityu4} comes from taking $ \text{det} (D F_R)^{-1}(F_R(x)) \, |p_{\widetilde{ ( X \circ  \mathcal{S}^{\kappa Q})}_{0, -d-1-0} } (x,\xi) |^{2}$ with $X$ being one of the constant coordinate vectors $\{ e_k \}_{k=1}^d$, and then summed over all symbols resulting from $X = e_k$.
Their $v$ counterparts \eqref{ergodicityv1}-\eqref{ergodicityv4} are obtained similary with a specific choice $Q = 1$.
The proof is complete.
\end{proof}

By using the above results, we can derive the following theorem which indicates that there are ``many" generalised transmission eigenfunctions which are localized around $\partial D$.

\begin{Theorem}  \label{thm:main2}
{\color{black} Let $ \varepsilon > 0$ be sufficiently small and $D$ be a 
$C^\infty$ domain.} Then, for any smooth closed surface $\Gamma_R\subset D$ {\color{black} such that $\mathrm{dist}(\Gamma_R, \partial D):=R\geq 0$} and any bump function $\gamma(x) \in \mathcal{C}^{\infty} (\Gamma_R)$, {\color{black} whenever $\kappa^{2}$ is not a Dirichlet eigenvalue of $D$,}  there exists $S(\kappa^{-1}) \subset J(\kappa^{-1}) $ with $\, \frac{\sum_{j \in S(\kappa^{-1}) } 1 }{ \sum_{j \in J(\kappa^{-1}) } 1 } \sim 1 $ such that {\color{black} whenever $j\in S(\kappa^{-1}) $}, we have, as $\kappa\rightarrow \infty$:
\beqnx
 \int_{\Gamma_R} \gamma(x) \left  | u_j(\kappa^{-1}) (x)  \right|^2 d \sigma(x)  &=& \mathcal{O}(  \kappa^{-2}  ) \hspace*{2.25cm}  \quad \mbox{if}\ \ \mathrm{supp}(\gamma) \subset \partial D,  \\
 \int_{\Gamma_R} \gamma(x) \left  | u_j(\kappa^{-1}) (x)  \right|^2 d \sigma(x)  &=& \mathcal{O}(Q^{d-1} R^{d-3}  \kappa^{1-d} )  \qquad \mbox{if}\ \ \mathrm{supp}(\gamma) \cap \partial D = \emptyset, \\
 \int_{\Gamma_R} \gamma(x) \left  |  v_j(\kappa^{-1}) (x)  \right|^2 d \sigma(x) &=& \mathcal{O}(\kappa^{-2} )  \hspace*{2.3cm}\quad \mbox{if}\ \ \mathrm{supp}(\gamma) \subset \partial D,   \\
\int_{\Gamma_R} \gamma(x) \left  | v_j(\kappa^{-1}) (x)  \right|^2 d \sigma(x)  &=& \mathcal{O}( R^{d-3} \kappa^{1-d})  \hspace*{1.3cm}\quad \mbox{if}\ \ \mathrm{supp}(\gamma) \cap \partial D = \emptyset,
\eqnx
and
\beqnx
 \int_{\Gamma_R} \gamma(x) \left  | \nabla u_j(\kappa^{-1}) (x)  \right|^2 d \sigma(x)  &=& \mathcal{O}(Q^{d-1} R^{d-1} \kappa^{1-d} )   \qquad \mbox{if}\ \ \mathrm{supp} (\gamma) \cap \partial D = \emptyset, \\
 \int_{\Gamma_R} \gamma(x) \left  | \nabla v_j(\kappa^{-1}) (x)  \right|^2 d \sigma(x) &=& \mathcal{O}( R^{d-1} \kappa^{1-d}) \hspace*{1.25cm}\quad \mbox{if}\ \ \mathrm{supp}(\gamma) \cap \partial D = \emptyset.
\eqnx
{\color{black}
Moreover, supposing $ \mathrm{supp}(\gamma) \subset \partial D$ and writing $\zeta = u,v$, there exist $C>0 $ and $\kappa_C >0 $ such that, for all $\kappa > \kappa_C$, there is another subsequence $\widetilde{S_\zeta (\kappa^{-1})} \subset J(\kappa^{-1})$ with the property that
{\color{black} whenever $j\in \widetilde{S_\zeta(\kappa^{-1})} $}, we have
\beqnx
\left| \int_{\Gamma_R} \gamma(x) \left  | \nabla \zeta_j(\kappa^{-1}) (x)  \right|^2 d \sigma(x) \right| &\geq&C  \,.
\eqnx
}In all of the above relations, the asymptotic constants in the RHS terms depend on $\varepsilon, \|\gamma\|_{C(\Gamma)}$.
\end{Theorem}

\begin{proof}

\noindent From the fact that
\[
 | \left \langle  \mathrm{Op}_{\overline{a}, h}   \,  \phi_i(h) ,  \phi_i(h) \right \rangle_{L^2(\partial D)}  | \leq \| \mathrm{Op}_{\overline{a}, h}\|_{\mathcal{L}(L^2(\partial D), L^2(\partial D))} \| \phi_i(h) \|^2_{L^2(\partial D)} \leq C_{a} \,,
\]
(bearing in mind that $ \| \phi_i(h) \|^2_{L^2} = 1$)
together with Lemma \ref{list_of_result}, we can arrive at the first $6$ conclusions of the theorem.  As an example, recalling $\kappa = h^{-1}$, we show directly from \label{ergodicityu1} that
\beqnx
&  & \max_{j \in S(\kappa^{-1})} \bigg| \int_{\partial D} \gamma(x) \left  |  u_j(\kappa^{f-1}) (x)  \right|^2 d \sigma(x) \bigg| \\
&\leq&  \max_{j \in S(\kappa^{-1})} | \kappa^{-2}  \left \langle  \mathrm{Op}_{\overline{a_1}, h}   \,  \phi_j(h) ,  \phi_j(h) \right \rangle_{L^2(\partial D)} | \\ &  & \quad +
\max_{j \in S(\kappa^{-1})} \bigg| \int_{\partial D} \gamma(x) \left  | u_j(\kappa^{-1}) (x)  \right|^2 d \sigma(x) -  \kappa^{-2}  \left \langle  \mathrm{Op}_{\overline{a_1}, h}   \,  \phi_j(h) ,  \phi_j(h) \right \rangle_{L^2(\partial D)} \bigg|  \\
&\leq& C_{a_1} \,  \kappa^{-2}  + o_{\varepsilon}(  {\kappa^{-2} } ) \\
&=&  \mathcal{O}(  \kappa^{-2}  )  .
\eqnx
The other five conclusions can be obtained in a similar manner.

The last two conclusions come from applying a pigeonhole principle to the sums \eqref{haha4} with $\zeta = u,v$ respectively.
{\color{black}
In fact, given $\mathrm{supp}(\gamma) \subset \partial D$, if we suppose otherwise that for all $C>0$ and $\kappa_C>0$, there exists $\kappa > \kappa_C$ with 
\beqnx
\max_{j \in S(\kappa^{-1})}  \left| \int_{\Gamma_R} \gamma(x) \left  | \nabla \zeta_j(\kappa^{-1}) (x)  \right|^2 d \sigma(x) \right| < C \,,
\eqnx
then by choosing $C := l^{-1}$ and $\kappa_C := \max \{ \kappa_{l-1}, l \}$ for $l \in \mathbb{N}$, we may iteratively find $\kappa_{l} > \max \{ \kappa_{l-1}, l \}$ such that 
\beqnx
\max_{j \in S(\kappa_l^{-1})}  \left| \int_{\Gamma_R} \gamma(x) \left  | \nabla \zeta_j(\kappa_l^{-1}) (x)  \right|^2 d \sigma(x) \right| < l^{-1} \,.
\eqnx
Therefore, we can create a sequence $\{ \kappa_l \}_{l=0}^\infty$ such that as $l \rightarrow \infty$ that
\beqnx
 \kappa_l \rightarrow \infty \quad 
\text{ and } \quad
\max_{j \in S(\kappa_l^{-1})}  \left| \int_{\Gamma_R} \gamma(x) \left  | \nabla \zeta_j(\kappa_l^{-1}) (x)  \right|^2 d \sigma(x) \right| \rightarrow 0 \,.
\eqnx
With this, one can directly verify that for such $\{ h_l  \}_{l=0}^\infty := \{ \kappa_l^{-1} \}_{l=0}^\infty$, we have as $l \rightarrow \infty$ that
\[
h_l \rightarrow 0 \quad 
\text{ and } \quad
\left| \frac{ \sum \limits_{1-\varepsilon \leq  \lambda_j (h_l) \leq 1}   \int_{\partial D} \gamma(x)  |  [\zeta_j(h_l)] (x) |^2 d \sigma(x) }{   \# \{ 1-\varepsilon \leq  \lambda_j (h_l) \leq 1 \} } \right|  \rightarrow 0 \,,
\]
and this contradicts to \eqref{haha4}, and therefore the result follows.
}

The proof is complete.
\end{proof}

%

\section*{Acknowledgment}
The work of Y. Deng was supported by NSF grant of China No. 11971487 and NSF grant of Hunan No. 2020JJ2038. The work of H Liu was supported by Hong Kong RGC General Research Funds (project numbers, 11300821, 12301420 and 12302919) and the NSFC/RGC Joint Research Fund, N\_CityU101/21. { \color{black} The authors are grateful to the very helpful discussion with K.L. Lee, W.T. Leung and the two anonymous referees for their tremendously helpful suggestions. }

\section*{Data Availability Statement}

This is a theoretical analysis work and has no associated data.

\end{document}